\documentclass[12pt]{amsart}
\usepackage{amsmath,amsthm,amssymb,amsfonts}
\textwidth=15.5cm \textheight=22.5cm \hoffset=-15mm \voffset=-10mm
\usepackage{latexsym}



\theoremstyle{definition}
\newtheorem{defn}{Definition}[section]
\theoremstyle{plain}
\newtheorem{rem}[defn]{Remark}
\theoremstyle{definition}

\theoremstyle{plain}
\newtheorem{lem}[defn]{Lemma}
\newtheorem{thm}[defn]{Theorem}

\theoremstyle{plain}
\newtheorem{prop}[defn]{Proposition}
\theoremstyle{plain}
\newtheorem{cor}[defn]{Corollary}

\newcommand{\inner}[1]{\left\langle #1 \right\rangle}

\renewcommand{\leq}{\leqslant}
\renewcommand{\geq}{\geqslant}

\numberwithin{equation}{section}

\begin{document}
\title{Norm and Numerical Peak Holomorphic Functions on Banach Spaces}
\date{}
\author[S.~G.~Kim]{Sung Guen Kim}
\author[H.~J.~Lee]{Han Ju Lee}

\thanks{The second named author was supported by the Korea Research
Foundation Grant funded by the Korean Government(MOEHRD)
(KRF-2006-352-C00003)}

\baselineskip=.6cm

\begin{abstract}
We introduce the notion of numerical (strong) peak function and
investigate the denseness of the norm and numerical peak functions
on complex Banach spaces. Let $A_b(B_X:X)$ be the Banach space of
all bounded continuous functions $f$ on the unit ball  $B_X$ of a
Banach space $X$ and their restrictions $f|_{B_X^\circ}$ to the open
unit ball are holomorphic. In finite dimensional spaces, we show
that the intersection of the set of all norm peak functions and the
set of all numerical peak functions  is a dense $G_\delta$ subset of
$A_b(B_X:X)$. We also prove that if $X$ is a smooth Banach space
with the Radon-Nikod\'ym property, then the set of all numerical
strong peak functions is dense in $A_b(B_X:X)$. In particular, when
$X=L_p(\mu)$ $(1<p<\infty)$ or $X=\ell_1$, it is shown that the
intersection of the set of all norm strong peak functions and the
set of all numerical strong peak functions is a dense $G_\delta$
subset of $A_b(B_X:X)$.

In the meanwhile, we study the properties of the numerical radius of
an holomorphic function and the numerical index of subspaces of
$A_b(B_X:X)$.  As an application, the existence and properties of
numerical boundary of $A_b(B_X:X)$ are studied. Finally, the
numerical peak function in $A_b(B_X:X)$ is characterized when
$X=\ell_\infty^n$ and some negative results on the denseness of
numerical (strong) peak holomorphic functions are given.
\end{abstract}

\maketitle

\section{Introduction and Preliminaries}

In this paper, we consider only  complex Banach spaces. Given a
Banach space $X$, we denote by $B_X$ and $S_X$ its closed unit ball
and unit sphere, respectively. Let  $X^*$ be the dual space of $X$.
If $X$ and $Y$ are Banach spaces, an {\it $N$-homogeneous
polynomial} $P$ from $X$ to $Y$ is a mapping such that there is an
$N$-linear (bounded) mapping $L$ from $X$ to $Y$ such that $P(x) =
L(x, \dots, x)$ for every  $x$ in $X$. ${\mathcal P}(^N X:Y)$ denote
the Banach space of all $N$-homogeneous polynomials from $X$ to $Y$,
endowed with the polynomial norm $\|P\|=\sup_{x \in B_X}{\|P(x)\|}$.
A mapping $Q:X\to Y$ is a {\it polynomial} if there exist $m$ and
$P_k \in {\mathcal P}(^k X:Y),~k=0, 1, \ldots, m$ such that
$Q=P_0+P_1+\cdots+P_m$. If $P_m\neq 0$, then we say that $Q$ is a
{\it polynomial of degree $m$.} We denote ${\mathcal P}(X:Y)$ the
normed space of all polynomials from $X$ to $Y$, endowed with the
norm $\|Q\|=\sup_{x \in B_X}{\|Q(x)\|}$. We refer to \cite{D} for
background on polynomials. We are mainly interested in the following
spaces. For two Banach spaces $X$, $Y$ and a Hausdorff topological
space $K$,
\begin{eqnarray*}
C_b(K:Y) &:=& \{ f:K\to Y\; :\; f \mbox{ is a bounded continuous
function on } K\},\\
A_b(B_X: Y) &:=& \{ f\in C_b(B_X: Y):
f\mbox{ is holomorphic on }~{B_X^\circ}\}\\
A_u(B_X: Y) &:=& \{ f\in A_b(B_X: Y): f \mbox{ is uniformly
continuous}\},
\end{eqnarray*} where $B_X^\circ$ is the interior of $B_X$. Then
$C_b(K: Y)$ is a Banach space under the sup norm $\|f\|:=\sup\{
\|f(t)\|_Y : t\in K\}$ and both $A_b(B_X:Y)$ and $A_u(B_X:Y)$ are
closed subspaces of $C_b(B_X: Y)$. In case that $Y$ is the complex
scalar field $\mathbb{C}$, we just write $C_b(B_X)$, $A_b(B_X)$ and
$A_u(B_X)$. The closed subspace of $A_u(B_X: Y)$ consisting of all
weakly uniformly continuous functions is denoted by $A_{wu}(B_X:
Y)$. We denote by $A(B_X:X)$ one of $A_b(B_X:X)$, $A_u(B_X:X)$ and
$A_{wu}(B_X:X)$. Notice that if $X$ is finite dimensional,
$A_b(B_X:X)=A_u(B_X:X)=A_{wu}(B_X:X)$.

We denote by $\tau$ the product topology of the set $S_X\times
S_{X^*}$, where the topologies on $S_X$ and $S_{X^*}$ are the norm
topology of $X$ and the weak-$*$ topology of $X^*$, respectively.
The set $\Pi(X): = \{ (x, x^*)~ : ~ \|x\|=\|x^*\|=1=x^*(x)\}$ is a
$\tau$-closed subset of $S_X\times S_{X^*}$. The {\it spatial
numerical range} of $f$ in $C_b(B_X:X)$ is defined by $W(f) =
\{x^*(f(x)) : (x, x^*)\in \Pi(X)\},$ and the {\it numerical radius}
of $f$ is defined by $ v(f) = \sup\{|\lambda| : \lambda\in W(f)\}.$

Let $f$ be an element of $C_b(K:X)$. We say that $f$ {\it attains
its norm} if there is some $t\in K$ such that $\|f\|= \|f(t)\|_X$.
$f$ is said to be a {\it (norm) peak function} at $t$ if there
exists a unique $t\in K$ such that $\|f\|=\|f(t)\|_X$. It is clear
that every (norm) peak function in $C_b(K: X)$ is norm attaining. A
peak function $f$ at $t$ is said to be a {\it (norm) strong peak
function} if whenever there is a sequence $\{t_k\}_{k=1}^\infty$ in
$K$ with $\lim_k \|f(t_k)\|_X = \|f\|$, $\{t_k\}_{k=1}^\infty$
converges to $t$. It is easy to see that if $K$ is compact, then
every peak function is a strong peak function. Given a subspace $H$
of $C_b(K)$, we denote by $\rho H$ the set of all points $t\in K$
such that there is a strong peak function $f$ in $H$ with
$\|f\|=|f(t)|$.

Similarly we introduce the notion of numerical peak functions. Let
$f$ be an element of $C_b(B_X:X)$, where $X$ is a Banach space. If
there is some $(x, x^*)$ in $\Pi(X)$ such that $v(f) = |x^*(f(x))|$,
we say that $f$ {\it attains its numerical radius}. $f$ is said to
be a {\em numerical peak function} at $(x, x^*)$ if there exist a
unique $(x, x^*)\in \Pi(X)$ such that $v(f)= |x^*(f(x))|$. In this
case, $(x, x^*)$ is said to be the {\em numerical peak point} of
$f$. The numerical peak function $f$ at $(x, x^*)$ is called a {\it
numerical strong peak function} if whenever there is a sequence
$\{(x_k, x_k^*)\}_{k=1}^\infty$ in $\Pi(X)$ such that $\lim_k
|x^*_k(f(x_k))| = v(f)$, then $\{(x_k, x_k^*)\}_{k=1}^\infty$
converges to $(x, x^*)$ in $\tau$-topology. In this case, $(x, x^*)$
is said to be the {\em numerical strong peak point} of $f$. We say
that a numerical strong peak function $f$ at $(x, x^*)$ is said to
be a {\it very strong numerical peak function} if whenever there is
a sequence $\{(x_k, x_k^*)\}_{k=1}^\infty$ in $\Pi(X)$ satisfying
$\lim_n |x^*_k(f(x_k))|= v(f)$, we get $\lim_k x_k=x$ and $\lim_k
x_k^* =x^*$ in the norm topology.  If $X$ is finite dimensional,
then every numerical peak function is a very strong numerical peak
function.

In 1996, Y.S. Choi and the first named author \cite{CK} initiated
the study of denseness of norm or numerical radius attaining
nonlinear functions, especially, {\em homogeneous polynomials} on a
Banach space. Using the perturbed optimization theorem of Bourgain
\cite{Bo} and Stegall \cite{S}, they proved that if $X$ has the
Radon-Nikod\'ym property, then the set of all norm attaining
functions in ${\mathcal P}(^kX)$ is norm-dense. Concerning the
numerical radius, it was also shown that if $X$ has the
Radon-Nikod\'ym property, then the set of all numerical radius
attaining functions in ${\mathcal P}(^kX:X)$ is norm-dense. M. D.
Acosta, J. Alaminos, D. Garc\'ia and M. Maestre \cite{AAGM} proved
that if $X$ has the Radon-Nikod\'ym property, then the set of all
norm attaining functions in ${A}_{b}(B_X)$ is norm-dense. Recently,
it was shown \cite{CLS} that if $X$ has the Radon-Nikod\'ym
property, the set of all (norm) strong peak functions in $A_b(B_X)$
is dense. Concerning the numerical radius, M. D. Acosta and the
first named author \cite{AK} showed that the set of all numerical
radius attaining functions in $A_b(B_X: X)$ is dense if $X$ has the
Radon-Nikod\'ym property. In this paper, we extend the results of
the above (\cite{CK}, \cite{AAGM}, \cite{CLS}, \cite{AK}) to the
denseness of norm or numerical (strong) peak functions in $A(B_X:
X)$ if $X$ has the Radon-Nikod\'ym property.

Let's briefly sketch the content of this paper. In section 2, we
show that if $X$ is a finite dimensional Banach space, then the set
of all norm and numerical strong peak functions in $A(B_X: X)$ is a
dense $G_\delta$-subset of $A(B_X: X)$. For the extension from the
finite dimensional space to the infinite dimensional space by
approximation, we introduce the following notions. A Banach space
$X$ has $(FPA)$-property with $\{\pi_i, F_i\}_{i\in I}$ if

\begin{enumerate}
\item each $\pi_i$ is a norm-one projection with finite dimensional
range $F_i$,

\item given $\epsilon>0$, for every finite-rank operator $T:X\to F$
for some Banach space $F$ and for every finite dimensional subspace
$G$ of $X$, there is $\pi_i$ such that
\[\|T-T\pi_i\|\le \epsilon,  \ \ \ \  \|I_G - \pi_i|_G\|\le
\epsilon.\] \end{enumerate} As examples, we show that $X$ has
$(FPA)$-property if at least one of the following conditions are
satisfied:

(a) It has a shrinking and monotone finite-dimensional
decomposition.

(b) $X=L_p(\mu)$, where $\mu$ is a finite measure and $1\le p<
\infty$.

We show that if $X$ has $(FPA)$-property, then the set of all
polynomials $Q\in {\mathcal P}(X:X)$ such that there exist a finite
dimensional subspace $F$ and norm-one projection $\pi:X\to F$ such
that $\pi \circ Q\circ \pi = Q$ and $Q|_F$ is a norm and numerical
peak function as a mapping from $B_F$ to $F$ is dense in
$A_{wu}(B_X:X)$.

A subset $\Gamma$ of $\Pi(X)$ is called a {\it numerical boundary}
for a subspace $H$ of $C_b(B_X: X)$ if $ v(f) = \sup\{ |x^*(f(x))|:
(x, x^*)\in \Gamma\}$ for every $f$ in $H$. The projections
$\{\pi_i, F_i\}_{i\in I}$ are said to be {\it parallel} to a
numerical boundary $\Gamma$ of $H$ if each $\pi_i$ has the image
$F_i$ and
\[|\inner{x^*|_{F_i}, \pi_i(x)}| = \|x^*|_{F_i}\|\cdot \|\pi_i(x)\|,\ \ \ \ \forall (x, x^*)\in
\Gamma, \ \ \forall i\in I.\] A projection $\pi:X\to X$ is said to
be {\it strong} if whenever
 $\{\pi(x_k)\}_{k=1}^{\infty}$ is norm-convergent to $y\in S_X$ for
some $\{x_k\}_{k=1}^{\infty}$ in $B_X$, $\{x_k\}_{k=1}^{\infty}$ is
norm-convergent to $y$.

Recall that a Banach space $X$ is said to be {\it locally uniformly
convex} if $x\in S_X$ and there is a sequence $\{x_n\}$ in $B_X$
satisfying $\lim_n \|x_n + x\|=2$, then $\lim_n \|x_n - x\|=0$.
Notice that if $X$ is locally uniformly convex, then every norm-one
projection is strong. We prove that if a smooth Banach space $X$ has
$(FPA)$-property and the corresponding projections are strong and
parallel to $\Pi(X)$, then the set of all norm and numerical strong
peak functions in $A_{wu}(B_X:X)$ is dense. We also prove that if a
Banach space $X$ has $(FPA)$-property with $\{(\pi_i, F_i)\}_{i\in
I}$, the corresponding projections are strong, parallel to $\Pi(X)$,
and $\pi^*_i:X^* \to X^*$'s are also strong, then the set of all
very strong numerical and norm strong peak functions is dense in
$A_{wu}(B_X: X)$.

Let $K$ be a convex subset of a Banach space $X$. An element $x$ in
$K$ is said to be a {\it strongly exposed point} of $K$ if there is
nonzero $x^*\in B_{X^*}$ such that ${\rm Re\ }x^*(x)= \sup \{ {\rm
Re\ }x^*(y):y\in K\}$ and whenever $\lim_n {\rm Re\ } x^*(x_n)= {\rm
Re\ }x^*(x)$ for some sequence $\{x_n\}_{n=1}^\infty$ in $K$, we get
$\lim_n \|x_n - x\|=0$. A Banach space $X$ is said to have the {\it
Radon-Nikod\'ym property} if every nonempty bounded closed convex
subset in $X$ is a closed convex hull of its strongly exposed points
\cite{DU}. The point $x\in B_X$ is said to be a {\it smooth point}
if there is a unique $x^*\in B_{X^*}$ such that ${\rm Re} x^*(x)=1$.
We denote by ${\rm sm} (B_X)$ the set of all smooth points of $B_X$.
We say that a Banach space is {\it smooth} if ${\rm sm}(B_X)$ is the
unit sphere $S_X$.

When $X$ is a smooth Banach space with the Radon-Nikod\'ym property,
it is shown that the set of all numerical strong peak functions is
dense in $A(B_X: X)$. In particular, if $X$ is a Banach space with
the Radon-Nikod\'ym property and $X^*$ is locally uniformly convex,
then the set of all norm and numerical strong peak functions in
$A(B_X: X)$ is a dense $G_\delta$-subset of $A(B_X: X)$. As a
corollary, if $1<p<\infty$ and $X= L_p(\mu)$ for a measure space
$\mu$, then the set of all norm and numerical strong peak functions
in $A(B_X: X)$ is a dense $G_\delta$-subset of $A(B_X:X)$. In this
case, every numerical strong peak function is a very strong
numerical peak function. We also prove that the set of all norm and
numerical strong peak functions in $A(B_{l_1}: l_1)$ is a dense
$G_\delta$-subset of $A(B_{l_1}: l_1)$.

Concerning the numerical index of subspaces of $A_b(B_X:X)$, we
extend the recent result of E. Ed-dari \cite{Ed}. Although it is not
directly related to the denseness of numerical peak holomorphic
functions, it is a byproduct of the study. Let $H$ be a subspace of
$A_b(B_X:X)$. We introduce the ($H$-) numerical index by $N(H) =
\inf \{ v(f): f\in H, \|f\|=1\}$. When $H=\mathcal{P}({\ }^k X:X)$
for some $k\ge 1$, the polynomial numerical index $N(H)$ is usually
denoted by $n^{(k)}(X)$ (see \cite{CGKM}).

For norm-one projection $\pi$ with range $F$ and for any subspace
$H$ of $A_b(B_X: X)$, define $H_F = \{ \pi\circ f\circ \pi
|_F:B_F\to F: f\in H\}.$ We prove that if $X$  has $(FPA)$-property
with $\{(\pi_i, F_i)\}_{i\in I}$ and the corresponding projections
are parallel to a numerical boundary of a subspace $H$, then $N(H)=
\inf_{i\in I} N(H_{F_i})$. In fact, $N(H)$ is a decreasing limit of
the right-hand side  with respect to the inclusion partial order. As
a corollary we also extended Ed-dari's result to the polynomial
numerical indices of $l_p$. In fact, the first named author
\cite{K2} extended Ed-dari's result(\cite{Ed}, Theorem 2.1) to the
polynomial numerical indices of (real or complex) $l_p$ of order $k$
as follows: Let $1<p<\infty$ and $k\in \mathbb{N}$ be fixed. Then
$n^{(k)}(l_p)=\inf\{n^{(k)}(l_p^m): m \in \mathbb{N}\}$ and the
sequence $\{n^{(k)}(l_p^m)\}_{m \in \mathbb{N}}$ is decreasing.

In section 3, we give some applications of the denseness of
numerical strong peak holomorphic functions. More precisely, we show
that if the set of numerical strong peak functions are dense in a
subspace $A$ of $C_b(B_X:X)$ then the numerical Shilov boundary of
$A$ exists and it is the $\tau$-closure of the set of all numerical
strong peak points. On the other hand, using the Lindensrauss method
\cite{Li}, we show that if there is a numerical boundary $\Gamma$ of
$A(B_X:X)$ such that the first component of every element in
$\Gamma$ is a strong peak point of $A(B_X)$, then the set of all
numerical radius attaining elements in $A(B_X:X)$ is dense. As
corollaries, it is shown that if $X$ is either a locally uniformly
convex Banach space or an order continuous sequence space with local
uniform $c$-convexity, then the set of all numerical radius
attaining elements is dense in $A(B_X:X)$. Recently, the second
named author shows \cite{Lee5} that if $\Pi(X)$ is metrizable and
the set $\Gamma= \{ (x, x^*)\in \Pi(X) : x\in \rho {A}(B_X)\cap {\rm
sm}(B_X)\}$ is a numerical boundary of $A(B_X:X)$, then the set of
all numerical strong peak functions is dense in $A(B_X:X)$.

In section 4,  we characterize the numerical peak function in
$A_b(B_X:X)$ when $X=\ell_\infty^n$. More precisely, setting
$X=\ell_\infty^n$, an element $f$ in $A_b(B_X:X)$ is a numerical
peak function in $A_b(B_X:X)$ if and only if there exist unique
$x_0\in extB_{X}$ and $1\le m_0\le n$ such that
\begin{enumerate}
\item[(a)] $v(f)=\|f\|=\|f(x_0)\|>\|f(x)\|$ for every $x\in B_{X}$ with
$x\neq x_0$;

\item[(b)] $v(f)=\|f\|=|f(x_0)(m_0)|>|f(x_0)(m)|$ for every $1\le m\le
n$ with $m\neq m_0$.
\end{enumerate}
For negative results for denseness of numerical peak holomorphic
functions on a classical Banach space, we prove the following:
\begin{enumerate}
\item[(1)] Let $X$ be a complex Banach space having $(\beta)$-and $
{\bf Q}$-properties with $\rho=0$. There are no numerical peak
functions in $A_b(B_{X}:X)$.

\item[(2)] Let $\Omega$ be a locally compact Hausdorff space with more
than 2 elements. Let $X=C_0(\Omega)$. Then there are no numerical
peak functions in $A_b(B_X:X)$.

\item[(3)] Let $K$ be an infinite compact Hausdorff space. Then there
are no numerical strong peak functions in $A_b(B_{C(K)}:C(K))$.
Neither are there in $A_{wu}(B_{L_1[0,1]}: L_1[0,1])$.
\end{enumerate}

\section{Denseness of Numerical Peak Holomorphic Functions}

Let $K$ be a Hausdorff space and $Y$ be a complex Banach space.
Consider the product space  $K\times B_{Y^*}$ where $B_{Y^*}$ is
equipped with the weak-$*$ topology. Given a subspace $A$ of
$C_b(K:Y)$, consider the map $\varphi: f\in A \mapsto \tilde{f}\in
C_b(K\times B_{Y^*})$ defined by \[\tilde{f}(x, y^*) = y^*(f(x)), \
\ \forall (x, y^*)\in K\times B_{Y^*}.\] Then $\varphi$ is a linear
isometry, and its image $\tilde{A}$ of $A$ is also a subspace of
$C_b(K\times B_{Y^*})$.  We say that the subspace $A$ of $C_b(K: Y)$
is {\it separating} if the following conditions hold:

\begin{enumerate}
\item[(i)] If $x\neq y$ in $K$, then $\delta_{(x,x^*)} \neq
\delta_{(y,y^*)}$ on $\tilde{A}$ for every $x^*, y^* \in S_{Y^*}$.

\item[(ii)] Given $x\in K$ with $\delta_x \neq 0$ on $A$, we have
$\delta_{(x,x^*)} \neq \delta_{(x,y^*)}$ on $\tilde{A}$ for every
$x^*\neq y^*$ in $ext(B_{Y^*})$,
\end{enumerate}
where $\delta_t$ (for some $t\in K$) is a linear map from $C_b(K:Y)$
defined by $\delta_t(f) = f(t)$.

We need the theorem in \cite{CLS}.

\begin{thm}\cite{CLS}\label{thm:cls}
Let $Y$ be a Banach space and let $A$ be a nontrivial separating
separable subspace of $C(K: Y)$ on a compact Hausdorff space $K$.
Then the set $\{ f\in A:f \mbox{ is a peak function at some } t\in
K, f(t)/\|f\| \in {\rm sm}(B_Y) \}$ is a dense $G_\delta$-subset of
$A$.
\end{thm}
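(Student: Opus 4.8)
The plan is to reduce the statement to a Gateaux‑differentiability fact for the norm of $A$, via the isometric embedding $\varphi$ introduced above. Since $K$ is compact and $B_{Y^*}$ is weak‑$*$ compact, $L:=K\times B_{Y^*}$ is compact Hausdorff and $\varphi$ identifies $A$ with a separable subspace $\tilde A$ of $C(L)$. Transporting the evaluations, I would work with
\[
C:=\{\phi_{(x,y^*)}:(x,y^*)\in L\}\subseteq B_{A^*},\qquad \phi_{(x,y^*)}(f):=y^*(f(x)).
\]
As $(x,y^*)\mapsto\phi_{(x,y^*)}$ is weak‑$*$ continuous, $C$ is weak‑$*$ compact; it is norming, $\sup_{\phi\in C}|\phi(f)|=\|f\|$, and invariant under multiplication by unimodular scalars, so $\|f\|=\sup_{\phi\in C}\RE\phi(f)$ for all $f\in A$. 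For $f\in A$ put $J(f):=\{\phi\in C:\RE\phi(f)=\|f\|\}$; this is a nonempty (the supremum is attained, $C$ being weak‑$*$ compact) weak‑$*$ compact set, namely the part of the subdifferential $\partial\|\cdot\|(f)$ lying in $C$.

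The heart of the argument—and the only place the separating hypothesis enters—is the equivalence: for $f\in A$, \emph{$f$ is a peak function at some $t$ with $f(t)/\|f\|\in{\rm sm}(B_Y)$ if and only if $J(f)$ is a singleton}. For the nontrivial implication, suppose $J(f)=\{\phi_0\}$. Every representation $\phi_0=\phi_{(x,u^*)}$ satisfies $\RE u^*(f(x))=\|f\|$, which forces $\|u^*\|=1$ and $u^*(f(x))=\|f\|$; if two representations $\phi_0=\phi_{(x,u^*)}=\phi_{(y,v^*)}$ had $x\ne y$, condition (i) of \emph{separating}, applied to the unit functionals $u^*,v^*$, would be violated. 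Hence all representations share one first coordinate $x_0$, with $f(x_0)\ne0$, and the corresponding second coordinates form $N_{x_0}:=\{w^*\in B_{Y^*}:w^*(f(x_0))=\|f\|\}$, a weak‑$*$ compact convex face of $B_{Y^*}$. Were $N_{x_0}$ not a single point it would possess two distinct extreme points $w_1^*,w_2^*$, necessarily extreme in $B_{Y^*}$, and then $\phi_{(x_0,w_1^*)},\phi_{(x_0,w_2^*)}\in J(f)$ would have to coincide, contradicting condition (ii) (which applies since $\delta_{x_0}\ne0$ on $A$). Thus $N_{x_0}$ is a single point, which says exactly that $f(x_0)/\|f\|$ is a smooth point of $B_Y$; moreover $\|f(x_0)\|=\|f\|$, and $f$ is a peak function at $x_0$ because a Hahn--Banach functional at any $x\ne x_0$ with $\|f(x)\|=\|f\|$ would produce an element of $J(f)$ with first coordinate $x$, contradicting (i) once more. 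Reversing these steps gives the converse. I expect this lemma to be the main obstacle: condition (ii) distinguishes only \emph{extreme} points of $B_{Y^*}$, so before it can be applied one must first exhibit extreme second coordinates via the Krein--Milman theorem on the face $N_{x_0}$, and one must keep track throughout of the unimodular ambiguity between the conditions $\RE\phi(f)=\|f\|$ and $|\phi(f)|=\|f\|$.

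Granting the equivalence, denseness is immediate from Mazur's theorem: since $A$ is separable, its norm is Gateaux differentiable on a dense $G_\delta$ subset of $A$, and at any such nonzero $f$ the subdifferential $\partial\|\cdot\|(f)$, hence also $J(f)$, is a singleton, so $f$ lies in the set $\mathcal G$ of the statement. Finally $\mathcal G$ is a $G_\delta$: as $A$ is separable the weak‑$*$ topology on $C$ is metrizable, and fixing a compatible metric $d$ one verifies that $\mathcal G=\bigcap_{k\ge1}W_k$, where
\[
W_k:=\Big\{f\in A\setminus\{0\}:\exists\,\phi_0\in C,\ \RE\phi_0(f)>\|f\|-\tfrac1k,\ \ \mathrm{diam}_d\{\phi\in C:\RE\phi(f)\ge\|f\|-\tfrac1k\}<\tfrac1k\Big\}.
\]
Each $W_k$ is open by a routine perturbation estimate together with the compactness of $C$, and the identity holds because the closed sets $\{\phi\in C:\RE\phi(f)\ge\|f\|-\tfrac1k\}$ decrease to $J(f)$, so their $d$‑diameters shrink to $0$ exactly when $J(f)$ is a single point. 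Combining the three steps, $\mathcal G$ is a dense $G_\delta$ subset of $A$.
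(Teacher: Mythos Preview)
The paper does not prove Theorem~\ref{thm:cls}; it is quoted from \cite{CLS} without argument, so there is no ``paper's own proof'' to compare against. Your approach---embedding via $\varphi$, identifying the desired set with the locus where the face $J(f)$ is a singleton, and invoking Mazur's Gateaux differentiability theorem---is a correct and natural strategy, and your treatment of the key equivalence (in particular the use of Krein--Milman to locate extreme points of the face $N_{x_0}$ before applying the separating condition~(ii)) is accurate.

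There is, however, a genuine slip in your $G_\delta$ argument. The identity $\mathcal G=\bigcap_{k\ge1}W_k$ fails in the direction $\subseteq$: if $J(f)$ is a singleton then the diameters of $J_k(f):=\{\phi\in C:\RE\phi(f)\ge\|f\|-1/k\}$ tend to $0$, but nothing forces $\mathrm{diam}_d J_k(f)<1/k$ for \emph{every} $k$ (for small $k$ the set $J_k(f)$ may well be all of $C$). The fix is standard and appears later in the paper as Proposition~\ref{prop:Gdelta}: take a double index and set
\[
S_N:=\bigcup_{n\ge1}\bigl\{f\in A\setminus\{0\}:\mathrm{diam}_d J_n(f)<\tfrac1N\bigr\},
\]
so that $\mathcal G=\bigcap_{N\ge1}S_N$, and verify each $S_N$ is open by the perturbation estimate $J_{3n}(g)\subseteq J_n(f)$ whenever $\|g-f\|\le 1/(3n)$. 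Alternatively---and more in the spirit of your own argument---you can avoid this computation entirely: since $C$ is balanced, weak-$*$ compact and norming, Milman's converse to Krein--Milman gives $\ext(B_{A^*})\subseteq C$, so the face $\partial\|\cdot\|(f)$ has all its extreme points in $J(f)$; hence $J(f)$ is a singleton iff $\partial\|\cdot\|(f)$ is, i.e.\ iff the norm is Gateaux differentiable at $f$. Thus $\mathcal G$ coincides (off $0$) with the Gateaux differentiability set, which Mazur already delivers as a dense $G_\delta$.
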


\begin{prop}\label{prop:densefinitenumerical}Let $X$ be a finite dimensional Banach space.
Suppose that a subspace $H$ of $C_b(B_X: X)$ contains the functions
of the form  \begin{equation}\label{eq:functionform} 1 \otimes x,\ \
\ \ \ y^*\otimes z, \ \ \ \ \forall x, z\in X,\ \ \ \forall y^*\in
X^*.\end{equation} If the numerical index $N(H)>0$, then the set of
all numerical peak functions in $H$ is a dense $G_\delta$-subset of
$H$.
\end{prop}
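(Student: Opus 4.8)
The plan is to transfer the problem to the scalar setting via the isometry $\varphi: f \mapsto \tilde f$ and then invoke Theorem \ref{thm:cls}. Since $X$ is finite dimensional, $K := \Pi(X)$ is a compact metric space (it is $\tau$-closed in $S_X \times S_{X^*}$, and in finite dimensions the weak-$*$ topology on $S_{X^*}$ coincides with the norm topology), and $H$ is separable because $C_b(B_X:X)$ is separable when $X$ is finite dimensional. Restricting $\tilde f$ to $\Pi(X)$ gives a map $\Psi: f \in H \mapsto \tilde f|_{\Pi(X)} \in C(\Pi(X))$; the key observation is that $\|\tilde f|_{\Pi(X)}\|_\infty = v(f)$, so $\Psi$ is a bounded linear map which is \emph{bounded below} precisely because $N(H) > 0$: indeed $\|\Psi(f)\|_\infty = v(f) \geq N(H)\|f\|$. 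Thus $\Psi$ is a linear isomorphism onto its image $\widetilde H := \Psi(H) \subseteq C(\Pi(X))$, and a function $f \in H$ is a numerical peak function (resp. numerical strong peak function) at $(x,x^*)$ exactly when $\Psi(f)$ is a peak function (resp. strong peak function) at $(x,x^*) \in \Pi(X)$. Because $\Pi(X)$ is compact, peak and strong peak coincide here.

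Next I would verify that $\widetilde H$ is a nontrivial separating separable subspace of $C(\Pi(X))$, so that Theorem \ref{thm:cls} applies with $Y = \mathbb C$ (the scalar case, where ${\rm sm}(B_{\mathbb C}) = S_{\mathbb C}$, so the smoothness condition is automatic). Separability is inherited from $H$ via the isomorphism $\Psi$; nontriviality follows since $H$ contains $1 \otimes x$ for $x \neq 0$. The separating condition is where the hypothesis \eqref{eq:functionform} does the work: given $(x,x^*) \neq (y,y^*)$ in $\Pi(X)$, either $x \neq y$, in which case picking $z^* \in X^*$ with $z^*(x) \neq z^*(y)$ and using $z^* \otimes x \in H$ produces $\widetilde{z^*\otimes x}(x,x^*) = z^*(x)\|x\|^2$-type values separating the two points (one uses $x^*(x) = 1$); or $x = y$ but $x^* \neq y^*$, in which case choosing $z \in X$ with $x^*(z) \neq y^*(z)$ and using $1 \otimes z \in H$ gives $\widetilde{1\otimes z}(x,x^*) = x^*(z) \neq y^*(z) = \widetilde{1\otimes z}(x,y^*)$. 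This handles both clauses (i) and (ii) of the separating definition directly on $\widetilde H$, bypassing the two-step construction through $\tilde A$.

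Having checked the hypotheses, Theorem \ref{thm:cls} gives that the set $P$ of peak functions in $\widetilde H$ is a dense $G_\delta$-subset of $\widetilde H$. Pulling back through the isomorphism $\Psi$ — which is a homeomorphism onto $\widetilde H$, hence sends dense $G_\delta$ sets to dense $G_\delta$ sets — the set $\Psi^{-1}(P)$ is a dense $G_\delta$-subset of $H$, and by the correspondence above it is exactly the set of numerical peak functions in $H$. The main obstacle, and the only genuinely delicate point, is the verification that $\Psi$ is bounded below: this is the single place where $N(H) > 0$ enters, and it is what makes $\widetilde H$ a \emph{closed} subspace of $C(\Pi(X))$ (so that the Baire category statement transfers cleanly) rather than merely a dense or non-closed subspace. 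Everything else — compactness of $\Pi(X)$, separability, the separating property, and the peak/strong-peak equivalence — is routine given the finite-dimensionality of $X$ and the form \eqref{eq:functionform} of the functions in $H$.
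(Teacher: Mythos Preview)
Your approach is exactly the paper's: renorm $H$ by $v(\cdot)$, embed isometrically into $C(\Pi(X))$, verify separating, and invoke Theorem~\ref{thm:cls} in the scalar case. The structural outline is correct.

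However, your verification of the separating property has a genuine gap. Recall that for a subspace $A\subset C(K:\mathbb{C})$, condition (i) of the separating definition requires that for $t_1\neq t_2$ in $K$ and \emph{every} pair $\alpha,\beta\in S_{\mathbb{C}}$, one has $\alpha\delta_{t_1}\neq\beta\delta_{t_2}$ on $A$; ordinary point separation is the special case $\alpha=\beta$. You only check the latter. Moreover, even for ordinary separation your case split is backwards: when $x\neq y$ you propose $g=z^*\otimes x$ with $z^*(x)\neq z^*(y)$, but then $\tilde g(y,y^*)=z^*(y)\,y^*(x)$, and if $y^*(x)=0$ and $z^*(x)=0$ (which is compatible with $z^*(x)\neq z^*(y)$) both values vanish. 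The correct dichotomy---the one the paper uses---is on the functional side: if $\alpha x^*\neq\beta y^*$, pick $z$ with $\alpha x^*(z)\neq\beta y^*(z)$ and use $1\otimes z$; if $\alpha x^*=\beta y^*$, then necessarily $x\neq y$ (since $x^*(x)=y^*(y)=1$ forces $\alpha=\beta$ if $x=y$), and now $z^*\otimes x$ works because $\beta y^*(x)=\alpha x^*(x)=\alpha\neq 0$, giving $\alpha z^*(x)$ versus $\alpha z^*(y)$. So the fix is minor, but your writeup as it stands does not establish that $\widetilde H$ is separating in the sense Theorem~\ref{thm:cls} requires.
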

\begin{proof}
Suppose that $N(H)>0$. Then $N(H)\|f\|\le v(f)\le \|f\|$ for all
$f\in H$. So the $v(\cdot)$ is a complete norm on $H$.

Consider the linear map $f\mapsto \tilde f$ from $H$ into
$C(\Pi(X))$ defined by \[ \tilde f(x, x^*) = x^*(f(x)).\] Notice
that $v(f) = \|\tilde f\|$ for every $f\in H$. Let $\tilde{ H}$ be
the image in $C(\Pi(X))$. So two Banach spaces $(H, v)$ and
$(\tilde{H}, \|\cdot \|)$ is isometrically isomorphic.

Since $X$ is finite dimensional, $\Pi(X)$ is compact metrizable so
$C(\Pi(X))$ is separable. Then $\tilde{H}$ is a separable subspace
of $C(\Pi(X))$.

Claim: $\tilde{H}$ is separating

Let $(x, x^*) \neq (y, y^*)\in \Pi(X)$ and let $\alpha, \beta\in
S_\mathbb{C}$. If $\alpha x^* \neq \beta y^*$, then choose $z\in
S_X$ such that $\alpha x^*(z) \neq \beta y^*(z)$. Set $f:= 1\otimes
z\in H$. Then
\[\alpha \delta_{(x,x^*)}(\tilde f) =\alpha \tilde f (x,x^*) =
\alpha x^*(z) \neq \beta y^*(z) = \beta \tilde f(y,
y^*)=\beta\delta_{(y,y^*)}(\tilde f).\] If $\alpha x^* = \beta y^*$,
then $x\neq y$, and choose $z^*\in S_{X^*}$ such that $z^*(x) \neq
z^*(y)$. Set $g: = z^* \otimes x \in H$. Then $\beta y^*(x) = \alpha
\neq 0$ and
\[ \alpha \tilde g (x, x^*) = \alpha z^*(x)x^*(x) =\beta z^*(x)y^*(x)\neq  \beta z^*(y) y^*(x)=
\beta \tilde g(y, y^*),\] hence $\alpha \delta_{(t,t^*)}(\tilde
g)\neq \beta\delta_{(s,s^*)}(\tilde g)$. Therefore $\tilde H$ is a
separating separable subspace of $C(\Pi(X))$. By
Theorem~\ref{thm:cls}, the set of peak functions in $\tilde{H}$ is
dense. So we get the desired result.
\end{proof}

Recall the following theorem of L.A. Harris \cite{H}.

\begin{thm}[Harris]\label{thm:harris}
Let $h\in A_b(B_X:X)$ and  $P_m$ the $m$-th term of the Taylor
series expansion for $h$ about 0. Then $\|P_m\|\le k_m v(h),$ where
$k_0=1$, $k_1=e$ and $k_m = m^{m/(m-1)}$ for $m\ge 2$.
\end{thm}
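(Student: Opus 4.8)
The plan is to derive the polynomial estimate from the corresponding scalar-valued statement, which is the classical fact that if $p(\zeta)=\sum_{m=0}^{M} a_m\zeta^m$ is a polynomial with $\sup_{|\zeta|\le 1}|p(\zeta)|\le 1$, then $|a_m|\le k_m$ with $k_0=1$, $k_1=e$, $k_m=m^{m/(m-1)}$ for $m\ge 2$. (This coefficient bound is the scalar ``Bohr-type'' inequality underlying Harris's numerical-range version; one obtains it by comparing the Cauchy estimate $|a_m|\le r^{-m}$ on a small circle of radius $r$ with the trivial bound $|a_m|\le 1+\cdots$ and optimizing in $r$, or equivalently by the Schur/Schwarz-Pick argument on the disc.) The point is to reduce the vector statement about $v(h)$ to this one-variable fact by slicing along complex lines and testing against functionals from $\Pi(X)$.

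First I would fix $h\in A_b(B_X:X)$ with Taylor expansion $h=\sum_{m\ge 0}P_m$ about $0$, and fix $x\in S_X$ together with $x^*\in S_{X^*}$ satisfying $x^*(x)=1$, i.e.\ $(x,x^*)\in\Pi(X)$. Consider the scalar function
\[
g(\zeta):=x^*\bigl(h(\zeta x)\bigr),\qquad |\zeta|\le 1,
\]
which lies in $A_b(\mathbb{D})$ (the disc algebra on the unit disc), with Taylor coefficients $g^{(m)}(0)/m!=x^*(P_m(x))$ since $P_m$ is $m$-homogeneous. The key observation linking this to the numerical radius is that for each $\zeta\in\mathbb{D}\setminus\{0\}$ the pair $(\zeta x,\ \bar\zeta/|\zeta|^2\,\cdot\, x^*)$ — more precisely, rescaling so that the functional has norm one and evaluates to $1$ on $\zeta x$ — belongs to $\Pi(X)$, and hence $|g(\zeta)|=|\zeta|\cdot|\text{(that numerical-range value)}|\le |\zeta|\, v(h)$. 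Thus $|g(\zeta)|\le v(h)$ on the closed disc, so after dividing by $v(h)$ (assuming $v(h)>0$; the case $v(h)=0$ forces $h\equiv 0$ on each line, hence $h\equiv 0$) the scalar coefficient inequality gives $|x^*(P_m(x))|\le k_m\, v(h)$.

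Next I would take the supremum over all admissible $x^*$, then over $x\in S_X$. For fixed $x\in S_X$, since $P_m(x)\in X$ and $X$ (as a subspace of its bidual) satisfies $\|P_m(x)\|=\sup\{|x^*(P_m(x))|:x^*\in S_{X^*}\}$, I need that the supremum can be taken over functionals $x^*$ with the extra constraint $x^*(x)=1$. This is where the mild subtlety lies: a norming functional for $P_m(x)$ need not satisfy $x^*(x)=1$. The standard device — and the step I expect to be the only real point requiring care — is a rotation/averaging argument: replace $x$ by $\lambda x$ for $|\lambda|=1$ and average $\lambda^{-m} g_\lambda(\zeta)$ where $g_\lambda(\zeta)=x^*(h(\zeta\lambda x))$, or alternatively apply the one-variable estimate along the line through $x$ using a functional that norms $P_m(x)$ but perturb as in Harris's original argument so that the constraint $x^*(x)=1$ is recovered in the limit; in the finite-dimensional or reflexive cases one can also simply note that the set of $(x,x^*)\in\Pi(X)$ is large enough by Bishop–Phelps-type density. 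In all cases one concludes $\|P_m(x)\|\le k_m\, v(h)$ for every $x\in S_X$, and taking the supremum over $x\in S_X$ yields $\|P_m\|=\sup_{x\in B_X}\|P_m(x)\|=\sup_{x\in S_X}\|P_m(x)\|\le k_m\, v(h)$, which is the claim. The constants $k_0=1$, $k_1=e$, $k_m=m^{m/(m-1)}$ are exactly the sharp constants from the scalar coefficient inequality, so nothing is lost in the reduction.
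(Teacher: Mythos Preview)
The paper does not give its own proof of this statement; it is quoted from Harris's paper \cite{H} and used as an input. So there is nothing in the paper to compare against directly, but your proposal has a genuine gap that is worth naming.

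The scalar ``coefficient inequality'' you invoke is vacuous. If $p(\zeta)=\sum a_m\zeta^m$ satisfies $\sup_{|\zeta|\le1}|p(\zeta)|\le1$, then Cauchy's estimate already gives $|a_m|\le1$, which is stronger than your claimed $|a_m|\le k_m$ since every $k_m\ge1$; there is no non-trivial one-variable inequality of the shape you describe with those constants. Applying your argument to $g(\zeta)=x^*(h(\zeta x))$, the bound $|g(\zeta)|\le v(h)$ (which is correct on $|\zeta|=1$, hence on the closed disc by the maximum principle; your intermediate claim $|g(\zeta)|\le|\zeta|\,v(h)$ is false at $\zeta=0$) yields via Cauchy only $|x^*(P_m(x))|\le v(h)$ for $(x,x^*)\in\Pi(X)$, that is, $v(P_m)\le v(h)$.

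The entire content of Harris's theorem is the remaining step $\|P_m\|\le k_m\,v(P_m)$, which you label a ``mild subtlety'' and then do not carry out. None of your suggested devices closes it: rotating $x\mapsto\lambda x$ stays inside $\Pi(X)$ and changes nothing; averaging over $\lambda$ likewise; and Bishop--Phelps produces norm-attaining functionals, but each attains at \emph{some} point of $S_X$, not at the $x$ you have fixed. Worse, if your reduction were valid it would prove $\|P_m\|\le v(h)$ with constant $1$, which is already false for $m=1$: taking $h$ to be a bounded linear operator $T$, the sharp bound on a complex Banach space is $\|T\|\le e\,v(T)$ (Bohnenblust--Karlin), and examples with $\|T\|>v(T)$ abound. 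The constants $k_m$ enter precisely through this norm-versus-numerical-radius comparison, and that requires a genuinely different mechanism---in Harris's argument, a growth estimate for $\|h(rx)\|$ in terms of $v(h)$ obtained by playing $h$ off against the identity map, followed by the Cauchy estimate $\|P_m\|\le r^{-m}\sup_{\|x\|\le r}\|h(x)\|$ and an optimization in $r$---rather than a direct slice along a complex line.
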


\begin{prop}\label{prop:positivenumerical1}
Let $m\ge 1$ be a natural number and $H_m$ be the subspace of
$A_b(B_X: X)$ consisting of all polynomials of degree $\le m$. Then
its numerical index $N(H_m)$ is positive.
\end{prop}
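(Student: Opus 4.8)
The plan is to show that the numerical radius $v(\cdot)$ is equivalent to the usual norm $\|\cdot\|$ on the finite-dimensional-ish space $H_m$ of polynomials of degree $\le m$, with an equivalence constant depending only on $m$. First I would write any $Q\in H_m$ as $Q = P_0 + P_1 + \cdots + P_m$ where $P_j$ is the $j$-homogeneous term, i.e. the $j$-th Taylor coefficient of $Q$ at $0$. By Harris's theorem (Theorem~\ref{thm:harris}) applied to $h = Q\in A_b(B_X:X)$, each Taylor term satisfies $\|P_j\|\le k_j\, v(Q)$ with $k_0 = 1$, $k_1 = e$, $k_j = j^{j/(j-1)}$ for $j\ge 2$. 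Then by the triangle inequality
\[
\|Q\| \;=\; \Bigl\|\sum_{j=0}^m P_j\Bigr\| \;\le\; \sum_{j=0}^m \|P_j\| \;\le\; \Bigl(\sum_{j=0}^m k_j\Bigr) v(Q).
\]
Set $c_m := \sum_{j=0}^m k_j$, a finite constant depending only on $m$. Combined with the trivial inequality $v(Q)\le \|Q\|$, this gives $\tfrac{1}{c_m}\|Q\| \le v(Q) \le \|Q\|$ for all $Q\in H_m$.

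From this two-sided estimate the conclusion $N(H_m) = \inf\{v(Q): Q\in H_m,\ \|Q\| = 1\} \ge 1/c_m > 0$ is immediate, since $v(Q)\ge \|Q\|/c_m = 1/c_m$ whenever $\|Q\| = 1$.

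The one subtlety I would want to double-check is that Harris's theorem, as stated, is applicable to every $Q\in H_m$: it requires $h\in A_b(B_X:X)$, which is fine since polynomials of degree $\le m$ are entire, hence certainly bounded and holomorphic on $B_X$; and it requires identifying $P_m$ (and more generally $P_j$) as "the $m$-th term of the Taylor series expansion for $h$ about $0$", which for a polynomial is exactly its $j$-homogeneous component, so there is no circularity and no hidden convergence issue. I do not expect a genuine obstacle here — the whole point is that $v(\cdot)$, a priori only a seminorm bounded above by $\|\cdot\|$, is forced by Harris's polynomial-by-polynomial bound to control each Taylor coefficient, and on $H_m$ there are only finitely many of them, so the bound closes. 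The main thing to be careful about is simply to track that $c_m$ depends only on $m$ and not on $X$, which is clear from the explicit form of the $k_j$.
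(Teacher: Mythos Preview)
Your proof is correct and follows essentially the same approach as the paper: apply Harris's theorem to bound each homogeneous component $\|P_j\|\le k_j\,v(Q)$, sum over $j=0,\dots,m$, and conclude $\|Q\|\le c_m\,v(Q)$ with $c_m=\sum_{j=0}^m k_j$, giving $N(H_m)\ge c_m^{-1}>0$. Your additional verification that Harris's theorem applies to polynomials in $H_m$ is sound and matches the paper's (implicit) reasoning.
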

\begin{proof}
Let $h\in H_m$ and $x\in B_X^\circ$. Then  $h(x) =\sum_{k=0}^m
P_k(x)$ Then by Theorem~\ref{thm:harris},
\[ \sum_{k=0}^m \|P_k(x)\| \le \sum_{k=0}^m \|P_k\| \le
\sum_{k=0}^m k_m v(h)\le c_m v(h),\] where $c_m=\sum_{k=0}^m k_m>0$.
Hence $\|h\| \le c_m v(h).$ Therefore $N(H_m)\ge c_m^{-1}>0$.
\end{proof}

From Proposition~\ref{prop:densefinitenumerical} and
\ref{prop:positivenumerical1}, we have the following.

\begin{prop}\label{prop:pro1}
Let $m\ge 1$ be a natural number and $H_m$ be the subspace of
$A_b(B_X: X)$ consisting of all polynomials of degree $\le m$. If
$X$ is finite dimensional, the set of all numerical peak functions
in $H_m$ is a dense $G_\delta$-subset of $H_m$.
\end{prop}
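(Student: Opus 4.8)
The plan is to combine the two preceding propositions in the obvious way, once we check that the space $H_m$ of polynomials of degree $\le m$ satisfies all the hypotheses of Proposition~\ref{prop:densefinitenumerical}. First I would observe that $H_m$ is a subspace of $C_b(B_X:X)$: indeed each polynomial is bounded on the compact set $B_X$ (here we use that $X$ is finite dimensional, so $B_X$ is compact), and is continuous, hence $H_m \subseteq A_b(B_X:X) \subseteq C_b(B_X:X)$. Next I would verify that $H_m$ contains the functions of the form \eqref{eq:functionform}: the constant function $1\otimes x$ is a polynomial of degree $0$ (when $x\neq 0$), and $y^*\otimes z$ is a $1$-homogeneous polynomial (for $y^*\neq 0$), hence both have degree $\le m$ for every $m\ge 1$ and so lie in $H_m$. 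The zero cases are trivial since $0\in H_m$.

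With the structural hypotheses in place, the remaining hypothesis of Proposition~\ref{prop:densefinitenumerical} is that the numerical index $N(H_m)$ is positive; but this is exactly the content of Proposition~\ref{prop:positivenumerical1}, which gives $N(H_m)\ge c_m^{-1}>0$ with $c_m = \sum_{k=0}^m k_m$ and $k_m$ the Harris constants from Theorem~\ref{thm:harris}. Therefore Proposition~\ref{prop:densefinitenumerical} applies directly to $H=H_m$, and its conclusion says precisely that the set of all numerical peak functions in $H_m$ is a dense $G_\delta$-subset of $H_m$. This is what we want to prove.

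I do not expect any genuine obstacle here; the statement is a formal corollary of the two propositions just established, and the only thing to be careful about is the bookkeeping of which space plays the role of $H$ and the verification that $H_m$ meets the list of requirements. The one mildly delicate point is making sure that $H_m$ is given the right topology: Proposition~\ref{prop:densefinitenumerical} is stated for subspaces of $C_b(B_X:X)$ with the sup norm, and $H_m$ inherits exactly that norm as a (finite-dimensional, hence closed) subspace, so $H_m$ is itself a Banach space and the Baire-category statement ``dense $G_\delta$'' is meaningful in $H_m$. Once this is noted, the proof is a one-line citation of Propositions~\ref{prop:densefinitenumerical} and \ref{prop:positivenumerical1}.
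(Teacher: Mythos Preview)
Your proposal is correct and matches the paper's approach exactly: the paper simply states that the result follows from Propositions~\ref{prop:densefinitenumerical} and~\ref{prop:positivenumerical1}, and your write-up just fills in the routine verification that $H_m$ meets the hypotheses of Proposition~\ref{prop:densefinitenumerical}.
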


\begin{thm}\label{prop:peak}
Let $X$ be a finite dimensional complex Banach space. Then the set
of all norm and numerical peak functions in $A_u(B_X: X)$ is dense.
In fact, setting \begin{align*}\Delta_1= \{ f\in A_u(B_X:X)&:f
\mbox{ is peak function at~ } t\in B_X \mbox{ and } \\ &\ \ \ \ \ \
\ \ \ f(t)/\|f\| \mbox{ is a smooth point of } B_X\},
\\
\Delta_2= \{ f\in A_u(B_X: X)&:f \mbox{ is a numerical peak
function} \},\end{align*} the intersection $\Delta_1\cap \Delta_2$
is dense in $A_u(B_X:X)$.
\end{thm}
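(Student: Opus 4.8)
The plan is to realize $\Delta_1 \cap \Delta_2$ as a dense $G_\delta$ subset, and then invoke the two finite-dimensional results already established. Since $X$ is finite dimensional, $A_u(B_X:X) = A_b(B_X:X)$, and we work inside this Banach space. First I would show that $\Delta_1$ is a dense $G_\delta$ subset of $A_u(B_X:X)$. The space $A_u(B_X:X)$ is a separating separable subspace of $C(B_X:X)$ (it contains all $1 \otimes x$ and $y^* \otimes z$, which separate points of $B_X$ and values, and it is separable because $X$ is finite dimensional and polynomials are dense). Applying Theorem~\ref{thm:cls} directly with $K = B_X$ (compact since $X$ is finite dimensional) and $Y = X$ yields that $\{ f \in A_u(B_X:X) : f \text{ is a peak function at some } t \in B_X,\ f(t)/\|f\| \in \mathrm{sm}(B_X)\}$ is a dense $G_\delta$ subset; this set is exactly $\Delta_1$.

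Next I would show that $\Delta_2$, the set of numerical peak functions, is a dense $G_\delta$ subset of $A_u(B_X:X)$. This does not follow immediately from Proposition~\ref{prop:pro1}, which only gives density of numerical peak functions inside each $H_m$ (polynomials of degree $\le m$) in the $H_m$-topology. To pass to $A_u(B_X:X)$, I would use that $\bigcup_m H_m$ is norm-dense in $A_u(B_X:X)$ (Taylor polynomials of a function uniformly continuous on $B_X$ converge, after the Cesàro-type averaging or directly since $X$ is finite dimensional, uniformly on $B_X$). The key point is that the map $f \mapsto \tilde f$, $\tilde f(x,x^*) = x^*(f(x))$, sends $A_u(B_X:X)$ isometrically (for $v(\cdot)$) onto a separating separable subspace of $C(\Pi(X))$, and $\Pi(X)$ is compact metrizable. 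So I would instead re-run the argument of Proposition~\ref{prop:densefinitenumerical} with $H = A_u(B_X:X)$ directly: the only hypotheses there are that $H$ contains the functions in \eqref{eq:functionform} and that $N(H) > 0$. The functions \eqref{eq:functionform} lie in $A_u(B_X:X)$; the remaining issue is whether $N(A_u(B_X:X)) > 0$, i.e.\ whether $\|f\| \le C\, v(f)$ for all $f$ with a constant independent of degree. This is \emph{false} in general (the Harris constants $k_m$ grow), so one cannot apply Proposition~\ref{prop:densefinitenumerical} verbatim to all of $A_u(B_X:X)$.

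The way around this — and the main obstacle — is therefore to prove density and $G_\delta$-ness of $\Delta_2$ by a direct Baire-category argument rather than through a single global numerical index. For density: given $f \in A_u(B_X:X)$ and $\eps > 0$, truncate to a polynomial $g \in H_m$ with $\|f - g\| < \eps/2$, then apply Proposition~\ref{prop:pro1} inside $H_m$ to find a numerical peak function $h \in H_m$ with $v(g - h) < \eps/2$, hence (since $v \le \|\cdot\|$ on the truncation and $v(f-h) \le \|f-g\| + v(g-h)$) $h$ is $\eps$-close to $f$ in norm and is a numerical peak function in $A_u(B_X:X)$ — note that being a numerical peak function is a property of $h$ alone (uniqueness of the maximizing pair in $\Pi(X)$), independent of the ambient subspace. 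For the $G_\delta$ structure: write $\Delta_2 = \bigcap_{n} U_n$ where $U_n$ is the set of $f$ such that the supremum $v(f) = \max_{(x,x^*) \in \Pi(X)} |x^*(f(x))|$ is attained on a set of $\tau$-diameter $< 1/n$; each $U_n$ is open in $A_u(B_X:X)$ by a standard compactness/continuity argument (the map $(x,x^*) \mapsto x^*(f(x))$ depends continuously on $f$ uniformly over the compact set $\Pi(X)$). Finally, both $\Delta_1$ and $\Delta_2$ being dense $G_\delta$ in the Baire space $A_u(B_X:X)$, their intersection is dense $G_\delta$, which gives the stated conclusion. I expect the verification that $U_n$ is open — uniform control of the near-maximizing set of $\tilde f$ under small perturbations of $f$ — to be the one step requiring genuine care, though it is routine compactness.
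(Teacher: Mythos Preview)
Your approach is correct and reaches the conclusion, but it is organized differently from the paper's proof. The paper works inside each $H_m$ first: since $H_m$ is a separating separable subspace of $C(B_X:X)$, Theorem~\ref{thm:cls} gives that $\Delta_1 \cap H_m$ is a dense $G_\delta$ in $H_m$; combined with Proposition~\ref{prop:pro1}, the intersection $\Delta_1 \cap \Delta_2 \cap H_m$ is a dense $G_\delta$ subset of $H_m$, and then polynomial approximation transports density (and only density) to $A_u(B_X:X)$. You instead treat $\Delta_1$ and $\Delta_2$ separately at the level of $A_u(B_X:X)$, applying Theorem~\ref{thm:cls} globally for $\Delta_1$ and arguing density and $G_\delta$ for $\Delta_2$ directly. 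Your route yields the $G_\delta$ conclusion for $\Delta_1 \cap \Delta_2$ in $A_u(B_X:X)$ immediately---the paper obtains this only later, via Proposition~\ref{prop:Gdelta}---at the cost of the extra compactness argument for the openness of your sets $U_n$ (which is essentially the content of that proposition).

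One correction: in your density argument for $\Delta_2$ you find $h \in H_m$ with $v(g-h) < \eps/2$ and then assert that $h$ is $\eps$-close to $f$ in norm via $v(f-h) \le \|f-g\| + v(g-h)$. This does not give norm-closeness, since $v \le \|\cdot\|$ points the wrong way. The fix is immediate: because $N(H_m) > 0$ by Proposition~\ref{prop:positivenumerical1}, the norms $v$ and $\|\cdot\|$ are equivalent on $H_m$, so Proposition~\ref{prop:pro1} already gives density in the sup norm; just choose $h$ with $\|g-h\| < \eps/2$ and conclude $\|f-h\| < \eps$.
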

\begin{proof} Notice that if $X$ is a finite dimensional Banach space, then the
subspace $H_m\subset A_u(B_X: X)$ of all polynomials of degree $\le
m$ for some $m\ge 1$ is a separating subspace of  $C(B_X: X)$.
Hence, by Theorem~\ref{thm:cls} and Proposition~\ref{prop:pro1}, the
intersection $\Delta_1 \cap \Delta_2\cap H_m$ is a dense
$G_\delta$-subset of $H_m$. So we get the following theorem.

Let $\epsilon>0$ and $f\in A_u(B_X:X)$. Then choose $p_m$ be a
polynomial of degree $m$ such that $\|f-p_m\|\le \epsilon$. So there
is a norm and  numerical peak function $q\in \Delta_1\cap
\Delta_2\cap H_m$ such that $\|q-p_m\|\le \epsilon$. Hence
$\|f-q\|\le 2\epsilon$. This completes the proof.
\end{proof}

\begin{rem}
Proposition~\ref{prop:Gdelta} shows that the intersection
$\Delta_1\cap \Delta_2$ is in fact a dense $G_\delta$ subset of
$A_u(B_X:X)$ if $X$ is finite dimensional.
\end{rem}

We will say that the $k$-linear mapping $L:X\times \dots \times X\to
Y$ is {\it of finite-type} if it can be written as
\[ L(x_1, \dots, x_k ) = \sum_{i=1}^m x_{1,i}^*(x_1) \dots
x_{k,i}^*(x_k) y_i,~~\forall x_1, \ldots, x_k\in X\] for some
$m\in\mathbb{N}$, $x_{1,1}^*, \dots, x_{k,m}^*$ in $X^*$ and $y_1,
\dots, y_m$ in $Y$.
 We will denote by $L_f(^kX: Y)$ the space of the
$k$-linear mapping from $X$ to $Y$ of finite type. If $P$ is
associated to such a $k$-linear mapping, we will say that it is a
{\it finite-type polynomial}.

\begin{prop}\label{prop:3peak1}
Suppose that $X$ has $(FPA)$-property with $\{(\pi_i, F_i)\}_{i\in
F}$. Then the set of all polynomials $Q\in {\mathcal P}(X:X)$ such
that there exist a projection $\pi_i:X\to F_i$ such that $\pi_i
\circ Q\circ \pi_i = Q$ and $Q|_{F_i}$ is a norm and numerical peak
function as a mapping from $B_{F_i}$ to $F_i$ is dense in
$A_{wu}(B_X: X)$.
\end{prop}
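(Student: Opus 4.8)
The plan is to use the $(FPA)$-property to push the whole problem down to a finite-dimensional subspace, where Theorem~\ref{prop:peak} (equivalently, Propositions~\ref{prop:densefinitenumerical} and \ref{prop:positivenumerical1} together with Theorem~\ref{thm:cls}) already supplies dense families of norm and numerical peak functions. Fix $f\in A_{wu}(B_X:X)$ and $\delta>0$. The first step is to replace $f$ by a finite-type polynomial: since finite-type polynomials are sup-norm dense in $A_{wu}(B_X:X)$, pick a finite-type $P$ with $\|f-P\|\le\delta/3$. Write $P=\sum_jP_j$ with each $P_j$ a $k_j$-homogeneous finite-type polynomial $P_j(x)=\sum_i\bigl(\prod_{l=1}^{k_j}\phi^{(j)}_{l,i}(x)\bigr)y^{(j)}_i$, collect all the functionals occurring in $P$ into a finite list $\phi_1,\dots,\phi_N\in X^*$, and set $G_0=\operatorname{span}\{y^{(j)}_i\}$, a finite-dimensional subspace of $X$. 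From now on $P$ is fixed, and so are $N$, $G_0$ and $\deg P$.

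Next I localize $P$. Consider the finite-rank operator $T:X\to\ell_\infty^N$, $Tx=(\phi_1(x),\dots,\phi_N(x))$, and let $\eps>0$ be a tolerance to be chosen. Clause (2) of the $(FPA)$-property, applied to $T$ and $G_0$, yields a projection $\pi_i:X\to F_i$ with $\|T-T\pi_i\|\le\eps$ and $\|I_{G_0}-\pi_i|_{G_0}\|\le\eps$; the first inequality gives $\|\phi_k-\phi_k\circ\pi_i\|\le\eps$ for every $k$. Put $\widetilde P:=\pi_i\circ P\circ\pi_i$. Since $\|\pi_i\|=1$, all the scalars $\phi_k(x),\phi_k(\pi_i x)$ for $\|x\|\le1$ are bounded by $M:=\max_k\|\phi_k\|$, so a telescoping estimate for the products occurring in $P$ gives $\sup_{x\in B_X}\|P(\pi_i x)-P(x)\|\le C_1\eps$ for a constant $C_1=C_1(P)$ built from $\deg P$, $N$, $M$ and the norms $\|y^{(j)}_i\|$; combining this with $P(\pi_i x)\in G_0$ and $\|I_{G_0}-\pi_i|_{G_0}\|\le\eps$ yields $\|\widetilde P-P\|\le C\eps$ with $C=C(P)$. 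Choose $\eps$ so small that $C\eps\le\delta/3$. Because $\pi_i^2=\pi_i$ and $\pi_i|_{F_i}=\mathrm{id}_{F_i}$, the polynomial $\widetilde P$ has range in $F_i$, satisfies $\pi_i\circ\widetilde P\circ\pi_i=\widetilde P$, and $\widetilde P(x)=\widetilde P|_{F_i}(\pi_i x)$ for all $x\in X$.

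Now perturb inside $F_i$. Let $m_0\ge\max(1,\deg\widetilde P|_{F_i})$ and let $H_{m_0}$ be the space of all polynomials of degree $\le m_0$ from $B_{F_i}$ to $F_i$. Since $F_i$ is finite-dimensional, $H_{m_0}$ is a separating separable subspace of $C(B_{F_i}:F_i)$ containing the functions in \eqref{eq:functionform} (read with $X=F_i$), and $N(H_{m_0})>0$ by Proposition~\ref{prop:positivenumerical1}; hence, exactly as in the proof of Theorem~\ref{prop:peak} (Proposition~\ref{prop:densefinitenumerical} for numerical peak functions, Theorem~\ref{thm:cls} for norm peak functions, and the Baire category theorem to intersect the two dense $G_\delta$ sets), the elements of $H_{m_0}$ that are simultaneously norm and numerical peak functions are dense in $H_{m_0}$. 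So choose a polynomial $R:B_{F_i}\to F_i$ that is a norm and numerical peak function with $\|R-\widetilde P|_{F_i}\|\le\delta/3$, and set $Q:=R\circ\pi_i\in\mathcal P(X:X)$. Then $Q$ has range in $F_i$, $\pi_i\circ Q\circ\pi_i=Q$ (as $R$ maps into $F_i$ and $\pi_i|_{F_i}=\mathrm{id}_{F_i}$), and $Q|_{F_i}=R$ is a norm and numerical peak function from $B_{F_i}$ to $F_i$. Finally, using $\pi_i(B_X)\subseteq B_{F_i}$,
\[\|Q-\widetilde P\|=\sup_{x\in B_X}\bigl\|R(\pi_i x)-\widetilde P|_{F_i}(\pi_i x)\bigr\|\le\|R-\widetilde P|_{F_i}\|\le\delta/3,\]
so $\|Q-f\|\le\|Q-\widetilde P\|+\|\widetilde P-P\|+\|P-f\|\le\delta$. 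As $f$ and $\delta$ were arbitrary, the asserted set is dense in $A_{wu}(B_X:X)$.

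The only step needing real care is the localization in the second paragraph: one must make $\pi_i\circ P\circ\pi_i$ uniformly close to $P$ on $B_X$, which dictates the order of quantifiers (fix $P$, hence the constant $C(P)$, before invoking $(FPA)$ with tolerance $\eps$) and relies on a telescoping bound for products of linear functionals that works precisely because the $(FPA)$-projections have norm one. Everything else is either an elementary identity coming from $\pi_i^2=\pi_i$ and $\pi_i|_{F_i}=\mathrm{id}_{F_i}$, or a direct appeal to already established facts: the density of finite-type polynomials in $A_{wu}(B_X:X)$ and the finite-dimensional denseness statement of Theorem~\ref{prop:peak}.
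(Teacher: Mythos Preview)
Your argument is correct and follows the same three-step scheme as the paper: approximate $f$ by a finite-type polynomial $P$, use $(FPA)$ to replace $P$ by $\pi_i\circ P\circ\pi_i$, then apply Theorem~\ref{prop:peak} on $F_i$ and lift back via $\pi_i$. The localization step is organized a bit differently---the paper packages $P$ into the finite-rank operators $T_k(x)=A_k(x,\cdot,\dots,\cdot)$ valued in spaces of multilinear forms and bounds $\|P_k\circ\pi-P_k\|$ by a binomial expansion of $A_k\bigl((x-\pi x)^{k-j},(\pi x)^j\bigr)$, whereas you list the functionals $\phi_1,\dots,\phi_N$ explicitly, take $T:X\to\ell_\infty^N$, and telescope the scalar products---but both routes give the same $O(\eps)$ bound with constants depending only on $P$.

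One point you assert as an ``already established fact'' deserves a line of justification: the density of finite-type polynomials in $A_{wu}(B_X:X)$ is not automatic for arbitrary $X$, and the paper uses the $(FPA)$-hypothesis here a second time. Namely, $(FPA)$ forces $X^*$ to have the approximation property (see \cite[Lemma~3.1]{JW}), and then \cite[Proposition~2.8]{D} gives that finite-type $k$-homogeneous polynomials are dense among weakly uniformly continuous ones. With that sentence added, your proof is complete and essentially parallel to the paper's.
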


\begin{proof} We follow the ideas in \cite{AAGM}.
The subset of continuous polynomials is always dense in $A_u(B_X:
X)$. Given $f\in A_u(B_X: X)$ and $n\in \mathbb{N}$, it is the limit
in $A_u(B_X: X)$ of sequence of functions $\{f_n \}_n$ defined by
$f_N(x): = f(\frac{n}{n+1}x)$. Then $f_n$ belongs to
$A_b(\frac{n+1}n B_X: X)$.  Thus the Taylor series expansion of
$f_n$ at 0 converges uniformly on $B_X$ for all $n$.

We will also use fact that if $\sum_{k=0}^\infty P_k$ is the Taylor
series expansion of $f\in A_{wu}(B_X: X)$ at 0, then $P_k$ is weakly
uniformly continuous on $B_X$ for all $k$.

Since $X$ has $(FPA)$-property, $X^*$ has the approximation property
(see \cite[Lemma 3.1]{JW}). Then the subspace of $k$-homogeneous
polynomials of finite-type restricted on $B_X$ is dense in the
subspace of all $k$-homogeneous polynomials which are weakly
uniformly continuous on $B_X$ (see \cite[Proposition 2.8]{D}). Thus
the subspace of the polynomials of finite-type restricted to the
closed unit ball of $X$ is dense in $A_{wu}(B_X: X)$.

Assume that $P$ is a finite-type polynomial that can be written as a
finite sum $P=\sum_{k=0}^n P_k$, where each $P_k$ is an homogeneous
finite-type polynomial with degree $k$. Consider the symmetric
$k$-linear form $A_k$ associated to the corresponding polynomial
$P_k$. Since $P_k$ is a finite-type polynomial, then $T_k:X\to
L_f(^{k-1}X: X)$ given by
\[ T_k(x)(x_1, \dots, x_{k-1}): = A_k(x, x_1, \dots, x_{k-1}),~~\forall x\in
X\] is a linear finite-rank operator for any $1\le k\le n$.

The direct sum of these operators, that is, the operator
\[ T:X\to \bigoplus_{k=1}^n L_f(^{k-1}X: X)\] given by
$ T(x): = (T_1(x), \dots, T_N(x)),~~\forall x\in X$ is also of
finite rank.

By the assumption on $X$, given any $\epsilon>0$, there is a
norm-one projection $\pi:=\pi_i:X\to X$ with a finite-dimensional
range such that $\|T-T\pi\|\le \epsilon$ and $\|\pi|_G-I_G\|\le
\epsilon$, where $G$ is the span of $\bigcup_{k=1}^n P_k(X)$.

Let $B_k$ be the symmetric $k$-linear mapping given by $B_k: = A_k
\circ (\pi, \dots, \pi)$, and let $Q_k$ the associated polynomial.
It happens that $Q_k=P_k\circ \pi$. Now for $\|x\|\le 1$, we have

\begin{eqnarray*} \|P_k\circ \pi
(x) &-& P_k(x)\| = \left\|\sum_{j=0}^{k-1} {k\choose j}
A_k((x-\pi(x))^{k-j}, \pi(x)^j) \right\|\\
&=&\left\|\sum_{j=0}^{k-1} {k\choose j} (T_k - T_k\circ
\pi)(x)((x-\pi(x))^{k-j-1}, \pi(x)^j) \right\|\\
&\le& \sum_{j=0}^{k-1} {k\choose j} \|T_k-T_k\circ
\pi\|\|x\|\|x-\pi(x)\|^{k-j-1}\|\pi(x)\|^j\\
&\le& \epsilon \sum_{j=0}^{k-1} {k\choose j}
 2^{k-j-1}\le  4^k \epsilon.
\end{eqnarray*}
Then $\|P_k\circ \pi - P_k\|\le 4^k\epsilon$ and
\[ \|\pi\circ P_k \circ \pi - P_k \|\le  \|\pi\circ P_k \circ \pi - \pi\circ
P_k\| + \|\pi\circ P_k - P_k\|\le 2\cdot 4^k\epsilon.\] Let $R_k =
\pi \circ P_k \circ \pi$ and $R= P_0 + \sum_{k=1}^n R_k$. Then
$\|R-P\|\le 2n4^{n}\epsilon$. By Theorem~\ref{prop:peak}, there is a
numerical and norm peak polynomial $Q':\pi(X)\to \pi(X)$ of degree
$\le n$ such that $\|R|_{\pi(X)} - Q'\|\le \epsilon$. Setting $Q:=
Q' \circ \pi$, $\|P-Q\|\le (2n4^{n}+2)\epsilon$. The proof is done.
\end{proof}

Following \cite[Definition 1.g.1]{LT}, a Banach space $X$ has a {\it
finite-dimensional Schauder decomposition} (FDD for short) if there
is a sequence $\{X_n\}$ of finite-dimensional spaces such that every
$x\in X$ has a unique representation of the form
$x=\sum_{n=1}^\infty x_n$, where $x_n\in X_n$ for every $n$. In such
a case, the projections given by $P_N(x) = \sum_{i=1}^n x_i$, are
linear and bounded operators. If moreover, for every $x^*\in X^*$,
it is satisfied that $\|P_n^*x^* - x^*\|\to 0$, the FDD is called
{\it shrinking}. The FDD is said to be {\it monotone} if $\|P_n\|=1$
for every $n$.

\begin{cor}\label{cor:3condi}
Assume that $X$ is a complex Banach space satisfying at least one of
the following conditions:
\begin{enumerate}
\item It has a shrinking and monotone finite-dimensional
decomposition.

\item $X=L_p(\mu)$, where $\mu$ is a finite measure and $1\le p<
\infty$.
\end{enumerate} Then the set of all
polynomials $Q\in {\mathcal P}(X:X)$ such that there exist a finite
dimensional subspace $F$ and norm-one projection $\pi:X\to F$ such
that $\pi \circ Q\circ \pi = Q$ and $Q|_F$ is a peak and numerical
peak function as a mapping from $B_F$ to $F$ is dense in
$A_{wu}(B_X: X)$.
\end{cor}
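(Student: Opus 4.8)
The plan is to show that both conditions (1) and (2) imply that $X$ has the $(FPA)$-property, and then invoke Proposition~\ref{prop:3peak1} directly. So the entire content of the corollary reduces to verifying the abstract hypothesis in these two concrete situations.

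First I would handle condition (1). If $X$ has a shrinking and monotone FDD $\{X_n\}$ with associated projections $\{P_n\}$, I take $I = \mathbb{N}$, let $\pi_n := P_n$, and let $F_n := P_n(X) = X_1 \oplus \cdots \oplus X_n$. Monotonicity gives $\|\pi_n\| = 1$, so (1) of the $(FPA)$-definition holds. For (2), fix $\epsilon > 0$, a finite-rank operator $T : X \to F$, and a finite-dimensional subspace $G \subset X$. The key observations are: (i) since the FDD is shrinking, $P_n^* \to I$ pointwise on $X^*$, hence $\|T - TP_n\| \to 0$ — this uses that $T$ factors through finitely many functionals (write $T = \sum_{j=1}^k x_j^* \otimes f_j$, so $T - TP_n = \sum_j (x_j^* - P_n^* x_j^*) \otimes f_j$, whose norm is bounded by $\sum_j \|x_j^* - P_n^* x_j^*\|\,\|f_j\| \to 0$); and (ii) since $G$ is finite-dimensional, $\|P_n|_G - I_G\| \to 0$ because $P_n \to I$ pointwise on the compact set $S_G$ and the convergence is uniform there by equicontinuity (the $P_n$ are uniformly bounded by monotonicity). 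Choosing $n$ large enough that both quantities are $\le \epsilon$ completes the verification, so $X$ has $(FPA)$-property with $\{\pi_n, F_n\}$.

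Next I would handle condition (2), $X = L_p(\mu)$ with $\mu$ finite and $1 \le p < \infty$. The natural choice of projections is conditional expectations: for a finite measurable partition $\mathcal{A}$ of the underlying space into sets of positive measure, let $\pi_{\mathcal{A}}$ be the conditional expectation onto the (finite-dimensional) space $F_{\mathcal{A}}$ of $\mathcal{A}$-simple functions. Conditional expectations are contractive on every $L_p$, $1 \le p < \infty$ (and preserve norm-one-ness as projections), giving (1). For (2), one uses the standard fact that simple functions are dense in $L_p$ and hence the conditional expectations $\pi_{\mathcal{A}}$ converge strongly to the identity as $\mathcal{A}$ is refined; the index set $I$ is the directed set of finite partitions ordered by refinement. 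Then, exactly as before, strong convergence plus a factorization argument for the finite-rank $T$ gives $\|T - T\pi_{\mathcal{A}}\| \le \epsilon$ for a sufficiently fine $\mathcal{A}$, and strong convergence on the finite-dimensional $G$ (again upgraded to uniform convergence on $S_G$ via equicontinuity from the uniform bound) gives $\|\pi_{\mathcal{A}}|_G - I_G\| \le \epsilon$. Alternatively, and perhaps more cleanly, one can cite that $L_p(\mu)$ for finite $\mu$ and $1 \le p < \infty$ is separable (when $\mu$ is, say, $\sigma$-finite with separable $L_p$) and has a monotone shrinking FDD for $1 < p < \infty$, reducing (2) to (1) in those cases; but the conditional-expectation argument has the advantage of covering $p = 1$ and general finite $\mu$ uniformly, so I would present that.

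Having established the $(FPA)$-property in both cases, I apply Proposition~\ref{prop:3peak1}: the set of polynomials $Q \in \mathcal{P}(X:X)$ for which there is some $\pi_i : X \to F_i$ with $\pi_i \circ Q \circ \pi_i = Q$ and $Q|_{F_i}$ a norm and numerical peak function on $B_{F_i}$ is dense in $A_{wu}(B_X:X)$. Since each $F_i$ is a finite-dimensional subspace $F$ and each $\pi_i$ is a norm-one projection onto it, this is precisely the assertion of the corollary (noting that "peak function" and "norm peak function" are the same terminology). The only genuine work is the $(FPA)$ verification, and within that the main obstacle is the slightly delicate upgrade from strong (pointwise) convergence $\pi_i \to I$ to the uniform estimate $\|T - T\pi_i\| \le \epsilon$ for the finite-rank operator $T$ — which is why the factorization $T = \sum_j x_j^* \otimes f_j$ through finitely many functionals, combined with the $\mathit{adjoint}$ convergence $\pi_i^* \to I$ pointwise (shrinking-ness, or the self-adjointness of conditional expectations on $L_2$ together with interpolation/duality for $L_p$), is essential; without some form of adjoint convergence, strong convergence of $\pi_i$ alone does not force $\|T - T\pi_i\| \to 0$.
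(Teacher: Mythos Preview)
Your proposal is correct and follows the same route as the paper: verify the $(FPA)$-property in each case (via the FDD projections $P_N$ in (1) and conditional expectations onto finite sub-$\sigma$-algebras in (2)), factor the finite-rank $T$ through finitely many functionals to reduce $\|T-T\pi\|$ to a dual approximation, and then invoke Proposition~\ref{prop:3peak1}. One small caveat worth flagging for $p=1$: the net statement ``$\pi_{\mathcal{A}}^*\to I$ pointwise on $X^*$'' that you invoke fails on $L_\infty$, but the paper's (and implicitly your) argument still goes through because one may choose the finite $\sigma$-algebra $\mathcal{F}$ adapted to the finitely many $s_i\in L_\infty$ representing $T$, so that each $\|E(s_i\mid\mathcal{F})-s_i\|_\infty$ is small---this is exactly what the paper does, and it replaces the unavailable global adjoint convergence.
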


\begin{proof} By Proposition~\ref{prop:3peak1}, we need show that
the spaces satisfying condition (1) and (2) have $(FPA)$-property.
Let $T:X\to F$ be a linear operator from $X$ to a finite dimensional
space $F$ and $G$ be a finite dimensional subspace $G$ of $X$. Given
$\epsilon>0$, there is an $\epsilon/3$-net $\{g_1, \dots, g_n\}$ in
$B_G$ and $T$ can be written as $\sum_{i=1}^m x_i^*\otimes  y_i$ for
some $x_1^*,\dots, x_m^*\in X^*$ and $y_1,\dots, y_m\in F$.

(1) Suppose that $X$ has a shrinking monotone finite-dimensional
decomposition. Then there is $N\in \mathbb{N}$ such that
\[\max_{1\le i\le m}\|y_i\| \cdot \sum_{i=1}^m \|P_N^* x^*_i -  x_i^*\| \le
\epsilon,\ \  \ \ \max_{1\le j\le n} \|P_N g_j - g_j \|\le
\epsilon/3.\] Then for any $x\in B_X$,
\begin{eqnarray*}
\|TP_N x - Tx\| &= \|\sum_{i=1}^m (P_N^*x_i^*)(x) y_i - \sum_{i=1}^m
x_i^*(x)y_i\|\\
&\le \max_{1\le j\le n}\|y_i\| \cdot \sum_{i=1}^m \|P_N^* x^*_i -
x_i^*\|\le \epsilon,
\end{eqnarray*} hence $\|TP_N -  T\|\le \epsilon$. For any $x\in
B_G$, there is $g_j$ such that $\|x-g_j\|\le \epsilon/3$, then
because the decomposition is monotone,
\begin{eqnarray*}
\|P_N x - x\| &\le \|P_N(x-g_j)\| + \|P_N g_j - g_j\| +\|x-g_j\|\\
&\le 2\|x-g_j \| + \|P_N g_j - g_j \| \le \epsilon.
\end{eqnarray*} So taking $P=P_N$, we obtained the desired result.

(2) Suppose that $X=L^p(\mu)$. We may assume that $\mu$ is a
probability measure. For each $1\le i\le m$, there is $s_i\in
L_q(\mu)$ such that  $1/p+1/q=1$ and $x_i^*(f) = \int f s_i\, d\mu$
($f\in L_p(\mu)$). Then there is a sub-$\sigma$-algebra
$\mathcal{F}$ generated by finite disjoint subsets such that
\[ \max_{1\le j\le n}\|y_i\| \cdot \sum_{i=1}^m \|
E(s_i|\mathcal{F})-s_i\|_q \le \frac{\epsilon}2 , \ \ \ \ \max_{1\le
i\le n}\|E(g_i|\mathcal{F})- g_i\|_p \le \frac\epsilon3\]

Define a projection $P:X\to X$ as $Pf = E(f|\mathcal{F}).$ It is
clear that $P$ is a norm-one projection. For any $f\in B_X$,
\begin{eqnarray*}
\|TPf - Tf\| &=& \|\sum_{i=1}^m (x_i^*)(Pf) y_i - \sum_{i=1}^m
x_i^*(f)y_i\|\\&\le& \max_{1\le j\le n}\|y_i\| \cdot \sum_{i=1}^m
|x^*_i(Pf) -
x_i^*(f)|\\
&\le& \max_{1\le j\le n}\|y_i\| \cdot \sum_{i=1}^m |\int_K
(E(f|\mathcal{F})-f) E(s_i|\mathcal{F}) \, d\mu| \\
&\ &\ \ +\max_{1\le j\le n}\|y_i\| \cdot \sum_{i=1}^m |\int_K
(E(f|\mathcal{F})-f) (E(s_i|\mathcal{F})-s_i) \, d\mu|
\\ &=& 0+ \max_{1\le j\le n}\|y_i\| \cdot \sum_{i=1}^m |\int_K
(E(f|\mathcal{F})-f) (E(s_i|\mathcal{F})-s_i) \, d\mu|\\
&\le& \max_{1\le j\le n}\|y_i\| \cdot 2\sum_{i=1}^m \|f\|_p \|
E(s_i|\mathcal{F})-s_i\|_q \le \epsilon.
\end{eqnarray*} On the other hand, for any $f\in
B_G$, there is $g_j$ such that $\|f-g_j\|\le \epsilon/3$. So
\begin{eqnarray*}
\|Pf - f\| &\le \|P(f-g_j)\| + \|Pg_j - g_j\| +\|x-g_j\|\\
&\le 2\|f-g_j \| + \|P g_j - g_j \| \le \epsilon.
\end{eqnarray*} We obtained the desired result. The proof is
complete.
\end{proof}

\begin{rem}
If $X$ is a Banach space satisfying $(FPA)$-property, then the set
of polynomials in $B_{A_{wu}(B_X:X)}$ which has a nontrivial
invariant subspace and has a fixed point is dense in
$B_{A_{wu}(B_X:X)}$.
\end{rem}

\begin{prop}\label{prop:sameradius1}
Suppose that the Banach space $X$ has the $(FPA)$-property with
$\{\pi_i, F_i\}_{i\in I}$. Then $N(H)\ge \inf_{i\in I} N(H_{F_i})$.
\end{prop}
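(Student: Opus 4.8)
The plan is to prove the pointwise estimate $v(f)\ge\bigl(\inf_{i\in I}N(H_{F_i})\bigr)\|f\|$ for every $f\in H$; restricting to $\|f\|=1$ and taking the infimum over $H$ then yields $N(H)\ge\inf_{i\in I}N(H_{F_i})$. Fix $f\in H$ and, for each $i$, set $g_i:=\pi_i\circ f\circ\pi_i|_{F_i}$, which is an element of $H_{F_i}\subseteq A_b(B_{F_i}:F_i)$ (it is continuous on $B_{F_i}$ and, since $\pi_i$ is linear and bounded and $f$ is holomorphic on $B_X^\circ$, holomorphic on $B_{F_i}^\circ$). The whole argument reduces to two comparisons: $v(g_i)\le v(f)$ for every $i$, and $\sup_{i\in I}\|g_i\|=\|f\|$. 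Granting these, for each $i$ we get $v(f)\ge v(g_i)\ge N(H_{F_i})\,\|g_i\|\ge\bigl(\inf_{j\in I}N(H_{F_j})\bigr)\|g_i\|$, and taking the supremum over $i$ finishes the proof.

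For the first comparison, fix $i$ and $(y,y^*)\in\Pi(F_i)$. Since $\pi_i$ restricts to the identity on $F_i$, $g_i(y)=\pi_i(f(y))$, hence $y^*(g_i(y))=y^*(\pi_i(f(y)))$. Using Hahn--Banach, extend $y^*$ to $\bar y^*\in S_{X^*}$ and put $w^*:=\pi_i^*\bar y^*\in X^*$. Because $\pi_i(f(y))\in F_i$ and $\bar y^*|_{F_i}=y^*$, one has $w^*(f(y))=\bar y^*(\pi_i(f(y)))=y^*(g_i(y))$; moreover $\|w^*\|\le\|\pi_i^*\|=1$ and $w^*(y)=\bar y^*(\pi_i y)=\bar y^*(y)=1$, so in fact $\|w^*\|=1$ and $(y,w^*)\in\Pi(X)$. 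Therefore $|y^*(g_i(y))|=|w^*(f(y))|\le v(f)$, and taking the supremum over $(y,y^*)\in\Pi(F_i)$ gives $v(g_i)\le v(f)$.

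For the second comparison, $\|\pi_i\|=1$ already gives $\|g_i\|\le\|f\|$ for every $i$. For the reverse, let $\eps>0$ and choose $x_0\in B_X$ with $\|f(x_0)\|>\|f\|-\eps$. Since $f$ is continuous at $x_0$, pick $\delta>0$ with $\|f(x)-f(x_0)\|<\eps$ whenever $\|x-x_0\|<\delta$. Now apply the $(FPA)$-property to the zero operator and to $G:=\vspan\{x_0,f(x_0)\}$: there is an index $i$ with $\|I_G-\pi_i|_G\|$ so small that $\|\pi_i x_0-x_0\|<\delta$ and $\|\pi_i(f(x_0))-f(x_0)\|<\eps$. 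Then $\pi_i x_0\in B_{F_i}$, and $g_i(\pi_i x_0)=\pi_i(f(\pi_i x_0))$, so $\|g_i(\pi_i x_0)-f(x_0)\|\le\|f(\pi_i x_0)-f(x_0)\|+\|\pi_i(f(x_0))-f(x_0)\|<2\eps$, whence $\|g_i\|\ge\|g_i(\pi_i x_0)\|>\|f\|-3\eps$. Letting $\eps\to0$ gives $\sup_{i\in I}\|g_i\|=\|f\|$.

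The main obstacle is precisely this last step: elements of $A_b(B_X:X)$ are only bounded and continuous, not uniformly continuous, so one cannot perturb $f$ uniformly. The remedy is to pin down a single near-maximizing point $x_0$ first, use plain continuity of $f$ at $x_0$, and only then invoke the $(FPA)$-property (with $G$ chosen to contain both $x_0$ and $f(x_0)$, so that $\pi_i$ nearly fixes the relevant input and output). Everything else --- the inequality $v(g_i)\le v(f)$ and the assembling of the estimates --- is a routine Hahn--Banach argument and order-of-quantifiers bookkeeping.
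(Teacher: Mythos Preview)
Your proof is correct and follows essentially the same approach as the paper's: both reduce to showing $v(g_i)\le v(f)$ via a Hahn--Banach/adjoint argument and that $\|\pi_i\circ f\circ\pi_i\|$ can be made arbitrarily close to $\|f\|$ via the $(FPA)$-property. You actually supply the justification for the latter step, which the paper simply asserts, and you avoid the paper's (unnecessary) appeal to attainment of $v_{F_i}(g_i)$ by bounding each $|y^*(g_i(y))|$ directly.
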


\begin{proof}
Let  $f\in S_H$. given $\epsilon>0$, there is a norm one projection
$\pi$ with a finite dimensional range $F$ such that $\|\pi\circ f
\circ \pi \|\ge 1-\epsilon$. Let $g = \pi\circ f \circ \pi |_F$ as a
map in $H_F$.
\[ v_F(g) \ge N(H_F) \|g\|\ge N(H_F)(1-\epsilon).\]
Then there is $(y, y^*)\in \Pi(H_F)$ such that $ v_{H_F}(g) =
|y^*(g(y))|$ since $F$ is finite dimensional. Notice that $(y,
\pi^*(y^*))\in \Pi(X)$ and so \[ v_F(g) = |\pi^*y^*(f(\pi(y)))|=
|\pi^*x^*(f(y))|\le v_H(f).\] Hence $v_H(f) \ge (1-\epsilon)
N(H_F)\ge (1-\epsilon)\inf_{i\in I} N(H_{F_i})$. Therefore $N(H)\ge
\inf_{i\in I} N(H_{F_i})$.
\end{proof}

\begin{lem}\label{lem:basic} Let $X$ be a  Banach space and
 $f\in A_b(B_X: X)$. Suppose that there is $y$ in $B_X$ and $y^*\in
B_{X^*}$ such that $|y^*(y)|= \|y^*\| \cdot \|y\|$. Then $
|y^*(f(y))|\le v(f).$ In particular, $\|f(0)\|\le v(f)$.
\end{lem}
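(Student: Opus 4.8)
The plan is to reduce the stated inequality to the easy case in which $y$ lies on the unit sphere — where a genuine element of $\Pi(X)$ can be exhibited — and then to read the ``in particular'' clause straight off Harris's theorem.

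Discarding the trivial case $y^*=0$, assume $s:=\|y^*\|\in(0,1]$ and first suppose $y\neq 0$. I would set $u:=y/\|y\|\in S_X$; the hypothesis $|y^*(y)|=\|y^*\|\,\|y\|$ says precisely that $u$ attains the norm of $y^*$, i.e. $|y^*(u)|=\|y^*\|$. The scalar function $\phi(\lambda):=y^*\!\bigl(f(\lambda u)\bigr)$ is holomorphic on the open unit disc and, since $f\in A_b(B_X:X)$ is continuous on all of $B_X$, continuous up to the boundary circle. By the maximum modulus principle there is $\lambda_0$ with $|\lambda_0|=1$ such that $|y^*(f(y))|=|\phi(\|y\|)|\le|\phi(\lambda_0)|=|y^*(f(\lambda_0 u))|$; moreover $x:=\lambda_0 u$ satisfies $\|x\|=1$ and $|y^*(x)|=\|y^*\|=\|y^*\|\,\|x\|$. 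Hence it suffices to prove the inequality under the extra assumption $\|y\|=1$.

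So assume $\|y\|=1$, whence $\mu:=y^*(y)$ has modulus $|\mu|=s=\|y^*\|$. Then $z^*:=\mu^{-1}y^*$ has $\|z^*\|=1$ and $z^*(y)=1$, that is, $(y,z^*)\in\Pi(X)$; by the definition of the numerical radius $|z^*(f(y))|\le v(f)$, and since $|y^*(f(y))|=|\mu|\,|z^*(f(y))|=s\,|z^*(f(y))|$ with $s\le 1$, we get $|y^*(f(y))|\le v(f)$, as desired. Finally, $\|f(0)\|=\|P_0\|\le k_0\,v(f)=v(f)$ is the case $m=0$ of Theorem~\ref{thm:harris}; this also settles the remaining possibility $y=0$ of the first assertion, where the hypothesis holds vacuously with an arbitrary $y^*\in B_{X^*}$.

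The one step that genuinely needs care is the reduction: one must notice that the alignment hypothesis is exactly what licenses passing to the boundary point $\lambda_0 u$, since only then does rescaling $y^*$ by $\mu^{-1}$ land in $\Pi(X)$. An interior point $y$ can never be the first coordinate of a pair in $\Pi(X)$, so some reduction of this kind is unavoidable, and everything after it is routine. (One could sidestep quoting Harris for $\|f(0)\|\le v(f)$ by combining the maximum modulus step at $\lambda=0$ with Bishop--Phelps density of norm-attaining functionals in $S_{X^*}$, but invoking Theorem~\ref{thm:harris} is cleaner.)
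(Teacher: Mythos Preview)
Your proof is correct. For $y\neq 0$ you do exactly what the paper does: push $y$ out to the unit sphere via the maximum modulus principle along the ray $\lambda\mapsto\lambda u$, then normalize $y^*$ by the phase factor $\mu^{-1}$ to land in $\Pi(X)$. The only genuine difference is at $y=0$: you read $\|f(0)\|\le v(f)$ off the $m=0$ case of Harris's theorem, whereas the paper proves it from scratch via Bishop--Phelps density of norm-attaining functionals plus the maximum modulus step (precisely the alternative you sketch in your closing parenthetical). Both routes are legitimate; yours is shorter but imports a stronger external result, while the paper's keeps the lemma self-contained modulo Bishop--Phelps.
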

\begin{proof}If $y^*=0$, then it is clear. So we may assume that $y^*\neq 0$.
 Suppose first that $y=0$. By the Bishop-Phelps theorem \cite{BP}, given $\epsilon>0$, there is $w^*\in
B_{X^*}\setminus \{0\}$ such that $\|w^*- y^*\|\le \epsilon$ and
$w^*$ attains its norm at some $x\in S_X$. Then by the maximum
modulus theorem,
\begin{eqnarray*}  |y^*(f(0))| &\le |w^*(f(0))| + \epsilon \|f(0)\| \le |\frac{w^*}{\|w^*\|} (f(0))| + \epsilon\|f(0)\|\\
&\le \max_{|\lambda|= 1} |\frac{w^*}{\|w^*\|} (f(\lambda x))| +
\epsilon \|f(0)\| \le v(f)+ \epsilon\|f(0)\|.\end{eqnarray*} Since
$\epsilon>0$ is arbitrary, $|y^*(f(0))|\le v(f)$.

In case that $y\neq 0$, then again by the maximum modulus theorem,
\[|y^*(f(y))|\le  |\frac{y^*}{\|y^*\|}( f(y))| =
\max_{|\lambda|= 1} |\frac{y^*}{\|y^*\|}( f(\lambda \frac{y}{\|y\|})
)|\le v(f).\] This completes the proof.
\end{proof}

\begin{prop}\label{prop:sameradius2}
Let $H$ be a subspace of $A_b(B_X: X)$ with a numerical boundary
$\Gamma$. Suppose that  a norm-one finite dimensional projection
$(\pi, F)$  is parallel to  $\Gamma$. Then for any $f\in H_F$,
\[ v_F(f) = v_X(f\circ \pi),\] where $v_X(f\circ \pi)$ is a numerical
radius as a function  $f\circ \pi: B_X\to X$.
\end{prop}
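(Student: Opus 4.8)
The plan is to prove the two inequalities $v_F(f)\le v_X(f\circ\pi)$ and $v_X(f\circ\pi)\le v_F(f)$ separately, using the parallelism hypothesis precisely where the numerical boundary is needed. For the first inequality, take $(y,y^*)\in\Pi(F)$ at which $v_F(f)$ is (approximately) attained, so $v_F(f)\approx|y^*(f(y))|$. The point $y$ lies in $B_F\subset B_X$, and $\pi(y)=y$ since $\pi$ is the identity on $F$. The functional $y^*$ is defined on $F$; extend it via $\pi^*$, i.e. consider $\pi^*(y^*)=y^*\circ\pi\in X^*$. Because $\pi$ has norm one, $\|\pi^*(y^*)\|\le\|y^*\|=1$, and $\pi^*(y^*)(y)=y^*(\pi(y))=y^*(y)=1$, so $(y,\pi^*(y^*))\in\Pi(X)$. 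Then
\[
|y^*(f(y))| = |\pi^*(y^*)\bigl(f(\pi(y))\bigr)| = |\pi^*(y^*)\bigl((f\circ\pi)(y)\bigr)| \le v_X(f\circ\pi),
\]
and taking the supremum over such $(y,y^*)$ (or passing to the limit in an approximating sequence, using that $F$ is finite-dimensional so $\Pi(F)$ is compact and the sup is attained) gives $v_F(f)\le v_X(f\circ\pi)$. This direction does not use parallelism at all.

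For the reverse inequality $v_X(f\circ\pi)\le v_F(f)$, let $(x,x^*)\in\Pi(X)$ be arbitrary; we must bound $|x^*((f\circ\pi)(x))| = |x^*(f(\pi(x)))|$. The issue is that $(x,x^*)$ need not interact nicely with $F$: neither $\pi(x)$ need be a norm-one element of $F$ nor $x^*|_F$ a norm-one functional, and $x^*$ restricted to $F$ need not be ``aligned'' with $\pi(x)$. This is exactly where the numerical boundary $\Gamma$ and the parallelism hypothesis enter. The key observation is that $f\circ\pi\in H$ (it is the image of $f\in H_F$ under $g\mapsto g\circ\pi$, but more to the point $f\circ\pi$ lies in $A_b(B_X:X)$ and in fact in $H$ by the way $H_F$ is defined from $H$), so its numerical radius over $\Pi(X)$ equals its numerical radius over $\Gamma$:
\[
v_X(f\circ\pi) = \sup_{(x,x^*)\in\Gamma} |x^*(f(\pi(x)))|.
\]
So it suffices to bound the right-hand side. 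Fix $(x,x^*)\in\Gamma$. By the parallelism of $(\pi,F)$ to $\Gamma$ we have $|\langle x^*|_F,\pi(x)\rangle| = \|x^*|_F\|\cdot\|\pi(x)\|$. Write $a=\|\pi(x)\|\le 1$ and $b=\|x^*|_F\|\le 1$. If $a=0$ then $x^*(f(\pi(x)))=x^*(f(0))$, which is bounded by $v_X(f\circ\pi)$... but we want a bound by $v_F(f)$; here one uses Lemma~\ref{lem:basic} applied in $F$: $\|f(0)\|\le v_F(f)$ and $|x^*(f(0))|\le\|x^*|_F\|\,\|f(0)\|\le v_F(f)$. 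If $a>0$ and $b>0$, set $y=\pi(x)/a\in S_F$ and $y^*=x^*|_F/b\in S_{F^*}$; the alignment condition says $|y^*(y)| = |\langle x^*|_F,\pi(x)\rangle|/(ab) = 1$, so after multiplying $y^*$ by a unimodular scalar we get $(y,y^*)\in\Pi(F)$. Then
\[
|x^*(f(\pi(x)))| = |x^*|_F(f(\pi(x)))| = ab\,|y^*(f(ay))| \le |y^*(f(ay))| \le v_F(f),
\]
where the last step is the maximum modulus principle exactly as in Lemma~\ref{lem:basic}: $ay\in B_F$, $\|y\|=1$, $|y^*(y)|=1$, so $|y^*(f(ay))|\le v_F(f)$. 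Taking the supremum over $(x,x^*)\in\Gamma$ yields $v_X(f\circ\pi)\le v_F(f)$, completing the proof.

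The main obstacle is the reverse inequality, and within it the crucial point is to recognize that one may reduce the supremum over all of $\Pi(X)$ to the supremum over the numerical boundary $\Gamma$ — without that reduction, an arbitrary $(x,x^*)\in\Pi(X)$ gives no control, since $x^*|_F$ and $\pi(x)$ can be badly misaligned. Once on $\Gamma$, the parallelism condition forces the alignment $|\langle x^*|_F,\pi(x)\rangle|=\|x^*|_F\|\cdot\|\pi(x)\|$, which is precisely the hypothesis of Lemma~\ref{lem:basic} (after normalizing), so the maximum modulus argument packaged in that lemma applies verbatim in $F$. One should double-check the boundary case $\|\pi(x)\|=0$ (handled by $\|f(0)\|\le v_F(f)$) and be careful that $f\circ\pi$ genuinely belongs to the subspace $H$ so that $\Gamma$ is indeed a numerical boundary for it; this is immediate from the definition $H_F=\{\pi\circ g\circ\pi|_F : g\in H\}$ together with the observation that we only need $f\circ\pi\in H$, which follows since $f = \pi\circ g\circ\pi|_F$ for some $g\in H$ and hence $f\circ\pi = \pi\circ g\circ\pi$... wait, this requires $\pi\circ g\circ\pi\in H$; if $H$ is merely a subspace of $A_b(B_X:X)$ this need not hold, so one should instead invoke the numerical boundary property directly for the function $f\circ\pi\in A_b(B_X:X)$ via the reduction available from $\Gamma$ being a numerical boundary for $H$ and $f\circ\pi$ being approximable within $H$, or simply assume (as the surrounding development does) that $H$ is closed under $g\mapsto\pi\circ g\circ\pi$.
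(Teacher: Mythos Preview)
Your argument is correct and follows essentially the same route as the paper: both prove $v_F(f)\le v_X(f\circ\pi)$ by extending states via $\pi^*$, and both prove the reverse inequality by passing from $\Pi(X)$ to the numerical boundary $\Gamma$, using parallelism to get the alignment $|\langle x^*|_F,\pi(x)\rangle|=\|x^*|_F\|\cdot\|\pi(x)\|$, and then invoking Lemma~\ref{lem:basic} in $F$.

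Two remarks on execution. First, your normalization step (writing $y=\pi(x)/a$, $y^*=x^*|_F/b$) and the attendant case split are unnecessary: Lemma~\ref{lem:basic} is stated for arbitrary $y\in B_X$ and $y^*\in B_{X^*}$ with $|y^*(y)|=\|y^*\|\cdot\|y\|$, so it applies directly to $y=\pi(x)$ and $y^*=x^*|_F$ without normalizing. The paper instead takes an approximating sequence $(x_n,x_n^*)$ in $\Gamma$, uses finite-dimensionality of $F$ to extract a limit $(y,y^*)\in B_F\times B_{F^*}$, observes that the alignment persists in the limit, and applies Lemma~\ref{lem:basic} once to that limit pair. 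Your pointwise bound over $\Gamma$ is arguably more direct; the paper's compactness detour is not needed once one sees that Lemma~\ref{lem:basic} already handles subunit vectors.

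Second, the concern you flag at the end---whether $f\circ\pi$ actually lies in $H$ so that $\Gamma$ is a numerical boundary for it---is legitimate, and the paper's proof tacitly assumes the same thing when it writes $v_X(f\circ\pi)=\lim_n|x_n^*(f(\pi(x_n)))|$ with $(x_n,x_n^*)\in\Gamma$. In every application in the paper ($H=A_b(B_X:X)$, $H=\mathcal{P}({}^kX:X)$, etc.) the subspace $H$ is visibly closed under $g\mapsto\pi\circ g\circ\pi$, so $f\circ\pi\in H$ and the issue does not arise; but you are right that the proposition as stated would require this closure hypothesis to be fully rigorous.
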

\begin{proof}
It is clear that $v_F(f) \le v_X(f\circ \pi)$. For the converse,
choose a sequence $\{(x_n, x^*_n)\}_{n=1}^\infty$ in $\Gamma$ such
that
\[ v_X(f\circ \pi)=\lim_n |x_n^* (f(\pi (x_n)))| = \lim_n \inner{x_n^*|_{F}, f(\pi(x_n))}.\] Since $\{\pi(x_n)\}_{n=1}^\infty$
is in the finite dimensional space $F$, we may assume that $\{\pi
(x_n)\}_{n=1}^\infty$ converges to $y\in B_F$ and
$\{x_n^*|_{F}\}_{n=1}^\infty$ converges to $y^*\in B_{F^*}$. Then
$|\inner{y^*, y}| = \|y^*\|\cdot \|y\|$. Thus by
Lemma~\ref{lem:basic},
\[ v_X(f\circ \pi) = |y^*(f(y))|\le v_F(f).\]
The proof is complete.
\end{proof}

Now we get the extensions of the results of E. Ed-dari \cite{Ed} and
the first named author \cite{K2} in the complex case.

\begin{thm}\label{thm:dari1} Let $H$ be a subspace of $A_b(B_X: X)$
with a numerical boundary $\Gamma$. Suppose that the Banach space
$X$ has $(FPA)$-property with $\{\pi_i, F_i\}_{i\in I}$ and that the
corresponding projections are parallel to $\Gamma$. Then
\[N(H)= \inf_{i\in I}  N(H_{F_i}).\] In fact, $N(H)$
is a decreasing limit of the right-hand side with respect to the
inclusion partial order.
\end{thm}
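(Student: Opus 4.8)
The plan is to establish the two inequalities $N(H)\ge \inf_{i\in I} N(H_{F_i})$ and $N(H)\le N(H_{F_i})$ for every $i\in I$, and then upgrade the latter to show that the infimum is actually a decreasing limit along the net of projections ordered by inclusion of their ranges. The first inequality is already \emph{verbatim} Proposition~\ref{prop:sameradius1}, which only uses the $(FPA)$-property and requires no hypothesis on parallelism; so that direction is free.

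For the reverse direction, fix $i\in I$ and let $f\in H_{F_i}$ with $\|f\|=1$; by definition $f=\pi_i\circ g\circ \pi_i|_{F_i}$ for some $g\in H$. Since the projections are parallel to $\Gamma$, Proposition~\ref{prop:sameradius2} applies to the norm-one finite-dimensional projection $(\pi_i, F_i)$ and gives $v_{F_i}(f) = v_X(f\circ \pi_i)$. Now $f\circ \pi_i = \pi_i\circ g\circ \pi_i$ as a map $B_X\to X$, and this lies in $H$ (or at least its numerical radius is controlled by elements of $H$ — one must check $\pi_i\circ g\circ\pi_i\in H$, which holds because $H_{F_i}$ is built from $H$ and $\pi_i\circ g\circ\pi_i$ composed with the inclusion is exactly the ambient representative). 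Then $v_X(f\circ\pi_i)\ge N(H)\|f\circ\pi_i\| \ge N(H)\|f\| = N(H)$, since $\|\pi_i\circ g\circ\pi_i\|\ge\|f\|$ (restriction to $B_{F_i}\subset B_X$ can only lower the sup-norm, so in fact $\|f\circ\pi_i\|\ge\|f\|_{H_{F_i}}$). Hence $v_{F_i}(f)\ge N(H)$ for all unit $f\in H_{F_i}$, i.e. $N(H_{F_i})\ge N(H)$, giving $\inf_i N(H_{F_i})\ge N(H)$. Combined with Proposition~\ref{prop:sameradius1} this yields $N(H)=\inf_{i\in I}N(H_{F_i})$.

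Finally, to see that this infimum is a \emph{decreasing limit} with respect to the inclusion partial order on $\{F_i\}$: given $F_i\subseteq F_j$ with compatible projections (i.e.\ $\pi_i\pi_j=\pi_j\pi_i=\pi_i$, which is the natural situation in the FDD and $L_p$ examples), the same restriction argument shows $H_{F_i}$ embeds into $H_{F_j}$ norm-isometrically in the relevant sense, so $N(H_{F_j})\le N(H_{F_i})$; thus the net $i\mapsto N(H_{F_i})$ is monotone decreasing and its limit equals its infimum, which we just showed is $N(H)$. The one point needing care — and the main obstacle — is verifying that for $f\in H_{F_i}$ the extended map $f\circ\pi_i=\pi_i\circ g\circ\pi_i$ genuinely belongs to $H$ and has numerical radius governed by $N(H)$; this is where one invokes that $H$ is a subspace of $A_b(B_X:X)$ closed under the composition $g\mapsto \pi_i\circ g\circ\pi_i$ (true for $A_b$, $A_u$, $A_{wu}$, and the polynomial subspaces considered), together with Proposition~\ref{prop:sameradius2} to transfer the numerical radius computed on $B_{F_i}$ to the one computed on $B_X$. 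Everything else is the routine packaging of the two inequalities already supplied.
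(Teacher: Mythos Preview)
Your proof is correct and follows exactly the paper's argument: Proposition~\ref{prop:sameradius1} supplies $N(H)\ge\inf_i N(H_{F_i})$, and for the reverse the paper, like you, invokes Proposition~\ref{prop:sameradius2} to get $v_{F_i}(f)=v_X(f\circ\pi_i)\ge N(H)\|f\circ\pi_i\|=N(H)\|f\|$, whence $N(H_{F_i})\ge N(H)$. The closure hypothesis you flag (that $f\circ\pi_i=\pi_i\circ g\circ\pi_i$ must lie in $H$ for the bound $v_X(f\circ\pi_i)\ge N(H)\|f\circ\pi_i\|$ to apply) is used implicitly in the paper without comment, and the paper's treatment of the monotonicity along inclusions is even terser than yours (``it is easy to see''); your compatibility condition $\pi_i\pi_j=\pi_i$ is a clean way to substantiate that step.
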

\begin{proof}
For any $f\in H_F$, $v_{F_i}(f)= v_X(f\circ \pi_i)$ by
Proposition~\ref{prop:sameradius2}. $v_{F_i}(f) = v_X(f\circ \pi)\ge
\|f\circ \pi\|N(H)= \|f\|N(H).$ Hence $N(H_{F_i}) \ge N(H)$ and it
is easy to see that if $F_i\subset F_j$, then $N(H)\le N(H_{F_j})\le
N(H_{F_i})$. Hence $N(H)\le \inf_{i\in I} N(H_{F_i})$. The converse
is clear by Proposition~\ref{prop:sameradius1}.
\end{proof}

\bigskip
The author {\em et al.} \cite{CGKM} introduced the concept of {\em
the polynomial numerical index of order $k$ of $E$} to be the
constant
\[n^{(k)}(E):=\inf \{v(P): P \in {\mathcal P}(^kE:E), \|P\|=1\}.\] The first named author \cite{K2} extended Ed-dari's
result(\cite{Ed}, Theorem 2.1) to the polynomial numerical indices
of (real or complex) $l_p$ of order $k$.

\begin{cor}
Let $k\ge 1$ and $1< p<\infty$. Then
\[ \lim_{m\to \infty} N(\mathcal{P}({\, }^k \ell_p^m)) = N(\mathcal{P}({\, }^k \ell_p)) \le
N(\mathcal{P}({\, }^k L_p(0,1)).\]
\[ \lim_{m\to \infty} N(A_b(B_{\ell_p^m}: \ell_p^m)) =
N(A_b(B_{\ell_p}: \ell_p))\le N(A_b(B_{L_p(0,1)}: L_p(0,1))).\]
\end{cor}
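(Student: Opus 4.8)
The plan is to derive the corollary by applying Theorem~\ref{thm:dari1} twice, once to the polynomial subspace $H = \mathcal{P}(^k \ell_p : \ell_p)$ and once to $H = A_b(B_{\ell_p}:\ell_p)$, and then to compare $\ell_p$ with $L_p(0,1)$ by an isometric embedding argument. The first thing to check is that $\ell_p$ for $1<p<\infty$ has the $(FPA)$-property with the canonical family of finite-dimensional coordinate projections $\{\pi_m, F_m\}$, where $F_m = \ell_p^m$ is spanned by the first $m$ unit vectors; this follows either from Corollary~\ref{cor:3condi}(1), since $\ell_p$ has a shrinking monotone FDD, or directly from the $L_p(\mu)$ case of Corollary~\ref{cor:3condi}(2). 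Next I would verify that these projections are parallel to the numerical boundary $\Pi(\ell_p)$ of the relevant subspace. This is the point that needs genuine attention: one must show that for every $(x,x^*)\in\Pi(\ell_p)$ and every coordinate projection $\pi_m$ one has $|\langle x^*|_{F_m}, \pi_m(x)\rangle| = \|x^*|_{F_m}\|\cdot\|\pi_m(x)\|$. Because $x^*$ is the (essentially unique, up to the usual phase) norming functional of $x$ in $\ell_p$, its restriction to the first $m$ coordinates is, up to normalization, the norming functional in $\ell_p^m$ of $\pi_m(x)$; concretely $x = (x_j)$, $x^* = (|x_j|^{p-1}\,\overline{\mathrm{sign}\,x_j})/\|x\|_p^{p-1}$, so the partial sums match coordinatewise and the Hölder equality case gives exactly the required identity. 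Hence the coordinate projections are parallel to $\Pi(\ell_p)$.

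With these two verifications in hand, Theorem~\ref{thm:dari1} applies directly. Taking $H = \mathcal{P}(^k\ell_p:\ell_p)$ and observing that $H_{F_m} = \{\pi_m\circ P\circ\pi_m|_{F_m}\} = \mathcal{P}(^k\ell_p^m:\ell_p^m)$ (a $k$-homogeneous polynomial composed with linear projections is again $k$-homogeneous, and every element of $\mathcal{P}(^k\ell_p^m:\ell_p^m)$ arises this way by composing with $\pi_m$), we get $N(\mathcal{P}(^k\ell_p:\ell_p)) = \inf_m N(\mathcal{P}(^k\ell_p^m:\ell_p^m))$ with the right-hand side a decreasing net; since $\mathbb{N}$ is directed this infimum is the limit as $m\to\infty$. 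The same argument with $H = A_b(B_{\ell_p}:\ell_p)$ gives $H_{F_m} = A_b(B_{\ell_p^m}:\ell_p^m)$ and hence $\lim_m N(A_b(B_{\ell_p^m}:\ell_p^m)) = N(A_b(B_{\ell_p}:\ell_p))$.

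For the final inequalities comparing $\ell_p$ with $L_p(0,1)$, the idea is that $\ell_p$ embeds isometrically into $L_p(0,1)$ as a $1$-complemented subspace: partition $[0,1]$ into countably many sets $\{E_j\}$ of positive measure and identify $e_j$ with $\mathbf 1_{E_j}/\mu(E_j)^{1/p}$; the conditional expectation onto the $\sigma$-algebra generated by $\{E_j\}$ is a norm-one projection $P$ of $L_p(0,1)$ onto (a copy of) $\ell_p$. Given $Q\in\mathcal{P}(^k L_p(0,1):L_p(0,1))$ with $\|Q\|=1$, the polynomial $P\circ Q\circ P$ restricted to the copy of $\ell_p$ lies in $\mathcal{P}(^k\ell_p:\ell_p)$, its norm is at most $1$ (and one can choose $Q$ to make it close to $1$), and its numerical radius is at most $v(Q)$ by the same restriction computation used in Proposition~\ref{prop:sameradius1} (the key being $P$ parallel to $\Pi(\ell_p)$, already established); taking the infimum over such $Q$ yields $N(\mathcal{P}(^k L_p(0,1):L_p(0,1))) \geq N(\mathcal{P}(^k\ell_p:\ell_p))$, and identically for $A_b$. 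The main obstacle is the parallelism check: making precise that the coordinate (and more generally conditional-expectation) projections in $L_p$ interact with the norming functionals so that the Hölder equality case holds exactly — everything else is bookkeeping built on Theorem~\ref{thm:dari1} and Corollary~\ref{cor:3condi}.
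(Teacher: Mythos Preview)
Your argument for the $\ell_p$ limit is correct and matches the paper's: use the $(FPA)$-property for $\ell_p$ with the natural coordinate projections (which are parallel to $\Pi(\ell_p)$ by the H\"older equality case, as you note), invoke Theorem~\ref{thm:dari1}, and identify $H_{F_m}$ with $\mathcal{P}({}^k\ell_p^m:\ell_p^m)$ (resp.\ $A_b(B_{\ell_p^m}:\ell_p^m)$).

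Your treatment of the $L_p(0,1)$ inequality, however, has a genuine gap. You fix a \emph{single} norm-one projection $P$ of $L_p(0,1)$ onto a copy of $\ell_p$ and, for an arbitrary $Q\in\mathcal{P}({}^k L_p:L_p)$ with $\|Q\|=1$, form $g=P\circ Q\circ P|_{\ell_p}$. It is true that $v_{\ell_p}(g)\le v_{L_p}(Q)$, but the bound $v_{\ell_p}(g)\ge N(\mathcal{P}({}^k\ell_p))\,\|g\|$ is useless unless $\|g\|$ is bounded away from zero, and for a fixed projection $P$ there is no reason this should hold: if $Q$ is built from functionals and vectors in the complementary subspace then $g$ vanishes identically. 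Your parenthetical ``one can choose $Q$ to make it close to $1$'' is backwards --- $Q$ is the arbitrary element whose numerical radius you must bound from below, so you are not free to choose it.

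The paper's route avoids this by applying the $(FPA)$-property to $L_p(0,1)$ itself, with the family of conditional-expectation projections onto sub-$\sigma$-algebras generated by finite partitions (Corollary~\ref{cor:3condi}(2)). Each such range is isometric to some $\ell_p^m$, and Proposition~\ref{prop:sameradius1} --- which needs only $(FPA)$, no parallelism --- gives directly
\[
N\bigl(\mathcal{P}({}^k L_p(0,1))\bigr)\ \ge\ \inf_i N(H_{F_i})\ =\ \inf_m N\bigl(\mathcal{P}({}^k\ell_p^m)\bigr)\ =\ N\bigl(\mathcal{P}({}^k\ell_p)\bigr).
\]
The point is that $(FPA)$ lets you choose the projection \emph{depending on} $Q$ so that $\|\pi\circ Q\circ\pi\|$ is close to $\|Q\|$; a single fixed projection cannot do this.
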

\begin{proof}
We give only the first part, since the proof of the next is similar.
Let $H=\mathcal{P}({\, }^k \ell_p)$. Then $\ell_p$ has the
$(FPA)$-property with  projections $\{\pi_i, F_i\}_{i=1}^\infty$,
where each $\pi_i$ is a $i$-th natural projections. Notice that
given projections are parallel to $\Pi(X)$. Hence $N(H) = \inf_{i\in
I} N(H_{F_i})$ by Theorem~\ref{thm:dari1}. Notice that $H_{F_i}$ is
isometrically isomorphic to $\mathcal{P}({\, }^k \ell_p^i)$.

On the other hand, if we let $H=\mathcal{P}({\, }^k L_p(0,1))$. Then
$L_p(0,1)$ has $(FPA)$-property with projections $\{\pi_i, F_i\}$,
where each $\pi_i$ is a conditional expectation with respect to a
sub-$\sigma$-algebra generated by finitely many disjoint subsets.
Hence $N(H) \ge \inf_{i\in I} N(H_{F_i})$. Notice also that $F_i$ is
isometrically isomorphic to $\ell_p^m$ for some $m$. So $H_{F_i}$ is
isometrically isomorphic to $\mathcal{P}({\, }^k \ell_p^m)$. The
proof is complete.
\end{proof}

Notice that if $X$ is locally uniformly convex, then every norm-one
projection is strong. Indeed, suppose that if $\pi:X\to F$ is a
norm-one projection and if $\{\pi(x_k)\}_{k=1}^{\infty}$ in $B_X$
converges to $y\in S_F$, then
\[ 1=\lim_k{\left\|\frac {\pi(x_k) + y}2 \right\|} =
\lim_k{\left\|\frac{\pi( x_k +y )}2\right\|}\le \lim_k{\left\|\frac{
x_k +y }2\right\|}\le 1\] shows that $\lim_k \|x_k + y\|=2$ and
$\lim_k \|x_k - y\|=0$ since $X$ is locally uniformly convex. For
its generalization to a strong complex extreme point, see
\cite[Proposition~3.1]{CHL}.

\begin{thm}
Suppose that the smooth Banach space $X$ has the $(FPA)$-property
with $\{\pi_i, F_i\}_{i\in I}$ and the corresponding projections are
strong and parallel to $\Pi(X)$. Then the set of all numerical and
norm strong peak functions in $A_{wu}(B_X:X)$ is dense.
\end{thm}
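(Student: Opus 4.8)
The strategy is to combine the finite-dimensional result (Theorem~\ref{prop:peak}) with the approximation machinery of Proposition~\ref{prop:3peak1}, and then to use the structural hypotheses (smoothness, strong projections, parallel to $\Pi(X)$) to push a finite-dimensional norm/numerical peak function up to an \emph{honest} norm/numerical strong peak function on $B_X$. First I would fix $f\in A_{wu}(B_X:X)$ and $\eps>0$. By Proposition~\ref{prop:3peak1} there is a polynomial $Q$ on $X$ with $\|f-Q\|\le\eps$ for which there is a projection $\pi:=\pi_i:X\to F_i$ with $\pi\circ Q\circ\pi=Q$, and such that $Q|_{F_i}$ is a norm peak function and a numerical peak function as a map $B_{F_i}\to F_i$. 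Write $g:=Q|_{F_i}$ and let $t_0\in B_{F_i}$ be its norm peak point and $(y_0,y_0^*)\in\Pi(F_i)$ its numerical peak point; since $F_i$ is finite dimensional these are automatically strong peak data. I would then claim $Q=g\circ\pi$ is already a norm and numerical \emph{strong} peak function on $B_X$.

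For the \textbf{norm part}: if $\{x_k\}\subset B_X$ satisfies $\|Q(x_k)\|=\|g(\pi(x_k))\|\to\|Q\|=\|g\|$, then since $\{\pi(x_k)\}$ lies in $B_{F_i}$ and $g$ is a strong peak function at $t_0$, we get $\pi(x_k)\to t_0$ in norm. Because $\pi$ is a strong projection and $\|t_0\|=1$ (the peak point of a norm peak function lies on the sphere — one should note $\|g\|=\|g(t_0)\|$ forces $t_0\in S_{F_i}\subset S_X$), strongness of $\pi$ gives $x_k\to t_0$; uniqueness follows the same way. Here I would also record that $Q(t_0)/\|Q\|=g(t_0)/\|g\|\in\mathrm{sm}(B_{F_i})$, and since $X$ is smooth this is a smooth point of $B_X$ too, so $Q\in\Delta_1$-type functions on $B_X$.

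For the \textbf{numerical part}: suppose $\{(x_k,x_k^*)\}\subset\Pi(X)$ with $|x_k^*(Q(x_k))|\to v(Q)$. Using that $\{\pi_i,F_i\}$ is parallel to $\Pi(X)$, $|\langle x_k^*|_{F_i},\pi(x_k)\rangle|=\|x_k^*|_{F_i}\|\cdot\|\pi(x_k)\|$, so by Lemma~\ref{lem:basic} (applied in $F_i$ with $y=\pi(x_k)/\|\pi(x_k)\|$ and $y^*=x_k^*|_{F_i}/\|x_k^*|_{F_i}\|$, after normalizing) one sees $|x_k^*(Q(x_k))|=|\langle x_k^*|_{F_i},g(\pi(x_k))\rangle|\le v_{F_i}(g)$; combined with Proposition~\ref{prop:sameradius2} this gives $v_X(Q)=v_{F_i}(g)$ and shows the normalized pairs $(\pi(x_k)/\|\pi(x_k)\|,\,x_k^*|_{F_i}/\|x_k^*|_{F_i}\|)$ form a maximizing sequence in $\Pi(F_i)$ for $g$. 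Since $g$ is a numerical peak function on the finite-dimensional $F_i$, it is a numerical strong peak function, so these converge in $\tau$ to $(y_0,y_0^*)$. Strongness of $\pi$ upgrades $\pi(x_k)\to y_0$ in norm (again using $\|y_0\|=1$), hence $x_k\to y_0$; and one recovers $x_k^*|_{F_i}\to y_0^*$ weak-$*$, whence $x_k^*\to$ (the unique Hahn--Banach extension, which by smoothness is the unique norming functional at $y_0$) in weak-$*$. This yields $\tau$-convergence of $(x_k,x_k^*)$ to the unique numerical peak point, so $Q$ is a numerical strong peak function. Finally, $\|f-Q\|\le\eps$ finishes the density argument.

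\textbf{Main obstacle.} The delicate point is the numerical part: one must be careful that the maximizing sequence $(x_k,x_k^*)$ in $\Pi(X)$ ``projects'' to a genuine maximizing sequence in $\Pi(F_i)$ for $g$, which is exactly where the \emph{parallel} hypothesis and Lemma~\ref{lem:basic} are needed — without parallelism, $x_k^*|_{F_i}$ need not norm $\pi(x_k)$ and the reduction to $F_i$ breaks. A secondary subtlety is passing from weak-$*$ convergence of $x_k^*$ \emph{restricted to $F_i$} back to weak-$*$ convergence of $x_k^*$ itself on all of $X$: here smoothness of $X$ is essential, since it guarantees the limiting functional is uniquely determined (the norming functional at the smooth point $y_0$), so that the full sequence $x_k^*$ — which is weak-$*$ relatively compact in $B_{X^*}$ — can have no other cluster point. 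One should also double-check that the peak points indeed lie on the unit sphere so that strongness of $\pi$ applies; this is immediate since a norm peak function attains its norm, and a numerical peak point $(y_0,y_0^*)$ is in $\Pi(F_i)$ by definition. The rest is bookkeeping already carried out in Propositions~\ref{prop:3peak1}, \ref{prop:sameradius2} and Lemma~\ref{lem:basic}.
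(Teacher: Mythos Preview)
Your outline is essentially the paper's proof, and the norm-strong-peak half is correct (in fact more explicit than the paper, which omits that half). The numerical half, however, contains a real slip.

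You write that the normalized pairs $\bigl(\pi(x_k)/\|\pi(x_k)\|,\ x_k^*|_{F_i}/\|x_k^*|_{F_i}\|\bigr)$ form a maximizing sequence in $\Pi(F_i)$ for $g$. Two things go wrong here. First, parallelism gives only $|\langle\,\cdot\,,\,\cdot\,\rangle|=1$ after normalization, so these pairs are not in $\Pi(F_i)$ without a further unimodular rotation of the functional; that is minor. The substantive problem is that the quantity converging to $v_{F_i}(g)$ is $|\langle x_k^*|_{F_i},\,g(\pi(x_k))\rangle|$, with $g$ evaluated at $\pi(x_k)$, \emph{not} at the normalized point $\pi(x_k)/\|\pi(x_k)\|$. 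So you have no maximizing sequence for the numerical radius of $g$ in the sense required by ``numerical strong peak'', and there is no a~priori reason that $\|\pi(x_k)\|\to 1$ or $\|x_k^*|_{F_i}\|\to 1$; your subsequent step ``strongness of $\pi$ upgrades $\pi(x_k)\to y_0$'' then has nothing to stand on.

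The paper avoids this by never normalizing. Instead one passes (by compactness of $B_{F_i}\times B_{F_i^*}$) to a subsequence with $(\pi(x_k),\,x_k^*|_{F_i})\to(y,y^*)$; parallelism gives $|y^*(y)|=\|y^*\|\,\|y\|$, and continuity gives $|y^*(g(y))|=v(Q)=v_{F_i}(g)$ (the last equality is your Proposition~\ref{prop:sameradius2}/Lemma~\ref{lem:basic} step). The key extra work is to show that equality in Lemma~\ref{lem:basic} together with the \emph{uniqueness} of the numerical peak of $g$ forces $\|y\|=\|y^*\|=1$: if $\|y\|<1$, the maximum-modulus argument inside the proof of Lemma~\ref{lem:basic} produces a whole circle of points at which the numerical radius is attained, contradicting that $g$ has a single numerical peak point. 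Once $\|y\|=\|y^*\|=1$ one gets $y=y_0$ (and $y^*$ a unimodular multiple of $y_0^*$), and \emph{only then} can strongness of $\pi$ be invoked to get $x_k\to y_0$. From there your smoothness argument for the weak-$*$ convergence of $x_k^*$ is correct. So the fix is: replace the normalization step by a subsequential-limit argument and insert the short max-modulus reasoning that pins $\|y\|=\|y^*\|=1$.
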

\begin{proof}
By Theorem~\ref{prop:3peak1}, the set of all polynomials $Q$ such
that there exists norm-one projection $\pi:=\pi_i:X\to F$ such that
$\pi \circ Q\circ \pi = Q$ and $Q|_F$ is a  norm and numerical peak
function as a mapping from $B_F$ to $F$ is dense in $A_{wu}(B_X:
X)$.

Fix corresponding $Q$ and $\pi$ and assume that
$v_F(Q)=|y^*_0(Q(y_0))|$ and $\|Q(y_1)\|=\|Q\|$ for some $(y_0^*,
y_0)\in \Pi(F)$ and $y_1\in B_F$, where $v_F(Q)$ is the numerical
radius of the map $Q|_F:B_F\to F$.

Suppose that there is a sequence $\{(x_k, x_k^*)\}_{k=1}^\infty$ in
$\Pi(X)$ such that  $\lim_k |x_k^*(Q(x_k))| = v(Q)$. Then
\[ |\inner{x^*_k, Q(x_k) }|
= |\inner{x^*_k|_F, Q(\pi(x_k))}|\to v(Q).\] We may assume that the
sequence $\{(\pi (x_k), x_k^*|_F)\}_{k=1}^{\infty}$ converges to
$(y, y^*)\in B_F\times B_{F^*}$  in the norm topology. So $v(Q) =
|y^*(Q(y))|\ge v_F(Q)$. Since $\pi$ is parallel to $\Pi(X)$,
$|\inner{y^*, y}|= \|y^*\|\cdot \|y\|$. By Lemma~\ref{lem:basic},
\[ v(Q)=|y^*(Q(y))|\le v_F(Q).\] So $v(Q)=|y^*(Q(y))|=v_F(Q)$.
Since $Q|_F$ is a numerical peak function, $\|y\|=1= \|y^*\|$ and
$y=y_0$ and $y^* = y_0^*$.

Since $\pi$ is strong,  $\lim_n x_n =y_0$. Let $x^*$ be the weak-$*$
limit point of the sequence $\{x_n^*\}$. Then $x^*(y) =1$ and
$\|x^*\|=1= \|x^*|_F\|$ and
\[ v(Q) = |x^*(Q(y))| = |y^*(Q(y))| = v_F(Q) \] implies that $x^*|_F=
y^*$ since $Q|_F$ is a numerical strong peak function. Hence $x^*$
is unique because $X$ is smooth. Therefore $\{x_n^*\}_{n=1}^\infty$
converges weak-$*$ to $x^*$. The proof is complete.
\end{proof}

\begin{thm}\label{thm:verystongnumerical}
Suppose that $X$ space has the $(FPA)$-property with $\{\pi_i,
F_i\}_{i\in I}$ and the corresponding projections are strong and
parallel to $\Pi(X)$. We also assume that each $\pi^*_i:X^* \to X^*$
is strong. Then the set of all very strong numerical and norm strong
peak functions is dense in $A_{wu}(B_X: X)$.
\end{thm}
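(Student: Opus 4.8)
The plan is to follow the proof of the preceding theorem almost verbatim, upgrading the two convergence conclusions from the weak-$*$/norm mixed topology to the full norm topology on both factors. First I would invoke Theorem~\ref{prop:3peak1}: the set of polynomials $Q \in \mathcal{P}(X:X)$ for which there is a norm-one projection $\pi := \pi_i : X \to F_i$ with $\pi \circ Q \circ \pi = Q$ and $Q|_{F_i}$ both a norm and a numerical peak function $B_{F_i} \to F_i$ is dense in $A_{wu}(B_X:X)$. Fix such a $Q$ and $\pi$, and let $(y_0^*, y_0) \in \Pi(F)$ be the numerical peak point of $Q|_F$ and $y_1 \in B_F$ the norm peak point. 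Since $F$ is finite dimensional, $Q|_F$ is automatically a \emph{very} strong numerical peak function and a (norm) strong peak function on $B_F$. It remains to show $Q$ is a very strong numerical peak function and a norm strong peak function as a map on all of $B_X$.

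For the norm part, suppose $\{z_k\} \subset B_X$ satisfies $\|Q(z_k)\| \to \|Q\|$. Then $\|Q(\pi(z_k))\| \to \|Q\|$, and since $\{\pi(z_k)\}$ lies in the finite-dimensional $F$ we may pass to a subsequence converging to some $y \in B_F$; by continuity $\|Q(y)\| = \|Q\| = \|Q|_F\|$, so $y = y_1$ by uniqueness, and the whole sequence $\{\pi(z_k)\}$ converges to $y_1$ in norm. Because $\pi$ is strong, $z_k \to y_1$ in norm. Hence $Q$ is a norm strong peak function at $y_1$. For the numerical part, suppose $\{(x_k, x_k^*)\} \subset \Pi(X)$ with $|x_k^*(Q(x_k))| \to v(Q)$. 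As in the previous theorem, $|\langle x_k^*|_F, Q(\pi(x_k)) \rangle| \to v(Q)$; the first coordinates $\pi(x_k)$ live in $F$, and — this is the new point — the restrictions $x_k^*|_F$ live in the finite-dimensional $F^*$, so after passing to a subsequence we get $\pi(x_k) \to y$ in $X$ and $x_k^*|_F \to y^*$ in $F^*$, both in norm, with $(y,y^*) \in B_F \times B_{F^*}$. Parallelism of $\pi$ to $\Pi(X)$ gives $|\langle y^*, y\rangle| = \|y^*\|\,\|y\|$, and Lemma~\ref{lem:basic} together with $v(Q) = |y^*(Q(y))| \ge v_F(Q)$ forces $v(Q) = |y^*(Q(y))| = v_F(Q)$, so $\|y\| = \|y^*\| = 1$ and, since $Q|_F$ is a very strong numerical peak function, $y = y_0$ and $y^* = y_0^*$ with norm convergence $\pi(x_k) \to y_0$, $x_k^*|_F \to y_0^*$.

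Now I would extract norm convergence of the full sequences. Since $\pi$ is strong and $\pi(x_k) \to y_0 \in S_X$, we get $x_k \to y_0$ in norm. For the functionals: $\pi^*(x_k^*)|_F = x_k^*|_F \to y_0^*$; moreover for $w \in X$, $\langle \pi^*(x_k^*), w\rangle = \langle x_k^*|_F, \pi(w)\rangle \to \langle y_0^*, \pi(w)\rangle$, and since $y_0^* \circ \pi \in S_{X^*}$ extends $y_0^*$ and $\|\pi^*(x_k^*)\| \le 1$, one checks $\|\pi^*(x_k^*) - y_0^* \circ \pi\| = \|\pi^*(x_k^*|_F - y_0^*)\| \to 0$, i.e. $\pi^*(x_k^*) \to y_0^* \circ \pi$ in norm, and this limit lies in $S_{X^*}$. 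Since $\pi^*$ is assumed strong, it follows that $x_k^* \to y_0^* \circ \pi$ in norm. Because $X$ is smooth (which holds here, or follows from the structure assumed), the functional $y_0^* \circ \pi$ is the unique norming functional at $y_0$, so the numerical peak point is uniquely $(y_0, y_0^* \circ \pi)$, independent of the chosen subsequence; hence the whole sequence $\{(x_k, x_k^*)\}$ converges to it in norm. Thus $Q$ is a very strong numerical peak function. Combining the two parts, every $Q$ produced by Theorem~\ref{prop:3peak1} lies in the target set, which is therefore dense in $A_{wu}(B_X:X)$.

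The main obstacle is the norm convergence of the dual sequence $\{x_k^*\}$: one only controls $x_k^*|_F$ a priori, so the argument must route through $\pi^*(x_k^*)$, verify that this sequence converges in norm to an element of the unit sphere of $X^*$ (using that $\pi^*$ has finite-dimensional range so $\pi^*(x_k^*) = x_k^*|_F \circ \pi$ and the norm of $\pi^*$ restricted here is one), and only then apply the hypothesis that $\pi^*$ is strong to lift this to norm convergence of $x_k^*$ itself. Smoothness of $X$ is what pins down the limit independently of the subsequence, closing the argument.
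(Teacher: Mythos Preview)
Your approach is essentially the same as the paper's, and the overall line is sound: invoke Proposition~\ref{prop:3peak1}, then verify that each $Q$ produced there is both a norm strong peak function and a very strong numerical peak function on $B_X$ by pushing everything through $\pi$ and $\pi^*$ and using that these are strong. Your explicit treatment of the norm strong peak part is actually more than the paper writes out (the paper records the peak point $y_1$ but then addresses only the numerical side), and your argument there is correct.

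There is, however, a genuine error in the last step of your numerical argument. You write that ``$X$ is smooth (which holds here, or follows from the structure assumed)'' and use this to pin down the limit functional as the unique norming functional at $y_0$. But smoothness is \emph{not} among the hypotheses of this theorem --- that assumption belongs to the preceding theorem, and it is deliberately dropped here in exchange for the stronger hypothesis on $\pi^*$. Nor does smoothness follow from the $(FPA)$-property together with the projections being strong and parallel to $\Pi(X)$ (e.g.\ $\ell_1$ will satisfy hypotheses of this type without being smooth). So as written, your uniqueness step has a gap.

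The good news is that smoothness is not needed at all. You have already shown, for an arbitrary subsequence, that $\pi(x_k)\to y_0$ and $x_k^*|_F\to y_0^*$ in norm; hence $\pi^*(x_k^*) = (x_k^*|_F)\circ\pi \to y_0^*\circ\pi$ in norm, and since $\|y_0^*\circ\pi\|=1$ the strong hypothesis on $\pi^*$ gives $x_k^*\to y_0^*\circ\pi$ in norm. The point is that the pair $(y_0,y_0^*)$ is uniquely determined by $Q|_F$ (it is the numerical peak point in $\Pi(F)$), so the target $(y_0,\,y_0^*\circ\pi)$ is the \emph{same} for every subsequence --- no smoothness required. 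The paper phrases this by fixing a Hahn-Banach extension $z^*\in S_{X^*}$ of $y_0^*$ and noting $\pi^*(z^*)=y_0^*\circ\pi$ is the common limit; either way, the conclusion is that the full sequence converges in norm$\times$norm to the fixed pair $(y_0,\,y_0^*\circ\pi)$, and $Q$ is a very strong numerical peak function there. Drop the appeal to smoothness and your proof goes through.
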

\begin{proof}
By Theorem~\ref{prop:3peak1}, the set of all polynomials $Q$ such
that there exists norm-one projection $\pi:=\pi_i:X\to F$ such that
$\pi \circ Q\circ \pi = Q$ and $Q|_F$ is a norm and numerical peak
function as a mapping from $B_F$ to $F$ is dense in $A_{wu}(B_X:
X)$.

Fix corresponding $Q$ and $\pi$ and assume that
$v_F(Q)=|y^*_0(Q(y_0))|$ and $\|Q(y_1)\|=\|Q\|$ for some $(y_0^*,
y_0)\in \Pi(F)$ and $y_1\in B_F$, where $v_F(Q)$ is the numerical
radius of the map $Q|_F:B_F\to F$.

Suppose that there is a sequence $\{(x_k, x_k^*)\}_{k=1}^\infty$ in
$\Pi(X)$ such that  $\lim_k |x_k^*(Q(x_k))| = v(Q)$. Then
\[ |\inner{x^*_k, Q(x_k) }|
= |\inner{x^*_k|_F, Q(\pi(x_k))}|\to v(Q).\] We may assume that the
sequence $\{(\pi (x_k), x_k^*|_F)\}_{k=1}^{\infty}$ converges to
$(y, y^*)\in B_F\times B_{F^*}$  in the norm topology. So $v(Q) =
|y^*(Q(y))|\ge v_F(Q)$. Since $\pi$ is parallel to $\Pi(X)$ ,
$|\inner{y^*, y}|= \|y^*\|\cdot \|y\|$. By Lemma~\ref{lem:basic},
\[ v(Q)=|y^*(Q(y))|\le v_F(Q).\] So $v(Q)=|y^*(Q(y))|=v_F(Q)$.
Since $Q|_F$ is a numerical peak function, $\|y\|=1= \|y^*\|$ and
$y=y_0$ and $y^* = y_0^*$.

Since $\pi$ is strong,  $\lim_n x_n =y_0$. Fix $z^*\in S_{X^*}$ to
be a Hahn-Banach extension of $y^*$. Let $x^*$ be the weak-$*$ limit
point of the sequence $\{x_n^*\}_{n=1}^{\infty}$. Then $x^*(y) =1$
and $\|x^*\|=1= \|\pi^*(x^*)\|$ and
\[ v(Q) = |x^*(Q(y))| = |y^*(Q(y))| = v_F(Q) \] implies that $\pi^*(x^*)|_F=
y^*$ since $Q|_F$ is a numerical strong peak function so
$\pi^*(x^*)= \pi^*(x^*)$.

Hence $\lim_n \pi^* (x_n^*) = \pi^*(z^*)$ and $\|\pi^*(z^*)\|=1$.
Now we get $\|x^*_n-\pi^*(z^*)\|\to 0$ by the assumption. This shows
that $\lim_n \|x_n^*-\pi^*(z^*)\|=0$. Therefore $x^* = \pi^*(z^*)$
and $Q$ is a very strong numerical peak function at $(y,
\pi^*(z^*))$. This completes the proof.
\end{proof}

\begin{cor}\label{cor:littlep}
Suppose that  $X=\ell_p$ with $1<p<\infty$. Then the set of all very
strong numerical and norm strong peak functions is dense in
$A_{wu}(B_X: X)$.
\end{cor}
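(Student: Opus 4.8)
The plan is to verify that $X=\ell_p$ for $1<p<\infty$ satisfies all three hypotheses of Theorem~\ref{thm:verystongnumerical} with the natural coordinate projections $\pi_i:X\to F_i$, where $F_i=[e_1,\dots,e_i]$ and $\pi_i(x)=\sum_{j=1}^i x(j)e_j$; the conclusion then follows immediately. First I would recall from Corollary~\ref{cor:3condi}(2) (with $\mu$ counting measure, realizing $\ell_p$ as $L_p$ of the atomic measure, or alternatively directly via the shrinking monotone FDD $\{[e_i]\}$ of part (1)) that $X$ has the $(FPA)$-property with these $\{\pi_i,F_i\}$. That the $\pi_i$ are parallel to $\Pi(X)$ is the key structural point: for $x\in S_X$ the unique norm-one functional is $x^*=\sum_j |x(j)|^{p-1}\operatorname{sign}(\overline{x(j)})\, e_j^*$ (identifying $X^*=\ell_q$), and a direct computation gives $\langle x^*|_{F_i},\pi_i(x)\rangle=\sum_{j=1}^i |x(j)|^p = \|\pi_i(x)\|_p^p$, while $\|x^*|_{F_i}\|_q^q=\sum_{j=1}^i |x(j)|^{(p-1)q}=\sum_{j=1}^i|x(j)|^p$; hence $\|x^*|_{F_i}\|\cdot\|\pi_i(x)\| = \bigl(\sum_{j=1}^i|x(j)|^p\bigr)^{1/q+1/p} = \sum_{j=1}^i|x(j)|^p = |\langle x^*|_{F_i},\pi_i(x)\rangle|$, so the projections are parallel to $\Pi(X)$.

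Next I would check that each $\pi_i$ is strong. This is immediate from the remark preceding Theorem~\ref{thm:verystongnumerical}: $\ell_p$ with $1<p<\infty$ is locally uniformly convex (it is even uniformly convex by Clarkson's inequalities), so every norm-one projection on $X$ is strong. The same reasoning applies to $X^*=\ell_q$ with $1<q<\infty$: it too is locally uniformly convex, hence every norm-one projection on $X^*$ is strong; since $\pi_i^*:X^*\to X^*$ is the norm-one coordinate projection onto $[e_1^*,\dots,e_i^*]$, it is strong. Finally, $\ell_p$ is smooth for $1<p<\infty$ (each point of $S_X$ has a unique supporting functional, as exhibited above), so the smoothness hypothesis needed within Theorem~\ref{thm:verystongnumerical}'s proof is met.

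With all hypotheses verified, Theorem~\ref{thm:verystongnumerical} yields directly that the set of all very strong numerical and norm strong peak functions is dense in $A_{wu}(B_X:X)$, which is the assertion. I do not anticipate any genuine obstacle here; the only computational point requiring a line or two is the parallelism identity for the coordinate functionals, and that is the routine Hölder-equality computation sketched above. One could also phrase the whole argument by simply observing that $\ell_p$ and $\ell_q$ are both uniformly convex and smooth and that the coordinate projections on a space with a $1$-unconditional basis are automatically parallel to $\Pi(X)$, but the explicit computation is cleaner to present.
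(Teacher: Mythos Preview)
Your proposal is correct and follows exactly the paper's approach: apply Theorem~\ref{thm:verystongnumerical} to the natural coordinate projections on $\ell_p$, verifying the $(FPA)$-property, parallelism to $\Pi(X)$, and strength of both $\pi_i$ and $\pi_i^*$ via (local) uniform convexity of $\ell_p$ and $\ell_q$. The only superfluous step is your verification of smoothness, which is not a hypothesis of Theorem~\ref{thm:verystongnumerical} (that theorem replaces the smoothness argument used in the preceding theorem by the strength of $\pi_i^*$); also note that the counting measure is not finite, so the $(FPA)$-property should be invoked via the shrinking monotone FDD route you mention rather than via Corollary~\ref{cor:3condi}(2).
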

\begin{proof}
Let $\{\pi_i, F_i\}_{i=1}^\infty$ be a projections consisting of
$i$-th natural projections. Then these projections satisfy the
conditions in Theorem~\ref{thm:verystongnumerical}. The proof is
done.
\end{proof}

In fact, Corollary~\ref{cor:littlep} holds in general $L_p$ space if
$1<p<\infty$ as we see in Corollary~\ref{cor:largep}.

\begin{thm}\label{thm:smoothrnp}
Suppose that $X$ is a smooth Banach space with the Radon-Nikod\'ym
property. Then the set of all numerical strong peak functions is
dense in $A(B_X: X)$.
\end{thm}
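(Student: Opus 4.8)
The strategy is to reduce the infinite-dimensional statement to the finite-dimensional case (Theorem~\ref{prop:peak}) via the Radon--Nikod\'ym property, using a perturbed optimization argument in the spirit of \cite{CLS} and \cite{AK}. First I would recall that when $X$ has the RNP, the relevant spaces of functions can be analyzed through the isometry $\varphi: f\mapsto \tilde f$ into $C_b(B_X\times B_{X^*})$ from the start of Section~2, so that numerical strong peak functions for $f$ correspond to strong peak functions for $\tilde f$ on the set $\Pi(X)$. The key observation is that smoothness of $X$ forces each $x\in S_X$ to have a unique supporting functional, so the set $\Pi(X)$ is, in a natural sense, a graph over $S_X$; combined with the RNP of $X$ (hence of the relevant dual structure), one obtains enough strongly exposed points to run a variational argument.

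\textbf{Key steps.} (1) Reduce to dense subspaces: by the argument in Proposition~\ref{prop:3peak1}, finite-type polynomials $P$, and more precisely those of the form $R=\pi\circ P\circ\pi$ with $\pi$ a finite-rank norm-one projection, are dense in $A_{wu}(B_X:X)$, and a parallel density statement holds in $A_u$ and $A_b$; so it suffices to perturb such an $R$ slightly to a numerical strong peak function. (2) Apply the finite-dimensional result: on the finite-dimensional range $F=\pi(X)$, Theorem~\ref{prop:peak} gives a numerical peak function $Q'$ on $B_F$ arbitrarily close to $R|_F$, and since $F$ is finite-dimensional, $Q'$ is automatically a (very) strong numerical peak function and a norm strong peak function. (3) Lift back to $X$: set $Q=Q'\circ\pi$; the delicate point is that $Q$ need not itself be a numerical strong peak function on $B_X$ because the projection $\pi$ need not be strong. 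Here is where smoothness and RNP must be used in place of the strong-projection hypothesis of the earlier theorems: one invokes a Bourgain--Stegall type perturbation (cf.\ \cite{Bo}, \cite{S}, \cite{CK}) to the function $\tilde Q$ on $\Pi(X)$ to produce a genuine small perturbation that does strongly peak, using that strongly exposed points of the relevant set are dense by the RNP and that smoothness pins down the second coordinate $x^*$ uniquely from the first coordinate $x$.

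\textbf{Main obstacle.} The hard part will be step (3): controlling the second coordinate. In the preceding theorems this was handled by assuming the projections $\pi_i$ (and sometimes $\pi_i^*$) are strong, which is exactly what is \emph{not} assumed here. The substitute is: smoothness guarantees that if $x_k\to x$ in $S_X$ and $x_k^*$ are the (unique) supporting functionals, then $x_k^*\to x^*$ weak-$*$ automatically, so once the first coordinates converge the whole pair converges in $\tau$; and the RNP supplies, via the perturbed optimization theorem, a dense set of functions whose associated optimization problem over the bounded closed convex hull of $\{(x,x^*):x\in S_X\}$-type sets has a unique, strongly attained solution. Assembling these --- choosing the perturbation on $\tilde Q$ within a prescribed $\eps$, checking the unique maximizer lies over a smooth point, and translating back to a numerical strong peak function of $f$ --- is the technical core. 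One then concludes: given $f\in A(B_X:X)$ and $\eps>0$, chain the approximations $f\approx_\eps R \approx_\eps Q=Q'\circ\pi \approx_\eps$ (perturbed) numerical strong peak function, obtaining density in $A(B_X:X)$.
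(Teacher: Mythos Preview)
Your proposal has a genuine structural gap: step~(1) invokes Proposition~\ref{prop:3peak1}, but that proposition requires the $(FPA)$-property, which is \emph{not} among the hypotheses of Theorem~\ref{thm:smoothrnp}. The theorem assumes only that $X$ is smooth and has the Radon--Nikod\'ym property; there is no reason a general smooth RNP space should admit a family of norm-one finite-rank projections with the approximation behavior that $(FPA)$ demands, and the paper makes no such claim. Consequently the entire finite-dimensional reduction you outline (steps (1)--(2), and the lift in (3)) is unavailable here. Your remark that ``a parallel density statement holds in $A_u$ and $A_b$'' compounds the problem: density of finite-type polynomials in these larger algebras is not automatic without approximation-type hypotheses on $X$ or $X^*$.

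The paper's proof takes a completely different route, with no finite-dimensional approximation at all. It works directly on $B_X$: one defines the upper semi-continuous function
\[
\varphi(x)=\max\{\,|x^*(f(\lambda x))|:\ |\lambda|\le 1,\ x^*\in S_{X^*},\ x^*(x)=\|x\|\,\}
\]
and applies the Bourgain--Stegall perturbed optimization theorem to $\varphi$ on the bounded closed convex set $B_X$ (this is where RNP enters), obtaining a small linear functional $y^*$ such that $\varphi+\operatorname{Re}y^*$ strongly exposes $B_X$ at some $x_0\in S_X$. One then perturbs $f$ by an explicit term of the form $\lambda_1(\overline{\lambda_0}x_0^*(x))^{N-1}y^*(x)\,x_0$ to obtain a function $h$ that strongly attains its numerical radius, and finally adds a second perturbation $\eta\, g(x_0^*(x))\,x_0$ with $g$ a scalar peak function at $\lambda_0$ to force a \emph{unique} maximizing pair. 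Smoothness is used exactly as you anticipated --- to pass from convergence of the first coordinates $x_n\to\lambda_0 x_0$ to weak-$*$ convergence of the supporting functionals --- but it is applied after the variational argument on $B_X$, not as a repair for a failed projection argument. Your instinct that Bourgain--Stegall is the engine and smoothness handles the dual coordinate is correct; what is wrong is the detour through finite-dimensional projections, which the hypotheses do not support.
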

\begin{proof}
An element $h\in A_b(B_X: X)$ is said to {\it strongly attain its
numerical radius} if there is $(x, x^*)\in \Pi(X)$ such that
whenever there is a sequence $\{(x_n, x_n^*)\}_{n=1}^{\infty}$ in
$\Pi(X)$ such that $\lim_n |x^*_n (h(x_n))| = v(h)$, there exist a
subsequence $\{(x_{n_k}, x_{n_k}^*)\}_{k=1}^{\infty}$ in $\Pi(X)$
and $\lambda\in S_\mathbb{C}$ such that $\{(x_{n_k},
x_{n_k}^*)\}_{k=1}^{\infty}$ converges to $(\lambda x,
\overline{\lambda} x^*)$ in $\Pi(X)$.

claim: The set of all elements which strongly attain their numerical
radius is dense in $A(B_X: X)$

Fix $f\in A_b(B_X:X)$ and $\epsilon>0$. Define for each $x\in B_X$,
\[ \varphi(x): = \max\{ |x^*(f(\lambda x))| : \lambda\in \mathbb{C}, |\lambda|\le 1, x^*(x) = \|x\|, x^*\in S_X\}.\]
We claim that $\varphi$ is upper semi-continuous. Indeed, if the
sequence $\{x_n\}_{n=1}^{\infty}$ converges to $x$, then for each
$n\ge 1$, choose $\lambda_n$ such that $\varphi(x_n) =
|x_n^*(f(\lambda_n x_n))|$ and let $x^*$ be the weak-$*$ limit point
of $\{x_n^*\}$. Then since $x_n^*(x_n)=\|x_n\|$, we get $x^*(x) =
\|x\|$. We may assume that the sequence $\{x_n\}_{n=1}^{\infty}$ and
$\{\lambda_n\}_{n=1}^{\infty}$ converge to $x^*$ and $\lambda$,
respectively. Then
\[\lim_{n\to \infty} \varphi(x_n) = \lim_n |x_n^* (f(\lambda_n x_n))|= |x^*(f(\lambda x))| \le
\varphi(x).\] Hence it is easy to see that $\limsup_n \varphi(x_n)
\le \varphi(x)$.

By the perturbed optimization theorem of Bourgain and Stegall
(\cite{Bo}, \cite{S}), there is $y^*$ such that $\|y^*\|<\epsilon$
and $\varphi+ {\rm Re} y^*$ strongly exposes $B_X$ at $x_0$. Then
$y^*(x_0)\neq 0$. Otherwise,
\begin{eqnarray*} \varphi(x_0)
&= \sup\{ \varphi(x) + {\rm Re} y^*(x) : x\in B_X\}\\
&= \sup\{ \varphi(x) +  |y^*(x)| : x\in B_X\}
\end{eqnarray*} and $\varphi(x_0)+{\rm Re} y^*(x_0) =\varphi(-x_0) + {\rm Re} y^*(-x_0)$.
Since $\varphi + {\rm Re} y^*$ strongly exposes $B_X$ at $x_0$, we
get $x_0=0$. It is clear that $\varphi(0)=\sup_{x\in B_X} \varphi(x)
\ge v(f)$. This implies that $\varphi(0) = \|f(0)\|= v(f)$ by
Lemma~\ref{lem:basic}. Choose a sequence $\{(x_n,
x_n^*)\}_{n=1}^{\infty}$ in $\Pi(X)$ such that \[\lim_n
|x_n^*(f(x_n))|=v(f).\] Then $|x_n^* (f(x_n))| \le \varphi(x_n)
+|y^*(x_n)| = \varphi(\lambda_n x_n) + {\rm Re} y^*(\lambda_n x_n)
\le \varphi(0)=v(f)$ for a suitable sequence $\{\lambda_n\}$ in
$S_\mathbb{C}$. So $\lim_n \varphi(\lambda_n x_n) + {\rm Re}
y^*(\lambda_n x_n) =\varphi(0)$. Since $\varphi + {\rm Re} y^*$
strongly exposes $B_X$ at 0, $\{\lambda_n x_n\}_{n=1}^{\infty}$
converges to $0$. This is a contradiction to $\lim_n \|\lambda_n x_n
\|=1$.

Now we get $\|x_0\|=1$. Indeed, it is clear that $x_0\neq 0$. If
$0<\|x_0\|<1$, then
\begin{eqnarray*} \varphi(x_0) + {\rm Re}
y^*(x_0) &= \sup\{ \varphi(x) + {\rm Re} y^*(x) : x\in B_X\}\\
&= \sup\{ \varphi(x) +  |y^*(x)| : x\in B_X\}
\end{eqnarray*}
shows that ${\rm Re} y^*(x_0) = |y^*(x_0)|$ and
\[ \varphi(x_0) + |y^*(x_0)| < \varphi(\frac{x_0}{\|x_0\|}) +
|y^*(\frac{x_0}{\|x_0\|})| = \varphi(\frac{x_0}{\|x_0\|}) + {\rm Re}
y^*(\frac{x_0}{\|x_0\|}).\] This is a contradiction to the fact that
$\varphi +{\rm Re} y^*$ strongly exposes $B_X$ at $x_0$.

Fix $N\ge 1$. There exist  $\lambda_0\in S_\mathbb{C}$, $x_0^*\in
S_{X^*}$, and $x_0^*(x_0)=1$ such that $\varphi(x_0)=
|x_0^*(f(\lambda_0 x_0))|$. Define $h:B_X\to X$ by
\[ h(x):= f(x) + \lambda_1 (\overline{\lambda_0} x_0^*(x))^{N-1}
y^*(x)x_0,\] where the complex number $\lambda_1\in S_\mathbb{C}$ is
properly chosen so that
\[ |x^*_0(f(\lambda_0 x_0)) +\lambda_1 \lambda_0 y^*(x_0)| = |x^*_0(f(\lambda_0 x_0))|+|y^*(x_0)|.\]

It is clear that $h\in A(B_X:X)$ and notice that we get for every
$(x, x^*)\in \Pi(X)$, \begin{eqnarray} |x^*(h(x))| &\le& |x^*(f(x))|
+ |y^*(x)| \le \varphi(x) + |y^*(x)| \label{eq1}\\&\le& \sup\{
\varphi(x) + |y^*(x)| : x\in B_X\} \nonumber\\&=& \sup\{ \varphi(x)
+ {\rm Re} y^*(x) : x\in B_X\}=\varphi(x_0) + {\rm Re}
y^*(x_0).\nonumber\end{eqnarray}

Note that $(\lambda_0x_0, \overline{\lambda_0}x_0^*)\in \Pi(X)$.
Hence $v(h) = \varphi(x_0) + {\rm Re} y^*(x_0)$ because ${\rm Re}
y^*(x_0) = |y^*(x_0)|$ and
\begin{eqnarray*}v(h)&\geq& | \overline{\lambda_0}x_0^*(h(\lambda_0 x_0))|=|x_0^*(h(\lambda_0 x_0))|
= |x_0^*(f(\lambda_0x_0))+\lambda_1\lambda_0 y^*(x_0)|\\&=&
|x^*_0(f(\lambda_0 x_0))|+ |y^*(x_0)|= \varphi(x_0) + |y^*(x_0)| =
\varphi(x_0) + {\rm Re} y^*(x_0).\end{eqnarray*}

We shall show that $h$ strongly attains its numerical radius at
$(x_0, x_0^*)$. Suppose that $\lim_n |x_n^*(h(x_n))|=v(h) =
\varphi(x_0) + {\rm Re} y^*(x_0)$. Choose a sequence  $\{\alpha_n\}$
of complex numbers so that $|\alpha_n|=1$ and
\[ \varphi(x_n) + |y^*(x_n)| = \varphi(\alpha_n x_n) + {\rm Re} y^*(\alpha_n x_n),\ \ \ \forall n\ge 1.\]
Then (\ref{eq1}) shows that $\lim_{n\to \infty}\varphi(\alpha_n x_n)
+ {\rm Re} y^*(\alpha_n x_n) = \varphi(x_0) + {\rm Re} y^*(x_0).$
Since $\varphi+ {\rm Re} y^*$ strongly exposes $B_X$ at $x_0$,
$\{\alpha_n x_n\}$ converges to $x_0$. Hence there is a subsequence
of $\{x_n\}$ which converges to $\alpha x_0$ for some $|\alpha|=1$.
For any weak-$*$ limit point $x^*$ of $\{x_n^*\}$, $x^*(\alpha x_0)
= 1$. Since $X$ is smooth, $x^* = \overline{\alpha} x_0^*$. This
shows that the subsequence $\{x_n^*\}$ converges weak-$*$ to
$\overline\alpha x^*_0$. This shows that $h$ strongly attains its
numerical radius at $(x_0, x_0^*)$. Notice that $\|h - f\|\le
\epsilon$. This proves the claim.

Now choose $h\in A(B_X: X)$ which strongly attains its numerical
radius at $(x_0, x_0^*), \|h-f\|<\frac{\epsilon}{2}$, and $v(h) =
|x_0^*(h(\lambda_0x_0))|$ for some $(x_0, x_0^*)\in \Pi(X)$ and
$\lambda_0\in S_\mathbb{C}$. Choose a peak function $g\in
A(B_{\mathbb{C}})$ at $\lambda_0$ with $\|g\|<\frac{\epsilon}{2}$.
Define
\[ u(x):= h(x) + \eta g(x_0^*(x))x_0,\] where $\eta\in S_\mathbb{C}$
is chosen to be $ |x_0^*(h(\lambda_0 x_0)) + \eta g(\lambda_0)| =
|x_0^*(h(\lambda_0 x_0))| + |g(\lambda_0)|.$ Then \begin{eqnarray*}
|x_0^*(u(\lambda_0 x_0))| &=& |x_0^*(h(\lambda_0 x_0)) + \eta
g(\lambda_0)| \\&=&  |x_0^*(h(\lambda_0 x_0))| + |g(\lambda_0)| =
v(h) + \|g\|.\end{eqnarray*} For each  $(x, x^*)\in \Pi(X)$, we have
\[ |x^*(u(x))| \le |x^*(h(x))| + |g(x^*_0(x))| \le v(h) + \|g\|.\]
So $v(u) = v(h) + \|g\|$. Now we claim that $u$ is a numerical
strong peak function at $(\lambda_0 x_0, \overline{\lambda_0}
x_0^*)$. If there is a sequence $\{(x_n, x_n^*)\}$ in $\Pi(X)$ with
$\lim_n |x_n^*(u(x_n))| = v(u)$, then
\[ |x_n^*(u(x_n))| \le |x_n^*(h(x_n))| + |g(x_0^*(x_n))| \le v(h) +
\|g\|.\] Hence $\lim_n |x^*_N( h(x_n))| = v(h)$ and $\lim_n
|g(x_0^*(x_n))|= \|g\|$. Since $g$ is a strong peak function at
$\lambda_0$, we get $\lim_n x_0^*(x_n)=\lambda_0$.

For any subsequence of $\{(x_n, x_n^*)\}_{n=1}^\infty$, there are a
further subsequence $\{(y_n, y_n^*)\}$ and $\eta \in S_\mathbb{C}$
such that $\lim_n y_n = \eta x_0$ and $w^*-\lim_n y_n^* =
\overline\eta x_0^*$. Since $\lim_n x_0^*(y_n) = \eta$, $\eta
=\lambda_0$. This implies that $\lim_n x_n = \lambda_0x_0$ Let $x^*$
be the weak-$*$ limit point of $\{x_n^*\}_{n=1}^\infty$. Since $X$
is smooth, $x^*(x_0)=\overline{\lambda_0}$ implies that
$x^*=\overline{\lambda_0}x_0^*$. Therefore the weak-$*$ limit of
$\{x_n^*\}_{n=1}^\infty$ is $\overline{\lambda_0}x_0^*$. Thus $u$ is
a numerical strong peak function at $(\lambda_0 x_0,
\overline{\lambda_0} x_0^*)$. Notice also that $\|f-u\|\le
\epsilon$. This completes the proof.
\end{proof}

\begin{rem}
In the proof of Theorem~\ref{thm:smoothrnp}, it is shown that the
set of all strongly numerical radius attaining elements in
$\mathcal{P}({}^N X:X)$ is dense if $X$ is a smooth Banach space
with the Radon-Nikod\'ym property.
\end{rem}

{\bf Question.} Suppose that $X$ is a smooth Banach space with the
Radon-Nikod\'ym property. Is it true that the set of all elements
which are norm and numerical strong peak functions is dense in
$A(B_X: X)$?

\begin{prop}\label{prop:Gdelta}Let $X, Y$ be Banach spaces.
Let $A$ be a subspace of $C_b(B_X: Y)$. Then the set of all strong
peak functions in $A$ is a $G_\delta$-subset of $A$. In case that
$\Pi(X)$ is a complete metrizable space and $X=Y$, then the set of
all numerical strong peak functions in $A$ is a $G_\delta$-subset of
$A$. In particular, if $X^*$ is locally uniformly convex, then
$\Pi(X)$ is  complete metrizable and the net convergence of
$\{(x_\alpha, x_\alpha^*)\}_\alpha$ in $\tau$-topology implies the
convergence of each component in norm.
\end{prop}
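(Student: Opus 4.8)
The plan is to prove each of the three assertions separately, in the order stated, building a standard ``the set of strong peak functions is a countable intersection of open sets'' argument.

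\emph{Step 1: norm strong peak functions.} First I would observe that $f\in A$ is a strong peak function at some $t\in B_X$ precisely when the ``peaking is uniform'' condition holds: for every $\eps>0$ there is $\delta>0$ such that $\|f(s)\|>\|f\|-\delta$ forces $s$ to lie in a set of diameter at most $\eps$. Concretely, for $f\neq 0$ and $n\in\N$ define
\[
U_n=\Bigl\{\, f\in A\setminus\{0\} \ :\ \exists\,\delta>0\ \text{such that}\ \diam\bigl\{t\in B_X:\|f(t)\|>\|f\|-\delta\bigr\}<\tfrac1n \,\Bigr\}.
\]
I would check that each $U_n$ is open in $A$ (a small sup-norm perturbation changes both $\|f\|$ and the superlevel set only slightly, so the same $\delta$, shrunk a little, still works) and that $\bigcap_n U_n$ is exactly the set of strong peak functions together with $0$ (if $f$ lies in every $U_n$, the superlevel sets have a common point since $B_X$-valued images are considered and the sets are nested as $\delta\downarrow 0$; any norming sequence is Cauchy hence converges, giving the unique peak point). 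Removing $\{0\}$, which is closed, keeps it $G_\delta$. This is essentially folklore, so I would keep it brief.

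\emph{Step 2: numerical strong peak functions.} Here I would transfer the same scheme to the space $\Pi(X)$ with the $\tau$-topology, using the map $\widetilde f(x,x^*)=x^*(f(x))$ as in Proposition~\ref{prop:densefinitenumerical}. The key point is that $\widetilde f$ is $\tau$-continuous on $\Pi(X)$ (continuity in $x$ from continuity of $f$, and in $x^*$ from weak-$*$ continuity of evaluation), so $v(f)=\sup_{\Pi(X)}|\widetilde f|$ and the superlevel sets $\{(x,x^*):|\widetilde f(x,x^*)|>v(f)-\delta\}$ are $\tau$-open. Assuming $\Pi(X)$ is \emph{complete metrizable}, fix a compatible complete metric $d$ and define, for $f$ with $v(f)>0$,
\[
V_n=\Bigl\{\, f\in A\ :\ v(f)>0,\ \exists\,\delta>0\ \text{with}\ d\text{-}\diam\bigl\{(x,x^*)\in\Pi(X):|\widetilde f(x,x^*)|>v(f)-\delta\bigr\}<\tfrac1n \,\Bigr\}.
\]
Since $N(H)$ need not be positive on all of $A$, I cannot use $v(\cdot)$ as an equivalent norm globally, but I only need continuity: $f\mapsto v(f)$ is $\|\cdot\|$-continuous (indeed $1$-Lipschitz), and $\|\widetilde f-\widetilde g\|_\infty\le\|f-g\|$, so again each $V_n$ is $\|\cdot\|$-open in $A$. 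The intersection $\bigcap_n V_n$, intersected with $\{f:v(f)>0\}$, is the set of numerical strong peak functions: completeness of $d$ makes any sequence $(x_k,x_k^*)$ with $|\widetilde f(x_k,x_k^*)|\to v(f)$ Cauchy, hence $\tau$-convergent to a unique limit where $|\widetilde f|=v(f)$. The set $\{f:v(f)=0\}$ is closed, so the numerical strong peak functions form a $G_\delta$. (If one wants the set without the ``$v(f)>0$'' caveat, note a numerical strong peak function automatically has $v(f)>0$ unless $X$ is trivial; I would simply state it for $A$ with the convention that $0$ is excluded, mirroring Step 1.)

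\emph{Step 3: the $\ell uc$ case.} Finally I would show that if $X^*$ is locally uniformly convex then $\Pi(X)$ is complete metrizable in $\tau$ and $\tau$-convergence forces norm-convergence of each coordinate. Metrizability: $S_X$ is norm-metrizable trivially; for $S_{X^*}$ with the weak-$*$ topology, local uniform convexity implies that on $S_{X^*}$ the weak-$*$ topology agrees with the norm topology (a weak-$*$-convergent net $x_\alpha^*\to x^*$ in $S_{X^*}$ satisfies $\|x_\alpha^*+x^*\|\to 2$ by weak-$*$ lower semicontinuity of the norm applied suitably, whence $\|x_\alpha^*-x^*\|\to 0$), so $S_{X^*}$ is norm-metrizable and norm-complete as a closed subset of $X^*$; thus $S_X\times S_{X^*}$ with $\tau$ is completely metrizable, and $\Pi(X)$, being $\tau$-closed, inherits this. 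The coordinate-wise norm convergence statement is exactly the above observation on $S_{X^*}$ together with the triviality on $S_X$. Combining with Step 2 gives the ``in particular'' clause.

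\emph{Main obstacle.} The routine parts (openness of $U_n,V_n$, the Cauchy argument) are easy; the only genuine content is the local-uniform-convexity $\Rightarrow$ (weak-$*$ = norm on $S_{X^*}$) step, which one must state carefully because local uniform convexity of $X^*$ is a condition on $X^*$ and one needs weak-$*$ (not weak) lower semicontinuity of $\|\cdot\|$ on $X^*$ to push a weak-$*$-convergent net's midpoints to norm $2$; I would spell out that for $x_\alpha^*\xrightarrow{w^*}x^*$ with all in $S_{X^*}$ one has $\liminf_\alpha\|x_\alpha^*+x^*\|\ge|(x_\alpha^*+x^*)(x)|\to 2$ for a suitable $x\in S_X$ with $x^*(x)$ near $1$, giving $\|x_\alpha^*+x^*\|\to 2$ and hence norm convergence by local uniform convexity.
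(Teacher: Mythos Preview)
Your proposal is correct and follows essentially the same approach as the paper: define superlevel sets, show that the functions whose superlevel sets have small diameter form open sets, and intersect; for the numerical version, do the same on $\Pi(X)$ with a compatible complete metric; for the $X^*$ locally uniformly convex case, use the midpoint trick $\|x_\alpha^*+z^*\|\to 2$ to upgrade weak-$*$ convergence to norm convergence. The only minor difference is that in Step~3 you prove the slightly stronger fact that weak-$*$ and norm topologies coincide on all of $S_{X^*}$, whereas the paper works directly inside $\Pi(X)$ and uses the built-in norming vector $z$ (from $z^*(z)=1$) to get $\liminf_\alpha\|(x_\alpha^*+z^*)/2\|\ge 1$ without the auxiliary $\eps$-argument.
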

\begin{proof}Notice that $f\in A$ is a strong peak
function at $x_0$ if and only if for each neighborhood $V$ of $x_0$,
there is $\epsilon>0$ such that $\{x\in B_X: \|f(x)\|\ge
\|f\|-\epsilon\}$ is contained in $V$.

For each $f\in A$ and $n\ge 1$, let
\[\Delta(f,n)=  \left\{x\in B_X: \|f(x)\|\ge \|f\|-\frac 1n \right\}.\]
For each natural number $N\ge 1$, define
\[ S_N: = \bigcup_{n=1}^\infty \left\{ f\in A\setminus \{0\} :{\rm diam} \Delta(f,n) \le \frac
1N\right\},\] where ${\rm diam} (C)= \inf \{d(x,y):x,y\in A\}$ for a
metric space $(C, d)$. Notice that the set of all  strong peak
functions is $S:=\cap_{N=1}^\infty S_N$. So we have only to show
that each $S_N$ is an open subset of $A$. Fix $f\in S_N$, then there
is $n\ge 1$ such that $n\|f\|>1$ and $\Delta(f,n)\le \frac 1N.$  If
$\|g- f\|\le 1/(3n)$, then $g\neq 0$ and $\Delta(g, 3n) \subset
\Delta(f, n)$. This shows that $f+ \frac{1}{3n} B_A$ is a subset of
$S_N$. Hence $S_N$ is an open subset of $A$.

For the second case, notice that $f\in A$ is a numerical strong peak
function at $(x_0, x^*_0)$ if and only if for each
$\tau$-neighborhood $V$ of $(x_0, x^*_0)$, there is $\epsilon>0$
such that $\{ (x, x^*)\in \Pi(X): |x^*(f(x))|\ge v(f) - \epsilon\}$
is contained in $V$. Suppose that $\Pi(X)$ is a complete metric
space with metric $d$. Similarly for each $f\in A$ and $n\ge 1$,
define \[\tilde\Delta(f, n) = \left\{(x,x^*)\in \Pi(X)
:|x^*(f(x))|\ge v(f)-\frac 1n \right\}.\]

For each natural number $N\ge 1$, define
\[ S_N := \bigcup_{n=1}^\infty \left\{ f\in A\setminus \{0\} : {\rm diam}\tilde\Delta(f, n)
 \le \frac 1N\right\}.\] Notice that the set of all
numerical strong peak functions is $S:=\cap_{N=1}^\infty S_N$. So we
have only to show that each $S_N$ is an open subset of $A$. Fix
$f\in S_N$, then there is $n\ge 1$ such that $n\|f\|>1$ and
$\tilde\Delta(f, n) \le \frac 1N.$ If  $\|g- f\|\le 1/(3n)$, then
$g\neq 0$ and $\tilde\Delta(g, 3n)\subset \tilde\Delta(f, n)$. This
shows that $f+ \frac{1}{3n} B_A$ is a subset of $S_N$. Hence $S_N$
is an open subset of $A$.

Finally, suppose that $X^*$ is locally uniformly convex. Then define
a function $d$ in $\Pi(X)\times \Pi(X)$ to be
\[ d((x, x^*), (y,y^*)) := \|x-y\| + \|x^*- y^*\|.\]
It is clear that $d$ is a complete metric in $\Pi(X)$ and it is also
clear that the $d$-convergence implies the $\tau$-convergence. For
the converse, if the net $\{(x_\alpha, x^*_\alpha)\}_\alpha$
converges to $(z, z^*)$ in $\tau$-topology, then $\lim_\alpha
\|z-x_\alpha\|=0$ and $\{x_\alpha^*\}_\alpha$ converges weak-$*$ to
$z^*$ with $z^*(z)=1$. So
\[ 1\le \liminf_\alpha \left\|\frac{x_\alpha^* + z^*}2 \right\| \le
\limsup_\alpha \left\|\frac{x_\alpha^* + z^*}2 \right\|\le 1.\]
Since $X^*$ is locally uniformly convex,  $\lim_\alpha \|x_\alpha^*
+ z\|=2$ implies that $\lim_\alpha \|z^* - x_\alpha^*\|=0$. Hence
the net $\{(x_\alpha, x^*_\alpha)\}$ converges to $(z, z^*)$ in the
$d$-metric topology. This completes the proof.
\end{proof}

It is shown in \cite{CLS} that the set of strong peak functions in
$A(B_X:X)$ is dense if $X$ has the Radon-Nikod\'ym property. Hence
by Theorem~\ref{thm:smoothrnp} and Proposition~\ref{prop:Gdelta} we
get the following.

\begin{cor}
Let $X$ be a complex Banach space with the Radon-Nikod\'ym property
and $X^*$ is locally uniformly convex. Then the set of all norm and
numerical strong peak functions in $A(B_X: X)$ is a dense
$G_\delta$-subset of $A(B_X: X)$. In particular, every numerical
strong peak function is a very strong numerical peak function.
\end{cor}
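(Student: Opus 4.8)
The plan is to assemble this corollary from the machinery already in place; the only genuinely new observation needed is that local uniform convexity of $X^*$ forces $X$ to be smooth. First I would record that $X^*$ locally uniformly convex implies $X^*$ is strictly convex, and that strict convexity of the dual implies $X$ is smooth (each $x\in S_X$ has a unique norming functional), so that ${\rm sm}(B_X)=S_X$. With $X$ smooth and enjoying the Radon--Nikod\'ym property, Theorem~\ref{thm:smoothrnp} gives that the set $\mathcal{N}$ of all numerical strong peak functions is norm-dense in $A(B_X:X)$. On the other hand, the result of \cite{CLS} quoted just above this corollary gives that the set $\mathcal{S}$ of all (norm) strong peak functions is norm-dense in $A(B_X:X)$.

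Next I would invoke Proposition~\ref{prop:Gdelta}. Since $X^*$ is locally uniformly convex, that proposition tells us $\Pi(X)$ is a complete metrizable space; hence both $\mathcal{S}$ (the strong peak functions, which are $G_\delta$ in $A(B_X:X)$ by the first part of Proposition~\ref{prop:Gdelta}, $B_X$ being a metric space) and $\mathcal{N}$ (the numerical strong peak functions, $G_\delta$ by the second part of that proposition once $\Pi(X)$ is known to be complete metrizable) are $G_\delta$ subsets of $A(B_X:X)$. Because $A(B_X:X)$ is a closed subspace of the Banach space $C_b(B_X:X)$, it is a Baire space, so the intersection of the two dense $G_\delta$ sets, namely $\mathcal{S}\cap\mathcal{N}$, is again a dense $G_\delta$ subset of $A(B_X:X)$; and $\mathcal{S}\cap\mathcal{N}$ is precisely the set of functions that are simultaneously norm strong peak functions and numerical strong peak functions.

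Finally, for the last clause I would again use the closing sentence of Proposition~\ref{prop:Gdelta}: when $X^*$ is locally uniformly convex, $\tau$-convergence of a net $\{(x_\alpha,x_\alpha^*)\}$ in $\Pi(X)$ forces norm convergence of each coordinate. Thus if $f$ is a numerical strong peak function at $(x,x^*)$ and a sequence $\{(x_k,x_k^*)\}$ in $\Pi(X)$ satisfies $|x_k^*(f(x_k))|\to v(f)$, then $(x_k,x_k^*)\to (x,x^*)$ in $\tau$, hence $x_k\to x$ and $x_k^*\to x^*$ in norm; that is, $f$ is a very strong numerical peak function.

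I expect there is essentially no hard step here: the corollary is a packaging of Theorem~\ref{thm:smoothrnp}, the result of \cite{CLS}, Proposition~\ref{prop:Gdelta}, and the Baire category theorem. The only points requiring a moment's care are verifying that local uniform convexity of $X^*$ yields smoothness of $X$ (so that Theorem~\ref{thm:smoothrnp} is applicable) and noting that $A(B_X:X)$, being a closed subspace of a Banach space, is Baire, so that the intersection of the two dense $G_\delta$ sets remains a dense $G_\delta$.
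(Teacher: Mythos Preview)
Your proposal is correct and follows essentially the same route as the paper: the paper simply cites \cite{CLS} for denseness of norm strong peak functions, Theorem~\ref{thm:smoothrnp} for denseness of numerical strong peak functions, and Proposition~\ref{prop:Gdelta} for the $G_\delta$ and ``very strong'' assertions. You have made explicit two details the paper leaves implicit --- that local uniform convexity of $X^*$ forces $X$ to be smooth (so Theorem~\ref{thm:smoothrnp} applies), and the Baire category step combining the two dense $G_\delta$ sets --- but these are precisely the intended fill-ins.
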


\begin{cor}\label{cor:largep}
Let $1<p<\infty$ and $X= L_p(\mu)$ for some measure space $(\Omega,
\Sigma, \mu)$. Then the set of all norm and numerical strong  peak
functions in $A(B_X: X)$ is a dense $G_\delta$-subset of $A(B_X:X)$.
In particular, every numerical strong peak function is a very strong
numerical peak function.
\end{cor}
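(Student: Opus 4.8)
The plan is to derive Corollary~\ref{cor:largep} directly from the corollary immediately preceding it, so that everything reduces to checking that $X=L_p(\mu)$ with $1<p<\infty$ satisfies its two hypotheses: $X$ has the Radon-Nikod\'ym property and $X^*$ is locally uniformly convex. For the first, I would simply recall that $L_p(\mu)$ is reflexive for every $1<p<\infty$ and every measure space $(\Omega,\Sigma,\mu)$, and that every reflexive Banach space has the Radon-Nikod\'ym property.

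For the second, the dual of $L_p(\mu)$ is isometrically $L_q(\mu)$, where $1/p+1/q=1$ and again $1<q<\infty$. By Clarkson's inequalities $L_q(\mu)$ is uniformly convex, hence in particular locally uniformly convex: if $x\in S_{L_q(\mu)}$ and $\{x_n\}\subset B_{L_q(\mu)}$ satisfies $\|x_n+x\|\to 2$, uniform convexity forces $\|x_n-x\|\to 0$. Thus both hypotheses of the preceding corollary hold, and we conclude that the set of all norm and numerical strong peak functions in $A(B_X:X)$ is a dense $G_\delta$-subset of $A(B_X:X)$. (The smoothness of $X$ needed en route, via Theorem~\ref{thm:smoothrnp}, is automatic here, since a Banach space whose dual is strictly convex is smooth, and $L_q(\mu)$ is strictly convex.)

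It remains to address the final assertion, that \emph{every} numerical strong peak function on $B_X$ is in fact a very strong numerical peak function. This is again contained in the preceding corollary, but it is transparent from the last part of Proposition~\ref{prop:Gdelta}: since $X^*=L_q(\mu)$ is locally uniformly convex, convergence of a net $\{(x_\alpha,x_\alpha^*)\}$ to $(x,x^*)$ in the $\tau$-topology of $\Pi(X)$ already implies $\|x_\alpha-x\|\to 0$ and $\|x_\alpha^*-x^*\|\to 0$; hence, for any numerical strong peak function, the defining sequences converge in norm in both coordinates, which is exactly the very strong peak condition. I do not expect a genuine obstacle in any of this; the only point to watch is that the classical facts about $L_p$ (reflexivity, identification of the dual, Clarkson uniform convexity) are being invoked for an arbitrary, not necessarily $\sigma$-finite, measure space, which is nonetheless standard.
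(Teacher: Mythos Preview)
Your proposal is correct and follows exactly the route the paper intends: Corollary~\ref{cor:largep} is stated without proof because it is an immediate specialization of the preceding corollary, and you have verified precisely the two hypotheses needed there (Radon--Nikod\'ym property via reflexivity, and local uniform convexity of $X^*=L_q(\mu)$ via Clarkson's inequalities). Your remark that the relevant classical facts about $L_p$ hold for arbitrary measure spaces is the only point requiring care, and it is indeed standard for $1<p<\infty$.
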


In case that $X$ is finite dimensional, it is clear that $\Pi(X)$ is
a compact metric space. Hence by Theorem~\ref{prop:peak}, we get the
following.

\begin{cor}\label{cor:finitecor}
Let $X$ be a finite dimensional Banach space. Then the set of all
norm and numerical strong peak functions in $A(B_X: X)$ is a dense
$G_\delta$-subset of $A(B_X: X)$.
\end{cor}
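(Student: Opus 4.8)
The plan is to assemble three ingredients already available in the excerpt, with essentially no new work. First I would record the structural facts that hold because $X$ is finite dimensional: the symbol $A(B_X:X)$ denotes a single space, namely $A_b(B_X:X)=A_u(B_X:X)=A_{wu}(B_X:X)$; moreover $B_X$ is a compact metric space, and $\Pi(X)$, being a $\tau$-closed subset of the compact metric space $S_X\times S_{X^*}$, is itself a compact metric space, hence in particular a complete metrizable space. This is exactly the hypothesis under which Proposition~\ref{prop:Gdelta} applies, so I would invoke it twice: the set of all norm strong peak functions in $A(B_X:X)$ is a $G_\delta$-subset, and the set of all numerical strong peak functions in $A(B_X:X)$ is a $G_\delta$-subset. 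A finite intersection of $G_\delta$ sets is again $G_\delta$, so the set $S$ of functions in $A(B_X:X)$ that are simultaneously norm strong peak functions and numerical strong peak functions is a $G_\delta$-subset of $A(B_X:X)$.

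Next I would prove that $S$ is dense, using that in the compact setting ``peak'' and ``strong peak'' coincide. Because $B_X$ is compact, every norm peak function in $A(B_X:X)$ is automatically a norm strong peak function; because $\Pi(X)$ is compact, every numerical peak function is automatically a numerical strong peak function (indeed, as noted in the preliminaries, a very strong numerical peak function when $X$ is finite dimensional). Therefore the set $\Delta_1\cap\Delta_2$ of Theorem~\ref{prop:peak} is contained in $S$. Since Theorem~\ref{prop:peak} asserts that $\Delta_1\cap\Delta_2$ is dense in $A_u(B_X:X)=A(B_X:X)$, the larger set $S$ is dense as well; combined with the previous paragraph, $S$ is a dense $G_\delta$-subset of $A(B_X:X)$, which is the claim.

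I do not expect any genuine obstacle here; this corollary is a packaging of earlier results. The one point deserving a line of care is that $\Delta_1$ in Theorem~\ref{prop:peak} carries the extra requirement that $f(t)/\|f\|$ be a smooth point of $B_X$, so that theorem only gives density of a (possibly proper) subset of $S$ — but density of a subset suffices for density of $S$, so nothing is lost. Alternatively, one may quote the remark following Theorem~\ref{prop:peak}, which already states that $\Delta_1\cap\Delta_2$ is a dense $G_\delta$-subset when $X$ is finite dimensional, and then simply observe $\Delta_1\cap\Delta_2\subseteq S$ together with the fact that $S$ is $G_\delta$ to conclude.
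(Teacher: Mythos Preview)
Your proposal is correct and follows essentially the same approach as the paper: the paper's justification is simply the sentence preceding the corollary (``it is clear that $\Pi(X)$ is a compact metric space. Hence by Theorem~\ref{prop:peak}, we get the following''), relying on Theorem~\ref{prop:peak} for density and implicitly on Proposition~\ref{prop:Gdelta} (cf.\ the Remark after Theorem~\ref{prop:peak}) for the $G_\delta$ assertion. Your write-up merely makes these steps explicit and handles the minor bookkeeping about $\Delta_1$ versus $S$, which is exactly what is needed.
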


\begin{prop}\label{prop:Gdelta2}
Let $X$ be a separable Banach space. Then $\Pi(X)$ is a complete
metrizable.
\end{prop}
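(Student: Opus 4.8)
The plan is to produce an explicit complete metric on $\Pi(X)$ inducing the $\tau$-topology, using separability of $X$ to get a metrizable weak-$*$ topology on the relevant subset of $B_{X^*}$. First I recall that $\Pi(X)$ is a $\tau$-closed subset of $S_X\times S_{X^*}$, where $S_X$ carries the norm topology and $S_{X^*}$ the weak-$*$ topology. Since $X$ is separable, fix a dense sequence $\{z_n\}_{n=1}^\infty$ in $B_X$; then the weak-$*$ topology restricted to the (norm-bounded) ball $B_{X^*}$ is metrized by
\[
\rho(x^*,y^*) := \sum_{n=1}^\infty 2^{-n}\,|x^*(z_n)-y^*(z_n)|.
\]
Consequently the product $S_X\times B_{X^*}$, with the norm topology on the first factor and the weak-$*$ topology on the second, is metrized by
\[
d\big((x,x^*),(y,y^*)\big) := \|x-y\| + \rho(x^*,y^*),
\]
and $\tau$ on $S_X\times S_{X^*}$ is exactly the subspace topology of this.

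Next I would check completeness. The space $(S_X, \|\cdot\|)$ is complete because $S_X$ is norm-closed in the Banach space $X$. The ball $(B_{X^*},\rho)$ is complete: a $\rho$-Cauchy sequence $\{x_n^*\}$ is pointwise Cauchy on $\{z_n\}$, hence (by uniform boundedness and density of $\{z_n\}$ in $B_X$) pointwise Cauchy on all of $X$, so it converges pointwise to a linear functional $x^*$ with $\|x^*\|\le 1$, and $\rho(x_n^*,x^*)\to 0$; thus $(B_{X^*},\rho)$, being also weak-$*$ compact, is complete. Hence $S_X\times B_{X^*}$ is complete under $d$. Finally, $\Pi(X)$ is $d$-closed in $S_X\times B_{X^*}$: it is the $\tau$-closed set $\{(x,x^*): \|x^*\|=1=x^*(x)\}$, and one sees directly that if $(x_n,x_n^*)\in\Pi(X)$ with $d$-limit $(x,x^*)$, then $x_n^*(z)\to x^*(z)$ for all $z$ forces $x^*(x)=\lim x_n^*(x_n)=1$ (using $\|x_n-x\|\to 0$ and $\|x_n^*\|\le 1$), whence $\|x^*\|\ge 1$ and so $\|x^*\|=1$. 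A $d$-closed subset of a complete metric space is complete, so $(\Pi(X),d)$ is a complete metric space inducing $\tau$.

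I do not expect a serious obstacle here; the one point requiring a little care is the verification that weak-$*$ limits of functionals in $\Pi(X)$ stay on the sphere (so that $\Pi(X)$ is genuinely $d$-closed, not merely $d$-complete inside the compact ball), and that the evaluation $(x,x^*)\mapsto x^*(x)$ is $\tau$-continuous — both of which follow from the joint estimate $|x_n^*(x_n)-x^*(x)|\le |x_n^*(x_n-x)| + |x_n^*(x)-x^*(x)| \le \|x_n-x\| + |x_n^*(x)-x^*(x)|$ together with the fact that $\rho$-convergence of $\{x_n^*\}$ to $x^*$ on the dense set $\{z_n\}$ upgrades, via $\|x_n^*\|\le 1$, to pointwise convergence on all of $X$.
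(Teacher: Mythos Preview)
Your proposal is correct and follows essentially the same approach as the paper: both pick a dense sequence in $B_X$, use it to metrize the weak-$*$ topology on $B_{X^*}$, and take the sum metric $\|x-y\|+\rho(x^*,y^*)$ on $\Pi(X)$. Your organization differs only cosmetically---you verify completeness of the ambient product $S_X\times B_{X^*}$ and then closedness of $\Pi(X)$ inside it, whereas the paper argues directly that a $d_1$-Cauchy sequence in $\Pi(X)$ converges---and your treatment is in fact more careful than the paper's, which leaves the check that the limit pair lies in $\Pi(X)$ implicit.
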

\begin{proof}
Let $\{x_n\}_{n=1}^\infty$ be a dense subset in $B_X$. Then in
$B_{X^*}$, the metric
\[ d(x^*, y^*): = \sum_{n=1}^\infty \frac{|x^*(x_n)-
y^*(x_n)|}{2^n}\] induces the same topology as the weak-$*$ topology
in $B_{X^*}$. Define a function $d_1: \Pi(X)\times \Pi(X) \to
[0,\infty)$ to be
\[ d_1((x, x^*), (y, y^*)) := \|x-y\| + d(x^*, y^*).\]
It is clear that $d_1$ induces the $\tau$-topology in $\Pi(X)$. So
we have only to show that $d_1$ is a complete metric. Suppose that
$\{(x_n, x_n^*)\}$ is a $d_1$-Cauchy sequence. Then it is clear that
there is $x\in S_X$ such that $\lim_n \|x_n - x\|=0$. Notice that
$\lim_n x^*_N(x_k)$ exists for each $k\ge  1$. Let $x^*$ be the
weak-$*$ limit point of $\{x_n^*\}_{n=1}^\infty$. Then $x^*(x_k) =
\lim_n x^*_N(x_k)$ for each $k\ge 1$. Hence $\{x_n^*\}_n$ converges
weak-$*$ to $x^*$. This completes the proof.
\end{proof}

\begin{thm} Let $X=\ell_1$. Then the set of all norm and
numerical strong peak functions in $A(B_X: X)$ is a dense
$G_\delta$-subset of $A(B_X:X)$.
\end{thm}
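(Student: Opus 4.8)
The plan is to combine the general machinery already developed in the paper with the structural facts about $\ell_1$. The key point is that $\ell_1$ has the Radon--Nikod\'ym property (it is a separable dual, indeed $\ell_1 = c_0^*$), so by the result of \cite{CLS} quoted in the excerpt the set of norm strong peak functions in $A(B_{\ell_1}:\ell_1)$ is already dense. For the numerical part, I would like to invoke Theorem~\ref{thm:smoothrnp} to get denseness of numerical strong peak functions, but $\ell_1$ is not smooth, so that theorem does not apply directly. Instead I would go through the finite-dimensional approximation scheme: $\ell_1$ has the $(FPA)$-property with the natural coordinate projections $\{\pi_n, F_n\}$ onto $F_n = \ell_1^n$, and these projections are norm-one and parallel to $\Pi(\ell_1)$ (if $(x,x^*)\in\Pi(\ell_1)$, then on the support of $x$ one has $|x^*_k| = 1$, so $|\langle x^*|_{F_n}, \pi_n(x)\rangle| = \|x^*|_{F_n}\|\cdot\|\pi_n(x)\|$). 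Moreover each $\pi_n$ is a \emph{strong} projection: if $\pi_n(x_k)\to y$ in norm with $y\in S_{\ell_1^n}$, then $\sum_{j>n}|x_k(j)| = 1 - \|\pi_n(x_k)\|_1 \to 0$, hence $x_k\to y$. So the hypotheses of the first theorem after Corollary~\ref{cor:littlep} are satisfied except smoothness; I would check that the proof of that theorem actually only needs smoothness to pin down \emph{uniqueness} of the weak-$*$ limit of $\{x_n^*\}$, and that at points of $\rho A(B_{\ell_1})$ — which are smooth points of $B_{\ell_1}$ by the structure of the extreme points $\pm e_n$ — this uniqueness still holds.

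Concretely, the steps I would carry out are: (1) verify $\ell_1$ has $(FPA)$-property with the coordinate projections, that they are strong and parallel to $\Pi(\ell_1)$; (2) invoke Proposition~\ref{prop:3peak1} to reduce to finite-type polynomials $Q = \pi_n\circ Q\circ\pi_n$ whose restriction $Q|_{\ell_1^n}$ is a norm and numerical peak function on $B_{\ell_1^n}$ (here Corollary~\ref{cor:finitecor}, or rather Theorem~\ref{prop:peak}, supplies the finite-dimensional peak functions); (3) given such a $Q$, show it is a numerical strong peak function on $B_{\ell_1}$ by the argument in the proof of the theorem following Corollary~\ref{cor:littlep}: if $|x_k^*(Q(x_k))|\to v(Q)$ with $(x_k,x_k^*)\in\Pi(\ell_1)$, then passing to $(\pi_n(x_k), x_k^*|_{F_n})$ and using strongness of $\pi_n$ plus Lemma~\ref{lem:basic} forces $\pi_n(x_k)\to y_0$ in norm and $x_k\to y_0$ in norm; (4) note $y_0\in S_{\ell_1^n}$, and since $Q|_{\ell_1^n}$ is a \emph{norm} peak function, $y_0/\|Q(y_0)\|$ is a norm-exposed (in particular smooth) point of $B_{\ell_1^n}$, which forces $y_0 = \pm e_{m_0}$ for a single coordinate $m_0$; then $y_0$ is a smooth point of $B_{\ell_1}$ too, so the functional $x^*$ with $x^*(y_0)=1$ is unique, giving a unique weak-$*$ limit point for $\{x_k^*\}$ and hence $\tau$-convergence. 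This shows the set of norm and numerical strong peak functions is dense. (5) For the $G_\delta$ conclusion: $\ell_1$ is separable, so by Proposition~\ref{prop:Gdelta2} $\Pi(\ell_1)$ is complete metrizable, and then Proposition~\ref{prop:Gdelta} gives that the set of norm strong peak functions and the set of numerical strong peak functions are each $G_\delta$; a dense $G_\delta$ intersected with a dense $G_\delta$ in a complete (hence Baire) metric space is again dense $G_\delta$, which finishes the proof.

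The main obstacle is step (3)--(4): the cited theorem was stated for \emph{smooth} $X$, and $\ell_1$ is emphatically not smooth, so I cannot quote it as a black box. The real content is that the only place smoothness was used is to guarantee a unique norm-one functional supporting $B_X$ at the relevant point, and for $\ell_1$ the relevant point $y_0$ is always (up to a unimodular scalar) a standard basis vector $e_{m_0}$ — precisely because $Q|_{\ell_1^n}$ was chosen to be a \emph{norm} peak function, so that the norm is attained at a unique point which, by the finite-dimensional structure of $B_{\ell_1^n}$, must be an extreme point $\pm e_{m_0}$. Each such point \emph{is} a smooth point of $B_{\ell_1^n}$ (the supporting functional being $\operatorname{sign}$ on that coordinate, arbitrary elsewhere only up to $\ell_\infty^n$-norm $\le 1$ — wait: in $\ell_1^n$ the dual is $\ell_\infty^n$ and $e_{m_0}$ is not a smooth point). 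So I would have to be careful here: smoothness of $B_{\ell_1}$ at $e_{m_0}$ also fails. The correct resolution is that we do not need $y_0$ to be smooth; we need the pair $(y_0, y_0^*)$ to be the \emph{unique} numerical peak point, and then we argue directly that any weak-$*$ limit point $x^*$ of $\{x_k^*\}$ restricts on $F_n$ to $y_0^*$ (by numerical strong peak-ness of $Q|_{F_n}$) and that $x^*$ is determined on the remaining coordinates because $x^*(y_0)=1$ with $y_0 = \pm e_{m_0}$ forces $x^*(e_{m_0}) = \operatorname{sign}$, while $\|x^*\|_\infty=1$ and the other coordinates are free — hence $x^*$ is \emph{not} unique, and $\tau$-convergence of $\{(x_k,x_k^*)\}$ genuinely fails unless we perturb further. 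So the honest plan is: after step (4), perform one more perturbation as in the proof of Theorem~\ref{thm:smoothrnp} — add a term $\eta\, g(x_0^*(x))\,x_0$ with $g$ a scalar peak function — to pin the first coordinate to $\lambda_0 x_0$, and then appeal to the separable-RNP denseness of \emph{norm} strong peak functions from \cite{CLS} to handle the norm side simultaneously. This coupling of the numerical perturbation with the norm perturbation, while keeping both peak properties, is where the technical care is required.
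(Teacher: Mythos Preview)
Your proposal contains a genuine gap that you partially diagnose but do not resolve. The $(FPA)$ route produces polynomials $Q=\pi_n\circ Q\circ\pi_n$ whose restriction to $\ell_1^n$ is a numerical peak function at some $(y_0,y_0^*)\in\Pi(\ell_1^n)$; but lifting to $\ell_1$, any weak-$*$ limit point $x^*$ of a maximizing sequence $\{x_k^*\}$ satisfies only $x^*|_{F_n}=y_0^*$ and $x^*(y_0)=1$, and since $y_0$ is (up to a scalar) some $e_{m_0}$, the coordinates of $x^*$ outside $F_n$ are completely free in $B_{\ell_\infty}$. So $Q$ is \emph{not} a numerical strong peak function on $B_{\ell_1}$, and your suggested further perturbation $\eta\,g(x_0^*(x))x_0$ does not help: that term only serves to pin the \emph{first} component to $\lambda_0 x_0$ (it is the device from Theorem~\ref{thm:smoothrnp} for killing the scalar rotation ambiguity), and the non-uniqueness of $x^*$ on the tail coordinates survives untouched. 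The smoothness hypothesis in the theorems you want to quote is doing essential work precisely here, and there is no cheap substitute for it in $\ell_1$.

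The paper abandons the $(FPA)$ machinery entirely for this theorem and instead runs a direct Bourgain--Stegall perturbation argument tailored to $\ell_1$. After locating $x_0\in S_{\ell_1}$ where $\varphi+\RE y^*$ is strongly exposed, the proof decomposes $\mathbb{N}$ into $\supp(x_0)$, $\supp(f(\lambda_0 x_0))\setminus\supp(x_0)$, and the remainder, and builds explicit functionals $x_1^*,x_2^*,x_3^*\in\ell_\infty$ supported on these pieces. The crucial idea you are missing is that the perturbed function $h$ contains, in addition to the $y^*(x)x_0$ and $g(x_1^*(x))x_0$ terms, a \emph{constant} term $+y$ with $y\in\ell_1$ supported on the remainder; any limiting $x^*$ must then satisfy $|x^*(y)|=\|y\|$, which forces $x^*$ to equal a fixed unimodular multiple of $x_3^*$ on those coordinates. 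Together with the constraints coming from $x^*(x_0)$ and from the equality $|x^*(f(\lambda_0 x_0))|=\varphi(x_0)$, this pins $x^*$ down on \emph{every} coordinate, yielding a genuine numerical strong peak function. The $G_\delta$ part of your argument (via Propositions~\ref{prop:Gdelta} and~\ref{prop:Gdelta2}) is correct and matches the paper.
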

\begin{proof}Since $\ell_1$ has the Radon-Nikod\'ym property, the set of strong peak functions in
$A(B_X:X)$ is dense \cite{CLS}.  Hence by
Proposition~\ref{prop:Gdelta}, the set of all strong peak function
in $A(B_X:X)$ is a dense $G_\delta$-subset of $A(B_X:X)$. By
Proposition~\ref{prop:Gdelta} and~\ref{prop:Gdelta2}, the set of all
numerical strong peak functions is a $G_\delta$-subset of
$A(B_X,X)$. So we have only to show that the set of all numerical
strong peak functions in $A(B_X:X)$ is dense.

Fix $f\in A_b(B_X:X)$ and $\epsilon>0$. Define for each $x\in B_X$,
\[\varphi(x): = \max\{ |x^*(f(\lambda x))| : \lambda\in \mathbb{C}, |\lambda|\le 1, x^*(x) = \|x\|, x^*\in S_X\}.\]
In the proof of Theorem~\ref{thm:smoothrnp}, we showed that
$\varphi$ is upper semi-continuous. Since $\ell_1$ has the
Radon-Nikod\'ym property, there is $y^*$ such that
$\|y^*\|<\epsilon$ and $\varphi+ {\rm Re} y^*$ strongly exposes
$B_X$ at $x_0$. Then $y^*(x_0)\neq 0$ and $\|x_0\|=1$ as the proof
of Theorem~\ref{thm:smoothrnp}. Notice that \begin{eqnarray*}
\varphi(x_0) + {\rm Re}
y^*(x_0) &=& \sup\{ \varphi(x) + {\rm Re} y^*(x) : x\in B_X\}\\
&=& \sup\{ \varphi(x) +  |y^*(x)| : x\in B_X\}\\
&=& \varphi(x_0)+ |y^*(x_0)|.
\end{eqnarray*}

Then there exist $\lambda_0\in S_\mathbb{C}$ and $x_0^*\in
S_{\ell_\infty}$ such that $\varphi(x_0) = |x_0^*
(f(\lambda_0x_0))|$ and $x_0^*(x_0)=1=\|x_0\|$. So it is easy to see
that if $i\in {\rm supp}(x_0)$, then $x_0^*(i)= \overline{{\rm
sign}(x_0(i))}$.

For each $i\ge 1$, let $x_1^*$ as
\[ x_1^*(i): = \left\{
              \begin{array}{ll}
                \overline{{\rm sign}(x_0(i))}, & \hbox{if } x_0(i)\neq 0 \\
                0, & \hbox{otherwise.}
              \end{array}
            \right. \]
Notice that
\begin{eqnarray*} \varphi(x_0) &= \sup_{ y^*\in B_{\ell_\infty}, {\rm supp}(y^*)\cap {\rm
supp}(x_1^*)=\emptyset}\{ |x^*_1 (f(\lambda_0 x_0)) +
y^*(f(\lambda_0 x_0))|\}
\\
&= \sup_{y^*\in B_{\ell_\infty}, {\rm supp}(y^*)\cap {\rm
supp}(x_1^*)=\emptyset}\{ |x^*_1 (f(\lambda_0 x_0))| +
|y^*f((\lambda_0 x_0))|\}\\
&= |x_1^*(f(\lambda_0 x_0))| + \sum_{i\not\in {\rm supp}(x_0)}
|\inner{e_i^*, f(\lambda_0 x_0)}|.
\end{eqnarray*} Now let $x_2^*$ as
\[  x_2^*(i): = \left\{
              \begin{array}{ll}
                \overline{{\rm sign}(f(\lambda_0 x_0))(i)}, & \hbox{  if  } x_0(i)= 0, i\in {\rm supp}(f(\lambda_0 x_0)) \\
                0, & \hbox{  otherwise.}
              \end{array}
            \right. \]
Then $\varphi(x_0) = |x_1^*(f(\lambda_0x_0))| +
|x_2^*(f(\lambda_0x_0))|$.

In the first case, suppose that $\mathbb{N}\neq {\rm supp}(x_0)~
\cup ~{\rm supp}(f(\lambda_0 x_0))$ and  \[\sum_{i\not\in {\rm
supp}(x_0)} |\inner{e_i^*, f(\lambda_0 x_0)}|\neq 0.\]

Choose a peak function $g\in A_u(B_\mathbb{C})$ at $\lambda_0$ with
$0<\|g\|\le \epsilon$ and $0\neq y\in \ell_1$ such that ${\rm
supp}(y)$ is the complement of ${\rm supp}(x_0)\cup {\rm
supp}(f(\lambda_0 x_0))$ and $\|y\|\le \epsilon$. Then there is a
unique element $x_3^*\in S_{\ell_\infty}$ such that ${\rm
supp}(x_3^*)={\rm supp}(y)$ and $x_3^*(y)=\|y\|$.

Define a function $h\in A(B_X: X)$ as
\[ h(x): = f(x) + \eta_1 y^*(x) x_0 + \eta_2 g(x_1^*(x))x_0 +y,~ \forall x\in B_X\] and
define $z^*: = x_1^* + \xi_2 x_2^* + \xi_3 x_3^*\in S_{l_{\infty}}$,
where if $x_1^*(f(\lambda_0 x_0))\neq 0$, then $\eta_1$, $\eta_2$,
$\xi_2$ and $\xi_3$ are uniquely determined complex numbers in
$S_\mathbb{C}$ such that
\begin{eqnarray}\label{eq:1sameargument}|z^*(h(\lambda_0 x_0))| &=& |x_1^*(f(\lambda_0 x_0))+ \xi_2 x_2^*(f(\lambda_0
x_0))+ \eta_1 y^*(\lambda_0 x_0) + \eta_2\|g\|  + \xi_3\|y\||~\nonumber\\
&=& |x_1^*f(\lambda_0 x_0)|+ |x_2^*f(\lambda_0 x_0)|+
|y^*(\lambda_0x_0)| + \|g\| +
 + \|y\|\\
&=& \varphi(x_0) + |y^*(x_0)| + \|g\| +
\|y\|,\nonumber\end{eqnarray} if $x_1^*(f(\lambda_0 x_0))=0$, then
just take $\eta_1=1$ and choose $\eta_2$, $\xi_2$ and $\xi_3$ as
uniquely determined complex numbers in $S_\mathbb{C}$ satisfying
(\ref{eq:1sameargument}).

Notice that $(\lambda_0 x_0, \overline\lambda_0 z^*)\in \Pi(X)$.
Hence $v(h) \ge \varphi(x_0) + |y^*(x_0)| + \|g\| + \|y\|.$ For any
$(x, x^*)\in \Pi(X)$,
\[ |x^*(h(x))| \le \varphi(x) + |y^*(x)| + \|g\| + \|y\|\le \varphi(x_0) +
|y^*(x_0)|+\|g\| + \|y\|.\] Hence $v(h) =\varphi(x_0) + |y^*(x_0)| +
\|g\| + \|y\|.$

We claim that $h$ is a numerical strong peak function at $(\lambda_0
x_0, \overline{\lambda_0} z^*)$. Indeed, if there is a sequence
$(x_n, x_n^*)\in \Pi(X)$ such that $\lim_n |x_n^*(h(x_n))|=v(h)$,
then there is a sequence $\{\tau_n\}$ in $S_\mathbb{C}$ such that
\begin{eqnarray*}
|x_n^*(h(x_n))| &\le& |x_n^*(f(x_n))| + |y^*(x_n)| + |x_n^*(y)| +
|g(x_1^*(x_n))|\\
&\le& \varphi(\tau_n x_n) + {\rm Re} y^*(\tau_n x_n) + |x_n^*(y)| +
|g(x_1^*(x_n))|\\
&\le& \varphi( x_0) + |y^*(x_0)| + \|g\| + \|y\| =v(h).
\end{eqnarray*}
Therefore $\lim_n \tau_n x_n = x_0$ and $\lim_n
x_1^*(x_n)=\lambda_0$. So it is easy to see that $\lim_n \tau_n =
\overline{\lambda_0 }$. This implies that $\lim_n x_n = \lambda_0
x_0$.

Let $x^*$ be the weak-$*$ limit point of $\{x_n^*\}_{n=1}^\infty$.
Then \begin{eqnarray*} v(h) &=& |x^*(h(\lambda_0x_0))|\\ &=&|
x^*(f(\lambda_0x_0)) + \eta_1
y^*(\lambda_0x_0) x^*(x_0) + \eta_2 \|g\|x^*(x_0) +x^*(y)|\\
& \le& | x^*(f(\lambda_0 x_0))| + |y^*(x_0)|~ |x^*(x_0)| +  \|g\|~|x^*(x_0)| +|x^*(y)|\\
& \le& \varphi(x_0) + |y^*(x_0)| + \|g\| + \|y\| = v(h)
\end{eqnarray*} shows that $|x^*(x_0)|=1$ and $|x^*(y)|= \|y\|$. So
$x^* = \xi_1' x_1^* + \xi_3' x_3^* + x_4^*$ for some $x_4^*\in
B_{\ell_\infty}$ with ${\rm supp}(x_4^*)=[{\rm supp}(x_0)~\cup~ {\rm
supp}(y)]^c$ and $\xi_1', \xi_3'\in S_\mathbb{C}$. So
\begin{eqnarray} v(h) &=& |x^*(h(\lambda_0))|\\ &=&|\xi_1'
x^*_1(f(\lambda_0x_0)) + x_4^*(f(\lambda_0 x_0)) +\xi_1' \eta_1
y^*(\lambda_0x_0) + \eta_2 \|g\|\xi_1' +\xi_3'\|y\||\nonumber\\
&=&| x^*_1(f(\lambda_0x_0)) + \overline{\xi_1'} x_4^*(f(\lambda_0
x_0)) + \eta_1 y^*(\lambda_0x_0) + \eta_2 \|g\|
+\overline{\xi_1'}\xi_3'\|y\||\label{eq2:same3}\\
& \le& | x^*_1(f(\lambda_0x_0))| +  |x_4^*(f(\lambda_0 x_0))| + |
y^*(x_0)| + \|g\| +\|y\|\nonumber\\
&\le& | x^*_1(f(\lambda_0x_0))| +  |x_2^*(f(\lambda_0 x_0))| + |
y^*(x_0)| + \|g\| +\|y\| = v(h)\nonumber
\end{eqnarray} shows that $|x_4^*(f(\lambda_0 x_0))|= |x_2^*(f(\lambda_0
x_0))|$. Notice that ${\rm supp}(x_4^*)={\rm supp}(x_2^*)$ and
\[|x_2^*(f(\lambda_0 x_0))|= \sum_{i\in supp(x_2^*)} |\inner{e_i^*,
f(\lambda_0 x_0)}|.\] So $x_4^* = \xi_2' x_2^*$ for some $\xi_2'\in
S_\mathbb{C}$. By (\ref{eq:1sameargument}) and (\ref{eq2:same3}),
\begin{eqnarray*}
v(h) &=& | x^*_1(f(\lambda_0x_0)) + \overline{\xi_1'}\xi_2'
x_2^*(f(\lambda_0 x_0)) + \eta_1 y^*(\lambda_0x_0) + \eta_2 \|g\|
+\overline{\xi_1'}\xi_3'\|y\||\\ &=&|x_1^*(f(\lambda_0 x_0))+ \eta_1
y^*(\lambda_0 x_0) + \eta_2\|g\| + \xi_2 x_2^*f(\lambda_0 x_0) +
\xi_3\|y\||.\end{eqnarray*} Since $x^*(x_0)=\overline{\lambda_0}$,
we get $\xi_1' = \overline{\lambda_0}$, $\xi_2'
=\overline{\lambda_0}\xi_2$ and $\xi_3'=\overline{\lambda_0}\xi_3$.
Hence $x^* = \overline{\lambda_0}z^*$ and $\{x_n^*\}$ converges
weak-$*$ to $\overline{\lambda_0} z^*$. Thus $h$ is a numerical
strong peak function at $(\lambda_0 x_0, \overline{\lambda_0} z^*)$.

In the second case, we assume that $\sum_{i\not\in {\rm supp}(x_0)}
|\inner{e_i^*, f(\lambda_0 x_0)}|= 0$ and $\mathbb{N}\neq {\rm
supp}(x_0)$.

Then ${\rm supp}(f(\lambda_0 x_0))\subset {\rm supp}(x_0)$. Choose a
peak function $g\in A_u(B_\mathbb{C})$ at $\lambda_0$ with $\|g\|\le
\epsilon$ and $y\in \ell_1$ such that ${\rm supp}(y)$ is the
complement of ${\rm supp}(x_0)$ and $\|y\|\le \epsilon$. Then there
is a unique element $x_3^*\in S_{\ell_\infty}$ such that ${\rm
supp}(x_3^*)={\rm supp}(y)$ and $x_3^*(y)=\|y\|$.

Define a function $h\in A(B_X:X)$ as
\[ h(x): = f(x) + \eta_1 y^*(x) x_0 + \eta_2 g(x_1^*(x))x_0 +y,~\forall x\in B_X\] and
define $z^*: = x_1^* + \xi_3 x_3^*$, where if $x_1^*(f(\lambda_0
x_0))\neq 0$, then $\eta_1$, $\eta_2$ and $\xi_3$ are uniquely
determined complex numbers in $S_\mathbb{C}$ such that
\begin{eqnarray}\label{eq:1sameargument2}|z^*(h(\lambda_0 x_0))| &=& |x_1^*(f(\lambda_0 x_0))+
\eta_1 y^*(\lambda_0 x_0) + \eta_2\|g\| + \xi_3\|y\||\nonumber\\
&=& |x_1^*(f(\lambda_0 x_0))|+  |y^*(\lambda_0x_0)| + \|g\| +
 \|y\|\\
&=& \varphi(x_0) + |y^*(x_0)| + \|g\| +
\|y\|,\nonumber\end{eqnarray} if $x_1^*(f(\lambda_0 x_0))=0$, then
take $\eta_1=1$ and choose $\eta_2$ and $\xi_3$ in $S_\mathbb{C}$ as
uniquely defined numbers satisfying (\ref{eq:1sameargument2}).

Notice that $(\lambda_0 x_0, \overline\lambda_0 z^*)\in \Pi(X)$.
Hence $v(h) \ge \varphi(x_0) + |y^*(x_0)| + \|g\| + \|y\|.$ For any
$(x, x^*)\in \Pi(X)$,
\[ |x^*(h(x))| \le \varphi(x) + |y^*(x)| + \|g\| + \|y\|\le \varphi(x_0) +
|y^*(x_0)|+\|g\| + \|y\|.\] Hence $v(h) =\varphi(x_0) + |y^*(x_0)| +
\|g\| + \|y\|.$

We claim that $h$ is a numerical strong peak function at $(\lambda_0
x_0, \overline{\lambda_0} z^*)$. Indeed, if there is a sequence
$(x_n, x_n^*)\in \Pi(X)$ such that $\lim_n |x_n^*(h(x_n))|=v(h)$,
then there is a sequence $\{\tau_n\}$ in $S_\mathbb{C}$ such that
\begin{eqnarray*}
|x_n^*(h(x_n))| &\le& |x_n^*(f(x_n))| + |y^*(x_n)| + |x_n^*(y)| +
|g(x_1^*(x_n))|\\
&\le& \varphi(\tau_n x_n) + {\rm Re} y^*(\tau_n x_n) + |x_n^*(y)| +
|g(x_1^*(x_n))|\\
&\le& \varphi( x_0) + |y^*(x_0)| + \|g\| + \|y\| =v(h).
\end{eqnarray*}
Therefore $\lim_n \tau_n x_n = x_0$ and $\lim_n
x_1^*(x_n)=\lambda_0$. So it is easy to see that $\lim_n \tau_n =
\overline{\lambda_0 }$. This implies that $\lim_n x_n = \lambda_0
x_0$.

Let $x^*$ be the weak-$*$ limit point of $\{x_n^*\}_{n=1}^\infty$.
Then \begin{eqnarray} v(h) &=& |x^*(h(\lambda_0))|\\
&=&|x^*(f(\lambda_0x_0)) + \eta_1 y^*(\lambda_0x_0) x^*(x_0 )+ \eta_2 \|g\|x^*(x_0) +x^*(y)|\nonumber\\
&\le& | x^*(f(\lambda_0 x_0))| + |y^*(x_0)| ~|x^*(x_0)| +  \|g\|~|x^*(x_0)| +|x^*(y)|\label{eq2:same4}\\
&\le& \varphi(x_0) + |y^*(x_0)| + \|g\| + \|y\| = v(h)\nonumber
\end{eqnarray} shows that $|x^*(x_0)|=1$ and $|x^*(y)|=\|y\|$. So $x^* = \xi_1' x_1^* + \xi_3' x_3^*$
for some $x_3^*\in B_{\ell_\infty}$ and $\xi_1', \xi_3'\in
S_\mathbb{C}$. By (\ref{eq:1sameargument2}) and (\ref{eq2:same4}),
\begin{eqnarray*}
v(h) &=& | x^*_1(f(\lambda_0x_0)) + \eta_1 y^*(\lambda_0x_0) +
\eta_2 \|g\| +\overline{\xi_1'}\xi_3'\|y\||\\ &=&|x_1^*(f(\lambda_0
x_0))+ \eta_1 y^*(\lambda_0 x_0) + \eta_2\|g\| +
\xi_3\|y\||.\end{eqnarray*} Since $x^*(x_0)=\overline{\lambda_0}$,
we get $\xi_1' = \overline{\lambda_0}$ and
$\xi_3'=\overline{\lambda_0}\xi_3$. Hence $x^* =
\overline{\lambda_0}z^*$ and $\{x_n^*\}$ converges weak-$*$ to
$\overline{\lambda_0} z^*$. Thus $h$ is a numerical strong peak
function at $(\lambda_0 x_0, \overline{\lambda_0} z^*)$.

The proof of the last case $\mathbb{N}={\rm supp}(x_0)$ is easy and
similar to the previous case. In either case, $\|f-h\|\le
3\epsilon$. This completes the proof.
\end{proof}

\section{Applications and  Numerical boundary}

The following proposition is a numerical version of Bishop's Theorem
\cite{CLS}.

\begin{prop}
Let $A$ be a subspace of $C_b(B_X:X)$. Suppose that the set of all
numerical strong peak functions in $A$ is dense in $A$. Then the
$\tau$-closure of the set of  all numerical strong peak points for
$A$ is the numerical Shilov boundary of $A$.
\end{prop}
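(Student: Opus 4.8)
The plan is to follow the classical Bishop--de Leeuw / Shilov boundary argument, adapted to the numerical setting. First I would set up the numerical Shilov boundary: define it as the smallest $\tau$-closed subset $\Gamma_0$ of $\Pi(X)$ that is a numerical boundary for $A$, i.e.\ $v(f) = \sup\{|x^*(f(x))| : (x,x^*)\in \Gamma_0\}$ for every $f\in A$. To make this meaningful I would first observe that the numerical Shilov boundary exists. This uses the transfer map $\varphi: f\mapsto \tilde f$, $\tilde f(x,x^*) = x^*(f(x))$, into $C_b(\Pi(X))$, an isometry onto a subspace $\tilde A$ when $\Pi(X)$ carries the $\tau$-topology (which is compact when, say, $\Pi(X)$ is $\tau$-compact; in general one works with the weak-$*$ compact ambient set $B_X\times B_{X^*}$ restricted appropriately, or argues via Zorn's lemma directly on $\tau$-closed numerical boundaries). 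The existence of a smallest closed boundary then follows from a standard intersection argument once one knows that the intersection of a decreasing chain of closed boundaries is again a boundary, which is where denseness of numerical strong peak functions will be used.

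The core of the argument has two inclusions. For one direction, let $P$ denote the $\tau$-closure of the set of numerical strong peak points. I claim $P$ is a numerical boundary: given $f\in A$ and $\epsilon>0$, by density choose a numerical strong peak function $h$ with $\|h-f\| < \epsilon$, say with numerical strong peak point $(x_0,x_0^*)$. Then $v(h) = |x_0^*(h(x_0))|$ with $(x_0,x_0^*)\in P$, so $\sup\{|x^*(f(x))| : (x,x^*)\in P\} \ge |x_0^*(f(x_0))| \ge |x_0^*(h(x_0))| - \epsilon = v(h) - \epsilon \ge v(f) - 2\epsilon$, using $|v(h)-v(f)|\le \|h-f\|$. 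Letting $\epsilon\to 0$ shows $P$ is a numerical boundary, hence contains the numerical Shilov boundary $\Gamma_0$.

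For the reverse inclusion I must show every numerical strong peak point lies in $\Gamma_0$; since $\Gamma_0$ is $\tau$-closed this gives $P\subseteq \Gamma_0$. Suppose $(x_0,x_0^*)$ is the numerical strong peak point of some $h\in A$ but $(x_0,x_0^*)\notin \Gamma_0$. Because $\Gamma_0$ is $\tau$-closed, there is a $\tau$-neighborhood $V$ of $(x_0,x_0^*)$ disjoint from $\Gamma_0$. Since $h$ is a numerical \emph{strong} peak function, there is $\delta>0$ such that $\{(x,x^*)\in\Pi(X) : |x^*(h(x))|\ge v(h)-\delta\}\subseteq V$. But $\Gamma_0$ is a numerical boundary, so $v(h) = \sup\{|x^*(h(x))| : (x,x^*)\in \Gamma_0\}$, which forces some $(x,x^*)\in\Gamma_0$ with $|x^*(h(x))| > v(h)-\delta$, hence $(x,x^*)\in V\cap\Gamma_0=\emptyset$, a contradiction. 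Therefore $(x_0,x_0^*)\in\Gamma_0$, and taking $\tau$-closures yields $P\subseteq\Gamma_0$. Combining the two inclusions gives $P = \Gamma_0$, i.e.\ the numerical Shilov boundary of $A$ equals the $\tau$-closure of the set of all numerical strong peak points.

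The main obstacle I anticipate is establishing existence of the numerical Shilov boundary cleanly, i.e.\ that there is a \emph{smallest} $\tau$-closed numerical boundary: the ambient space $\Pi(X)$ need not be $\tau$-compact in general, so one cannot directly quote the compact Shilov boundary theorem. The right fix is to prove that a decreasing chain of $\tau$-closed numerical boundaries has nonempty intersection which is again a numerical boundary --- and this is exactly where the density of numerical strong peak functions enters, via the observation that a numerical strong peak function forces its peak point into every numerical boundary (as in the reverse-inclusion argument above), so the peak points form a "core" that no boundary can avoid; an application of Zorn's lemma then produces the minimal boundary. Once existence is in hand, the two-inclusion argument above is routine.
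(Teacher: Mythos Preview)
Your two-inclusion argument is correct and is exactly what the paper does: show that the set $\Gamma$ of numerical strong peak points is a numerical boundary (by approximating an arbitrary $f$ with strong peak functions), and show that every $\tau$-closed numerical boundary must contain every numerical strong peak point (your neighborhood/$\delta$ argument, which the paper leaves implicit). The one thing to fix is your framing of the ``main obstacle'': you do not need to establish existence of the numerical Shilov boundary in advance via Zorn's lemma or chain arguments. Simply run your reverse-inclusion argument with $\Gamma_0$ replaced by an \emph{arbitrary} $\tau$-closed numerical boundary; then your two inclusions say precisely that (i) $\overline{\Gamma}^{\,\tau}$ is a $\tau$-closed numerical boundary and (ii) it is contained in every $\tau$-closed numerical boundary, so $\overline{\Gamma}^{\,\tau}$ \emph{is} the smallest one, and existence drops out for free. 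The paper proceeds in exactly this order and never invokes Zorn or a separate existence argument.
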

\begin{proof} Let \[\Gamma:=\{(z, z^*)\in \Pi(X)~:~(z, z^*)~\mbox{is a
numerical strong peak points for}~A\}.\] Notice that every
$\tau$-closed numerical boundary of $A$ contains $\Gamma$. Hence the
numerical Shilov boundary of $A$ contains all points of
$\tau$-closure of $\Gamma$. For the reverse inclusion, let $f\in A$
be fixed. Then there exists a sequence $\{f_n\}_{n=1}^{\infty}$ of
numerical strong peak functions in $A$ such that $\|f_n-f\|\to
\infty$. Hence $|v(f_n)-v(f)|\to \infty$. Let $(x_n,
x_n^*)_{n=1}^{\infty}$ be a sequence of numerical strong peak points
in $\Pi(X)$ such that $v(f_n)=|x_n^*(f(x_n))|$ for every $n$. Then
$v(f)=\lim_{n\to \infty}{|x_n^*(f_N(x_n))|}$. Note that for every
$n$,
\begin{align*}|x_n^*(f_N(x_n))|-|x_n^*((f_n-f)(x_n))|&\leq|x_n^*(f(x_n))|\\&\leq
|x_n^*(f_N(x_n))|+|x_n^*((f_n-f)(x_n))|,\end{align*} shows that
$v(f)=\lim_{n\to \infty}{|x_n^*(f(x_n))|}=\sup\{|z^*(f(z))|~:~(z,
z^*)\in \Gamma \}$. So $\Gamma$ is a numerical boundary for A. Thus
the numerical Shilov boundary of $A$ is contained in the
$\tau$-closure of $\Gamma$.
\end{proof}

\begin{prop}Let $A$ be a subspace of $C_b(B_X:X)$ which contains all
functions of the forms:
\[ x^*\otimes y, \ \ \ \ \ 1\otimes z, \ \ \ \ \ \forall x^*\in
X^*,\ \forall y, z\in X.\] Suppose that $X$ is smooth and locally
uniformly convex.

Then  $\Pi(X)$ is the set of all numerical strong peak points for
$A$ and $\Pi(X)$ is the numerical Shilov boundary of $A$.
\end{prop}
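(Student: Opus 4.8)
The plan is to produce, for each $(x_0,x_0^*)\in\Pi(X)$, an explicit numerical strong peak function in $A$ whose numerical peak point is $(x_0,x_0^*)$. The natural candidate is the affine map
\[ f(x):=\big(1+x_0^*(x)\big)\,x_0=(1\otimes x_0)+(x_0^*\otimes x_0),\]
which belongs to $A$ by hypothesis (as $A$ is a subspace containing all $x^*\otimes y$ and all $1\otimes z$). First one checks the numerical radius: for $(x,x^*)\in\Pi(X)$ we have $\|x\|=1$, so $|x_0^*(x)|\le1$ and $|x^*(x_0)|\le1$, hence $|x^*(f(x))|=|1+x_0^*(x)|\,|x^*(x_0)|\le2$, and the value $2$ is attained at $(x_0,x_0^*)$ since $x_0^*(x_0)=1$. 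Thus $v(f)=2$.

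Next I would prove that $f$ is a numerical strong peak function at $(x_0,x_0^*)$. If $(x_n,x_n^*)\in\Pi(X)$ and $|x_n^*(f(x_n))|\to 2$, then $|1+x_0^*(x_n)|\to2$ and $|x_n^*(x_0)|\to1$; writing $w_n=x_0^*(x_n)$ with $|w_n|\le1$, the inequalities $|1+w_n|\le1+|w_n|\le2$ force first $|w_n|\to1$ and then ${\rm Re}\,w_n\to1$, whence $w_n\to1$, i.e. $x_0^*(x_n)\to1$. Therefore $\|x_n+x_0\|\ge|x_0^*(x_n)+1|\to2$, and since $\|x_n+x_0\|\le2$ we get $\|x_n+x_0\|\to2$; local uniform convexity of $X$ (applied to the point $x_0\in S_X$ and the sequence $\{x_n\}$) yields $\|x_n-x_0\|\to0$. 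Consequently $x_n^*(x_0)=x_n^*(x_n)+x_n^*(x_0-x_n)\to1$, so every weak-$*$ cluster point $x^*$ of $\{x_n^*\}$ satisfies $x^*(x_0)=1$ with $\|x^*\|\le1$, hence norms $x_0$, hence equals $x_0^*$ by smoothness of $X$; since $B_{X^*}$ is weak-$*$ compact, a sequence in it with a unique cluster point converges, so $x_n^*\to x_0^*$ weak-$*$. Thus $(x_n,x_n^*)\to(x_0,x_0^*)$ in $\tau$. (The same scheme, run with a single point in place of a sequence, also shows $(x_0,x_0^*)$ is the unique maximizer of $|x^*(f(x))|$ on $\Pi(X)$, i.e. $f$ is a numerical peak function.) Hence every $(x_0,x_0^*)\in\Pi(X)$ is a numerical strong peak point for $A$; since numerical strong peak points lie in $\Pi(X)$ by definition, the set of numerical strong peak points is exactly $\Pi(X)$.

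Finally, for the numerical Shilov boundary: $\Pi(X)$ is a numerical boundary of $A$ directly from the definition of $v$, and it is $\tau$-closed. If $\Gamma\subseteq\Pi(X)$ is any $\tau$-closed numerical boundary and $(x_0,x_0^*)\in\Pi(X)$, choose a sequence in $\Gamma$ along which $|x^*(f(x))|\to v(f)$ (possible because $\Gamma$ is a numerical boundary); the strong peak property just established forces this sequence to $\tau$-converge to $(x_0,x_0^*)$, so $(x_0,x_0^*)\in\Gamma$. Hence $\Gamma=\Pi(X)$, so $\Pi(X)$ is the smallest $\tau$-closed numerical boundary, i.e. the numerical Shilov boundary of $A$. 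The only point requiring care is keeping the two hypotheses in their proper roles — local uniform convexity of $X$ governs the norm-convergence of the first coordinates and the uniqueness of the peak point's first coordinate, while smoothness of $X$ governs the weak-$*$ identification of the second coordinate — together with the routine weak-$*$ compactness argument for convergence of a sequence with a single cluster point.
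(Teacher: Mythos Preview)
Your proof is correct and follows essentially the same approach as the paper: the paper also uses the function $h(x)=\frac{x_0^*(x)+1}{2}\,x_0$, which is your $f$ divided by $2$, and deduces the numerical strong peak property from local uniform convexity (for the first coordinate) and smoothness (for the second). The only difference is that you spell out all the details the paper leaves implicit, and you handle the Shilov boundary assertion directly by the ``any closed numerical boundary contains every numerical strong peak point'' argument, whereas the paper cites its Proposition~3.1 for this step.
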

\begin{proof}
Let $(x_0, x_0^*)\in\Pi(X)$. Then $g(x) = (x^*_0(x)+1)/2$ is a
strong peak function at $x_0$. Let $h(x): = g(x)x_0\in A$. Then $h$
is a numerical strong peak function at $(x_0, x_0^*)$. So $\Pi(X)$
is the set of all numerical strong peak points for $A$. The second
assertion follows from Theorem 2.19, Proposition 3.1, and the first.
\end{proof}

The following is an application of the numerical boundary to the
density of numerical radius attaining holomorphic functions. Similar
application of the norming subset to the density of norm attaining
holomorphic functions is given in \cite{CLS}. We use the
Lindenstrauss method \cite{Li}.

\begin{thm}\label{thm:nradius}
Suppose that $X$ is a Banach space and there is a numerical boundary
$\Gamma\subset \Pi(X)$ of $A(B_X:X)$ such that for every $(x,
x^*)\in \Gamma$, $x$ is a strong peak point of $A(B_X)$. Then the
set of numerical radius attaining elements  in $A(B_X:X)$ is dense.
\end{thm}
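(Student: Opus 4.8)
The plan is to run a Lindenstrauss-type iteration, parallel to the Bishop-type theorem of \cite{CLS} but carried over to the numerical setting through the scalarization $f\mapsto\tilde f$, $\tilde f(x,x^*)=x^*(f(x))$, with the numerical boundary $\Gamma$ playing the role of a norming subset. Fix $f\in A(B_X:X)$ and $\epsilon>0$. First I would fix summable parameters, say $\eta_n=\epsilon\,16^{-n}$ and $t_n=\epsilon\,2^{-n-2}$, so that $\sum_n t_n<\epsilon$, $t_n>\eta_n$, while $\beta_n:=(\eta_n+\eta_{n+1})/t_n\to 0$ with $\beta_n<1$ for every $n$; this balancing is the key bookkeeping. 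Put $f_1=f$. Given $f_n$, since $\Gamma$ is a numerical boundary I pick $(x_n,x_n^*)\in\Gamma$ with $|x_n^*(f_n(x_n))|>v(f_n)-\eta_n$; since $x_n$ is a strong peak point of $A(B_X)$ I pick a strong peak function $p_n\in A(B_X)$ with $\|p_n\|=1=p_n(x_n)$; and I set
\[ f_{n+1}(y):=f_n(y)+t_n\theta_n\,p_n(y)^{k_n}\,x_n, \]
where $\theta_n\in S_{\mathbb C}$ is chosen so that $|x_n^*(f_n(x_n))+t_n\theta_n|=|x_n^*(f_n(x_n))|+t_n$, and $k_n\in\mathbb N$ is taken large enough that $(1-\beta_n)^{1/k_n}>1-\delta_n$, where $\delta_n>0$ is furnished by strong peaking so that $\{y\in B_X:|p_n(y)|\geq 1-\delta_n\}$ has diameter $<2^{-n}$ (such $k_n$ exists because $(1-\beta_n)^{1/k_n}\to1$). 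A routine check shows $p_n(\cdot)^{k_n}x_n\in A(B_X:X)$ in each of the cases $A_b,A_u,A_{wu}$.

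Since $\|f_{n+1}-f_n\|\leq t_n$, the sequence converges uniformly on $B_X$ to some $g\in A(B_X:X)$ with $\|g-f\|<\epsilon$. From $p_n(x_n)=1=x_n^*(x_n)$ and the choice of $\theta_n$ one gets $|x_n^*(f_{n+1}(x_n))|=|x_n^*(f_n(x_n))|+t_n>v(f_n)-\eta_n+t_n$, hence $v(f_n)-\eta_n+t_n\leq v(f_{n+1})\leq v(f_n)+t_n$; so the numbers $v(f_n)$ are bounded and nondecreasing, and by continuity of $v$ they converge to $v(g)$ with $v(g)-v(f_n)\leq\sum_{m\geq n}t_m$. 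Combining $|x_n^*(g(x_n))-x_n^*(f_{n+1}(x_n))|\leq\sum_{m>n}t_m$ with $|x_n^*(g(x_n))|\leq v(g)$ yields $|x_n^*(g(x_n))|\to v(g)$; so far $g$ only \emph{almost} attains its numerical radius along $(x_n,x_n^*)$.

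The step I expect to be the main obstacle is upgrading this to genuine attainment, since $\Pi(X)$ need not be compact; this is exactly where the strong-peak hypothesis and the balancing of $\eta_n$ against $t_n$ are used. From $(x_{n+1},x_{n+1}^*)\in\Pi(X)$ one has $|x_{n+1}^*(f_n(x_{n+1}))|\leq v(f_n)$, while $f_{n+1}(x_{n+1})=f_n(x_{n+1})+t_n\theta_n p_n(x_{n+1})^{k_n}x_n$ and $|x_{n+1}^*(f_{n+1}(x_{n+1}))|>v(f_{n+1})-\eta_{n+1}\geq v(f_n)-\eta_n+t_n-\eta_{n+1}$; subtracting gives $t_n\bigl(1-|p_n(x_{n+1})|^{k_n}\bigr)<\eta_n+\eta_{n+1}$, i.e. $|p_n(x_{n+1})|^{k_n}>1-\beta_n$, whence $|p_n(x_{n+1})|>(1-\beta_n)^{1/k_n}>1-\delta_n$, so $x_{n+1}$ lies in a set of diameter $<2^{-n}$ that also contains $x_n$. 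Thus $\{x_n\}$ is norm-Cauchy; let $x_0=\lim_n x_n\in S_X$ and let $x_0^*$ be a weak-$*$ cluster point of $\{x_n^*\}$ in $B_{X^*}$. Along a subnet $x_n^*\to x_0^*$ weak-$*$, and since $x_n^*(x_n)=1$ and $x_n\to x_0$ in norm we get $x_0^*(x_0)=1$, so $(x_0,x_0^*)\in\Pi(X)$. Finally, by continuity of $g$, $g(x_n)\to g(x_0)$ in norm, hence $x_n^*(g(x_n))\to x_0^*(g(x_0))$ along the subnet, which forces $|x_0^*(g(x_0))|=\lim_n|x_n^*(g(x_n))|=v(g)$. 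Therefore $g$ attains its numerical radius at $(x_0,x_0^*)$ and $\|g-f\|<\epsilon$, proving the density.
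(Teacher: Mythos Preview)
Your proof is correct and follows essentially the same Lindenstrauss-type iteration as the paper: pick $(x_n,x_n^*)\in\Gamma$ nearly realizing $v(f_n)$, perturb by a high power of a strong peak function at $x_n$ times a fixed vector, and use the resulting lower bound on $|p_n(x_{n+1})|$ to force $\{x_n\}$ to be Cauchy, then pass to a weak-$*$ cluster point of $\{x_n^*\}$. The only cosmetic differences are that the paper perturbs by $\epsilon_k\,\tilde\varphi_{\alpha_k}(x)\,f_k(x_{\alpha_k})$ (so no separate phase $\theta_n$ is needed) and derives the slightly stronger estimate $|\tilde\varphi_{\alpha_j}(x_{\alpha_k})|>1-1/j$ for \emph{all} $j<k$, whereas you only control consecutive terms; both routes yield the Cauchy property equally well.
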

\begin{proof}
We may assume that $\Gamma= \{(x_\alpha, x_\alpha^*)\}_\alpha$ and
$\varphi_\alpha(x_\alpha)=1$ for each $\alpha$, where each
$\varphi_\alpha$ is a strong peak function at $x_\alpha$ in
$A(B_X)$. Let $f\in A(B_X:X)$ with $\|f\|=1$ and $\epsilon$ with
$0<\epsilon<1/3$ be given. Without loss of generality, we may assume
that $v(f)>0$. We choose a monotonically decreasing sequence
$\{\epsilon_k\}$ of positive numbers so that
\begin{equation}\label{eq1} 2\sum_{i=1}^\infty \epsilon_i <\epsilon,
\ \ \ 2\sum_{i=k+1}^\infty \epsilon_i < \epsilon_k^2,\ \ \
\epsilon_k < \frac 1{10k},\ \ \ k=1,2,\ldots\end{equation}

We next choose inductively sequences $\{f_k\}_{k=1}^\infty$,
$\{(x_{\alpha_k}, x^*_{\alpha_k})\}_{k=1}^\infty$ satisfying
\begin{align}
\label{eq2}&f_1=f\\
\label{eq3}&|x_{\alpha_k}^* (f_k(x_{\alpha_k}))| \ge v(f_k)-\epsilon_k^2\\
\label{eq4}&f_{k+1}(x) = f_k(x) + \epsilon_k
\tilde{\varphi}_{\alpha_k}(x)\cdot f_k(x_{\alpha_k})\\
\label{eq9}|\tilde{\varphi}_{\alpha_k}(x)|&>1-1/k \ \ \mbox{ implies
} \ \ \|x-x_{\alpha_k}\|<1/k,
\end{align} where  $\tilde{\varphi}_{\alpha_j}$ is $\varphi^{n_j}_{\alpha_j}$ for some positive integer $n_j$.
Having chosen these sequences, we verify the following hold:
\begin{align}
\label{eq5}\|f_j - f_k\|&\le 2\sum_{i=j}^{k-1} \epsilon_i,\ \ \  \|f_k\|\le 4/3, &j<k,\ \ \  k= 2, 3, \ldots\\
\label{eq6}v(f_{k+1})& \ge v(f_k)+\epsilon_kv(f_k) - 2\epsilon_k^2,& k=1, 2,\ldots\\
\label{eq8}|\tilde{\varphi}_{\alpha_j}(x_{\alpha_k})|&>1-1/j, &
j<k,\ \ \ k=2, 3,\ldots.
\end{align}

Assertion (\ref{eq5}) is easy by using induction on $k$. By
(\ref{eq3}) and (\ref{eq4}), \begin{align*} v(f_{k+1})&\ge
|x_{\alpha_k}^*(f_{k+1} (x_{\alpha_k}))| =
|x_{\alpha_k}^*(f_k(x_{\alpha_k}))|~|1+\epsilon_k
\tilde{\varphi}_{\alpha_k}(x_{\alpha_k})| \\ &=
|x_{\alpha_k}^*(f_k(x_{\alpha_k}))|(1+\epsilon_k)\ge
(v(f_k)-\epsilon_k^2)(1+\epsilon_k)\\&\ge
v(f_k)+\epsilon_kv(f_k)-2\epsilon_k^2,\end{align*} so the relation
(\ref{eq6}) is proved.  For $j<k$, by the triangle inequality,
(\ref{eq3}) and (\ref{eq5}),  we have
\begin{align*} |x_{\alpha_k}^*(f_{j+1}(x_{\alpha_k}))|&\ge
|x_{\alpha_k}^*(f_k(x_{\alpha_k}))|-\|f_k -f_{j+1}\|\\&\ge
v(f_k)-\epsilon^2_k-2\sum_{i=j+1}^{k-1}\epsilon_j \ge
v(f_{j+1})-2\epsilon_j^2.\end{align*} Hence by (\ref{eq4}) and
(\ref{eq6}), \begin{align*}
\epsilon_j|\tilde{\varphi}_{\alpha_j}(x_{\alpha_k})| \cdot v(f_j) +
v(f_j)&\ge |x_{\alpha_k}^*(f_{j+1}(x_{\alpha_k}))|\ge v(f_{j+1})-2\epsilon^2_j\\
&\ge v(f_j) + \epsilon_j v(f_j)-4\epsilon_j^2,\end{align*} so that
\[ |\tilde{\varphi}_{\alpha_j}(x_{\alpha_k})| \ge 1-4\epsilon_j>1-1/j\] and
this proves (\ref{eq8}). Let $\hat{f}\in A(B_X:X)$ be the limit of
$\{f_k\}$ in the norm topology. By (\ref{eq1}) and (\ref{eq5}),
$\|\hat{f}-f\|=\lim_n \|f_n-f_1\|\le2\sum_{i=1}^\infty \epsilon_i\le
\epsilon$ holds. The relations (\ref{eq9}) and (\ref{eq8}) mean that
the sequence $\{x_{\alpha_k}\}$ converges to a point $\tilde{x}$,
say and  by (\ref{eq3}), we have $v(\hat{f})=\lim_n v(f_n) = \lim_n
|x_{\alpha_n}^*(f_n(x_{\alpha_n}))| =
|\tilde{x}^*\hat{f}(\tilde{x})|$, where $\tilde{x}^*$ is the
weak-$*$ limit point of $\{x_{\alpha_k}^*\}_k$ in $B_{X^*}$. Then it
is easy to see that $|\tilde{x}^*(\tilde{x})|=1$. Hence $\hat{f}$
attains its numerical radius. This concludes the proof.
\end{proof}

\begin{cor}
Let $X$ be a locally uniformly convex Banach space. Then the set of
numerical radius attaining elements in $A(B_X:X)$ is dense.
\end{cor}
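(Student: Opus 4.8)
The plan is to obtain this as an immediate consequence of Theorem~\ref{thm:nradius}. According to that theorem, it is enough to produce a numerical boundary $\Gamma\subset\Pi(X)$ of $A(B_X:X)$ with the property that for every $(x,x^*)\in\Gamma$, the first coordinate $x$ is a strong peak point of $A(B_X)$. The natural candidate is $\Gamma=\Pi(X)$ itself: by the very definition of the numerical radius, $v(f)=\sup\{|x^*(f(x))|:(x,x^*)\in\Pi(X)\}$ for every $f\in A(B_X:X)$, so $\Pi(X)$ is trivially a numerical boundary. Hence the whole task reduces to showing that every $x\in S_X$ (equivalently, every $x$ occurring as a first coordinate of some $(x,x^*)\in\Pi(X)$) is a strong peak point of $A(B_X)$.

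To check this, I would fix $x\in S_X$ and, by the Hahn--Banach theorem, choose $x^*\in S_{X^*}$ with $x^*(x)=1$. Then $g(y):=\tfrac{1}{2}\bigl(1+x^*(y)\bigr)$ belongs to $A(B_X)$ with $\|g\|=1=g(x)$, and I claim $g$ is a strong peak function at $x$. Indeed, if $\{y_n\}\subset B_X$ satisfies $|g(y_n)|\to1$, then $|1+x^*(y_n)|\to2$; since $|x^*(y_n)|\le1$, a short elementary estimate forces $x^*(y_n)\to1$ (not merely $\mathrm{Re}\,x^*(y_n)\to1$). Consequently $\|y_n+x\|\ge|x^*(y_n)+1|\to2$, and local uniform convexity of $X$ yields $\|y_n-x\|\to0$. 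Thus $x\in\rho A(B_X)$, i.e.\ $x$ is a strong peak point of $A(B_X)$.

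Putting the two steps together, $\Gamma=\Pi(X)$ is a numerical boundary of $A(B_X:X)$ all of whose first coordinates are strong peak points of $A(B_X)$, so Theorem~\ref{thm:nradius} applies directly and gives the density of numerical radius attaining elements in $A(B_X:X)$. I do not expect any real obstacle here; the only point needing a little care is the elementary complex estimate in the middle paragraph --- that $|1+x^*(y_n)|\to2$ together with $|x^*(y_n)|\le1$ implies $x^*(y_n)\to1$ --- which is precisely what is needed to invoke local uniform convexity in the form stated in the paper.
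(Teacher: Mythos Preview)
Your proposal is correct and matches the paper's own proof essentially line for line: the paper also takes $\Gamma=\Pi(X)$, uses the same function $g(y)=\tfrac12(1+x^*(y))$ to show each $x\in S_X$ is a strong peak point of $A(B_X)$, and then invokes Theorem~\ref{thm:nradius}. The only difference is that you spell out the elementary estimate $|1+x^*(y_n)|\to 2\Rightarrow x^*(y_n)\to 1$, which the paper simply asserts.
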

\begin{proof}
Let $\Gamma=\Pi(X)$ and notice that every element in $S_X$ is a
strong peak point for $A_u(B_X)$. Indeed, if $x\in S_X$, choose
$x^*\in S_{X^*}$ so that $x^*(x)=1$. Set $f(y) = \frac{x^*(y)+1}2$
for $y\in B_X$. Then $f\in A(B_X)$ and $f(x)=1$. If $\lim_n|f(x_n)|=
1$ for some sequence $\{x_n\}$ in $B_X$, then $\lim_n x^*(x_n)=1$.
Since $|x^*(x_n) + x^*(x)|\le \|x_n + x\|\le 2$ for every $n$,
$\|x_n + x\|\to 2$ and $\|x_n -x\|\to 0$ as $n\to \infty$. By
Theorem~\ref{thm:nradius}, we get the desired result.
\end{proof}

It was shown in \cite{CHL} that if a Banach sequence space $X$ is
locally uniformly $c$-convex and order continuous, then the set of
all strong peak points for $A(B_X)$ is dense in $S_X$. Therefore,
the set of all strong peak points for $A(B_X)$ is dense in $S_X$.
For the characterizations of the local uniform $c$-convexity in
function spaces, see \cite{Lee3}.

\begin{cor}\label{cor:analyticcase}
Suppose that $X$ is a locally uniformly $c$-convex, order continuous
Banach sequence space. Then the set of numerical radius attaining
elements  in $A_u(B_X:X)$ is dense.
\end{cor}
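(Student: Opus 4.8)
The plan is to deduce this from Theorem~\ref{thm:nradius}. By the result of \cite{CHL} recalled just above, the set $\rho A_u(B_X)$ of strong peak points for $A_u(B_X)$ is norm-dense in $S_X$, so all that is needed to invoke Theorem~\ref{thm:nradius} is a numerical boundary $\Gamma\subset\Pi(X)$ of $A_u(B_X:X)$ every one of whose elements $(x,x^*)$ has $x\in\rho A_u(B_X)$. The natural candidate is
\[ \Gamma:=\{(x,x^*)\in\Pi(X):x\in\rho A_u(B_X)\}, \]
for which the condition on first coordinates is built in; hence the real content of the proof is to show that this $\Gamma$ is a numerical boundary, i.e. that $v(f)=\sup\{|x^*(f(x))|:(x,x^*)\in\Gamma\}$ for every $f\in A_u(B_X:X)$.

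Only the inequality $\sup_\Gamma|x^*(f(x))|\ge v(f)$ needs work. Fix $f$ and $\eps>0$ and pick $(y,y^*)\in\Pi(X)$ with $|y^*(f(y))|>v(f)-\eps$. If $y$ already lies in $\rho A_u(B_X)$ we are done immediately: $J(y):=\{w^*\in S_{X^*}:w^*(y)=1\}$ is weak-$*$ compact and $w^*\mapsto w^*(f(y))$ is weak-$*$ continuous, so some $z^*\in J(y)$ satisfies $|z^*(f(y))|=\max_{w^*\in J(y)}|w^*(f(y))|\ge|y^*(f(y))|$ and $(y,z^*)\in\Gamma$. In general one approximates $y$ inside $\rho A_u(B_X)$ and rebuilds a supporting functional. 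Since $X$ is an order-continuous Banach sequence space with $1$-unconditional canonical basis, the coordinate projections $P_N$ are norm-one and $\|P_Nx-x\|\to0$ for every $x$; put $z_N:=P_Ny/\|P_Ny\|$, a finitely supported unit vector with $z_N\to y$. Using the structural description of the strong peak points of $A_u(B_X)$ from \cite{CHL} one checks $z_N\in\rho A_u(B_X)$ for all large $N$. Then, using $1$-unconditionality, choose $z_N^*\in J(z_N)$ whose phases on $\supp(z_N)$ match those of $y^*$ and which is sign-optimal for $f(z_N)$ on the remaining coordinates; a routine estimate with $f(z_N)\to f(y)$ and $\sum_i|y_i^*|\,|y_i|=1$ yields $\liminf_N|z_N^*(f(z_N))|\ge|y^*(f(y))|>v(f)-\eps$. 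As $(z_N,z_N^*)\in\Gamma$ and $\eps$ is arbitrary, $\Gamma$ is a numerical boundary.

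With $\Gamma$ as above, Theorem~\ref{thm:nradius} applies and gives that the numerical radius attaining elements in $A_u(B_X:X)$ are dense. The main obstacle is the second paragraph: one cannot keep $y^*$ after moving $y$ into $\rho A_u(B_X)$, because $(z_N,y^*)$ need not lie in $\Pi(X)$, and $\Gamma$ is in general \emph{not} $\tau$-dense in $\Pi(X)$ (already for $X=\ell_\infty^n$ one sees this), so being a numerical boundary cannot follow from abstract density alone; the supporting functional of the nearby strong peak point has to be genuinely reconstructed, and this is precisely where the order-continuity and $1$-unconditionality of the sequence space and the description of $\rho A_u(B_X)$ in \cite{CHL} are used. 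Equivalently, the same input is what shows that the upper-semicontinuous function $x\mapsto\max_{w^*\in J(x)}|w^*(f(x))|$ on $S_X$ attains its supremum along the dense set $\rho A_u(B_X)$, which is the one nontrivial step.
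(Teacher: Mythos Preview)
Your overall strategy coincides with the paper's: take
\[
\Gamma=\{(x,x^*)\in\Pi(X):x\in\rho A_u(B_X)\},
\]
check that $\Gamma$ is a numerical boundary of $A_u(B_X:X)$, and then invoke Theorem~\ref{thm:nradius}. The divergence is entirely in how the numerical-boundary step is handled.

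The paper does not use any sequence-space structure for that step. It simply combines the density of $\rho A_u(B_X)$ in $S_X$ (from \cite{CHL}) with Theorem~2.5 of Rodr\'iguez-Palacios \cite{Pal}, which asserts, for an \emph{arbitrary} Banach space $X$, that if $D\subset S_X$ is norm-dense then $\{(x,x^*)\in\Pi(X):x\in D\}$ is a numerical boundary for every uniformly continuous $f:S_X\to X$. Since elements of $A_u(B_X:X)$ are uniformly continuous, this applies directly. The order continuity and local uniform $c$-convexity are used only to get density of $\rho A_u(B_X)$, not to rebuild supporting functionals.

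Your direct argument is therefore unnecessary, and as written it has genuine gaps. First, the assertion that $z_N=P_Ny/\|P_Ny\|\in\rho A_u(B_X)$ for large $N$ does not follow from density of $\rho A_u(B_X)$; density tells you there is \emph{some} strong peak point near $y$, not that your particular truncation is one, and the ``structural description'' you allude to in \cite{CHL} is not stated here and would have to be quoted precisely. Second, the construction of $z_N^*$ (matching phases on $\supp(z_N)$ and ``sign-optimal'' elsewhere) together with the claimed limit $\liminf_N|z_N^*(f(z_N))|\ge|y^*(f(y))|$ is asserted but not proved; it requires controlling both $\|z_N^*\|$ and $z_N^*(z_N)$ simultaneously while comparing $z_N^*$ to $y^*$ on the image of $f$, and this is not routine. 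Finally, your closing diagnosis that the sequence-space structure is ``precisely where'' the argument lives is misleading: the result of \cite{Pal} shows that uniform continuity of $f$ alone suffices, so no coordinate manipulations are needed at all.
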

\begin{proof}
Let $\Gamma= \{(x, x^*): x \mbox{ is a strong peak point of }
A_u(B_X)\}$. Then by \cite[Theorem~2.5]{Pal} and the remark above
the Corollary~\ref{cor:analyticcase}, $\Gamma$ is a numerical
boundary of $A_u(B_X:X)$. Hence the proof is complete by
Theorem~\ref{thm:nradius}.
\end{proof}

\section{Negative Results for Denseness of Numerical Peak Holomorphic
Functions}

It is observed in \cite{CHL} that there is no strong peak function
in $A_{wu}(B_{L_1[0,1]})$. The following
 shows that there is no numerical strong peak function in
$A_{wu}(B_{L_1[0,1]}:L_1[0,1])$.

\begin{prop}
There is no numerical strong peak function in
$A_{wu}(B_{L_1[0,1]}:L_1[0,1])$.
\end{prop}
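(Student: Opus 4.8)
The plan is to show that no $f\in A_{wu}(B_{L_1}:L_1)$ can be a numerical strong peak function by exploiting the fact that $L_1[0,1]$ is "locally nonconvex" in a strong sense: one can split the interval $[0,1]$ into pieces and move mass between them without changing norms, so that numerical radius is attained along many mutually far-apart pairs in $\Pi(L_1)$. Concretely, fix $f\in A_{wu}(B_{L_1}:L_1)$ with $\|f\|=1$ (we may normalize), and suppose toward a contradiction that $f$ is a numerical strong peak function at $(x_0,x_0^*)\in\Pi(L_1)$. First I would recall that $\Pi(L_1)$ is described by pairs $(g, \sigma)$ where $g\in S_{L_1}$ and $\sigma\in S_{L_\infty}$ satisfies $\sigma = \overline{\operatorname{sign}(g)}$ a.e.\ on $\operatorname{supp}(g)$, with $\sigma$ arbitrary of modulus $\le 1$ off $\operatorname{supp}(g)$.

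The key step is a perturbation/translation construction: because $f$ is weakly uniformly continuous on $B_{L_1}$, it is in particular uniformly continuous, and its Taylor coefficients are weakly uniformly continuous homogeneous polynomials. The crucial structural fact (as used for the analogous statement about $A_{wu}(B_{L_1})$ in \cite{CHL}) is that weak uniform continuity forces $f$ to be "insensitive" to fine oscillations: given a measure-preserving rearrangement $\phi$ of $[0,1]$ and the induced isometry $U_\phi$ on $L_1$, one does not in general have $f\circ U_\phi = U_\phi\circ f$, but one can still produce, for any $(x_0,x_0^*)$ at which the numerical radius is attained, a sequence $\{(x_k,x_k^*)\}$ in $\Pi(L_1)$ with $|x_k^*(f(x_k))|\to v(f)$ but $\{(x_k,x_k^*)\}$ not $\tau$-convergent to $(x_0,x_0^*)$. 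I would build this sequence by taking $x_k$ to be a perturbation of $x_0$ obtained by chopping a small piece of mass of $x_0$ and relocating it (say, reflecting it within a null-influence region dictated by weak uniform continuity), together with the functional $x_k^*$ adjusted accordingly on the new support; weak uniform continuity guarantees $\|f(x_k)-f(x_0)\|\to 0$ as the relocated mass shrinks, so $|x_k^*(f(x_k))|\to|x_0^*(f(x_0))| = v(f)$, while the $x_k$ (or the $x_k^*$) stay bounded away from $x_0$ (resp. $x_0^*$) in the relevant topology. This contradicts the strong peak property.

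The main obstacle will be making the relocation argument robust: one must choose the perturbation so that (i) $x_k$ remains in $S_{L_1}$ and the pair $(x_k, x_k^*)$ remains in $\Pi(L_1)$, (ii) $\|x_k - x_0\|$ (or $\|x_k^* - x_0^*\|$, or at least non-$\tau$-convergence) is bounded below, and (iii) $|x_k^*(f(x_k))| \to v(f)$, which is where weak uniform continuity is essential and where one must be careful, since merely being in $A_b$ would not suffice. A clean way to handle (iii) is: since $f$ is weakly uniformly continuous, for every $\eta>0$ there is a weak neighbourhood $W$ of $0$ such that $\|x-y\|_{L_1}$ being "weakly small" (i.e.\ $x-y\in W$) implies $\|f(x)-f(y)\|<\eta$; then choosing the relocated mass to have small $L_1$ norm and to be supported where test functionals are controlled puts $x_k - x_0$ eventually in $W$. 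The existence of a suitable far-apart target for the relocated mass uses that $L_1[0,1]$ is atomless, so $\operatorname{supp}(x_0)$ has a complement of positive measure or can be subdivided, giving room to move mass to a set $E$ with $\|x_0\|_{L_1(E)}$ small yet $\chi_E$ detected by a norm-one functional $\ne x_0^*$; adjusting $x_k^*$ on $E$ to have modulus one there then keeps $(x_k,x_k^*)\in\Pi(L_1)$ while forcing $\|x_k^* - x_0^*\|_{L_\infty}\ge 1$ for all $k$, which already defeats $\tau$-convergence of the second coordinate. Assembling these estimates yields the contradiction, completing the proof.
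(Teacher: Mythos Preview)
There is a genuine gap in your plan, centered on the $\tau$-topology. Recall that $\tau$ is norm on the first coordinate but \emph{weak-$*$} on the second. Your final step asserts that making $\|x_k^*-x_0^*\|_{L_\infty}\ge 1$ ``already defeats $\tau$-convergence of the second coordinate,'' but it does not: if $x_k^*$ differs from $x_0^*$ only on a set $E_k$ of small measure, then $x_k^*\to x_0^*$ weak-$*$ in $L_\infty$, regardless of the sup-norm distance. Coupled with your choice that the relocated mass have small $L_1$ norm (so $x_k\to x_0$ in norm), your sequence actually $\tau$-converges to $(x_0,x_0^*)$ and produces no contradiction. The tension you have not resolved is that you need $x_k-x_0$ to be \emph{weakly} small (so that weak uniform continuity gives $f(x_k)\to f(x_0)$) yet \emph{norm}-large (so that $x_k\not\to x_0$), and ``chopping off a small piece of mass'' cannot do both.

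The paper's proof gives the clean fix: set $y_n=(1+r_n)x_0/\|(1+r_n)x_0\|_1$ with $r_n$ the Rademacher functions. Then $r_n x_0\to 0$ weakly in $L_1$ while $\|r_n x_0\|_1=1$, so $y_n\to x_0$ weakly but $\liminf_n\|y_n-x_0\|_1\ge 1$. Crucially, since $1+r_n\ge 0$, the sign of $y_n$ agrees with the sign of $x_0$ on $\operatorname{supp}(x_0)$, so one may keep the \emph{same} functional $x_0^*$ and still have $(y_n,x_0^*)\in\Pi(L_1)$; there is no need to perturb the functional at all. Weak uniform continuity then gives $|x_0^*(f(y_n))|\to v(f)$, contradicting the strong peak property. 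Your outline can be repaired along these lines, but as written the relocation scheme does not produce a sequence that fails to $\tau$-converge.
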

\begin{proof}
Let $\{r_n\}_{n=1}^\infty$ be Rademacher functions on $[0,1]$. We
shall use the following basic fact observed in \cite{LT}:
\[\lim_{n\to \infty} \|x+ r_n x\|_1= \|x\|_1,\ \ \ \ \forall x\in
L_1[0,1].\] Notice also that if we let $x_n = (1+r_n)x$ for $n\ge 1$
and for some $x\in X$, then $x_n$ weakly converges to $x$.

Suppose that there is a numerical strong peak function $f\in
 A_{wu}(B_{L_1[0,1]}:L_1[0,1])$ at $(x, x^*)\in \Pi(L_1[0,1])$. Then
\[ 1= \int_0^1 x(t)x^*(t)\, dt = \int_0^1 |x(t)|\, dt\] shows that
\[ x^*(t) = \overline{{\rm sign} x(t)}, \ \ \ \ \ a.e.\  t\in {\rm supp} (x).\]

If we take $y_n = \frac {(1+r_n)x}{\|(1+r_n)x\|_1}$ for each $n\ge
1$, then $\{y_n\}_{n=1}^{\infty}$ weakly converges to $x$ and
\begin{eqnarray*} x^*(y_n) &=
\frac{1}{\|(1+r_n)x\|_1} \int_0^1 x^*(t) (1+r_N(t))x(t)\, dt \\
& = \frac{1}{\|(1+r_n)x\|_1} \int_{{\rm supp} x}  x^*(t)
(1+r_N(t))x(t)\,
dt\\
&= \frac{1}{\|(1+r_n)x\|_1} \int_{{\rm supp} x}  (1+r_N(t))|x(t)|\,
dt = 1.
\end{eqnarray*} Therefore $(y_n, x^*)\in \Pi(X)$ so $\{|x^*(f(y_n))|\}_{n=1}^{\infty}$
converges to $|x^*(f(x))|=v(f)$. However
\[ \|x-y_n \|_1 \ge \|x-(1+r_n)x\|_1 - \|y_n - (1+r_n) x \|_1\] shows
that  $ \liminf_n \|x-y_n\|_1\ge 1.$ This contradicts that $f$ is a
numerical strong peak function at $(x, x^*)$.
\end{proof}

We recall the definition of {\it property ($\beta$)} introduced by
J. Lindenstrauss \cite{Li}. It generalizes the geometric behavior of
the standard biorthogonal pairs $\{(e_n, e_n^*)\}_{n=1}^\infty$ in
$c_0$ and $\ell_\infty$. A Banach space $Y$ has {\it property
($\beta$)} with constant $\rho$ $(0\le \rho \le 1)$ if there is some
constant $0\le \rho <1$ and a subset $\{(y_i ,y_i^*):i\in I\}\subset
Y\times Y^*$ satisfying
\begin{enumerate}
\item $\|y_i\| = \|y_i^*\| = y_i^*(y_i) =1$ for all $i\in I$,
\item $|y_i(y_j)|\le \rho$ for all $i,j\in I$, $i\neq j$.
\item $\|y\| = \sup\{ |y_i^*(y)| : i\in I\}$ for every $y\in Y$.
\end{enumerate}
J. Partington \cite{P} proved that every Banach space can be
equivalently renormed to have the property ($\beta$).

We say that a Banach space $Y$ with property ($\beta$) has property
${\bf Q}$ if $\|y\| = \max\{ |y_i^*(y)|:i\in I\}$ for every $y\in
Y$. Note that $c_0$ has property ${\bf Q}$.

\begin{prop}\label{prop:counter1}
Let $E$ be a complex Banach space having $(\beta)$-and ${\bf Q}$-
properties with $\rho=0$. There are no numerical peak functions in
$A_b(B_{E}:E)$.
\end{prop}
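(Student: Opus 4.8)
The plan is to make the geometry coming from the $(\beta)$- and ${\bf Q}$-properties explicit and then, assuming $f$ were a numerical peak function, to manufacture a second numerical radius attaining pair by perturbing a single ``coordinate'' of the peak point. First I would fix a system $\{(y_i,y_i^*):i\in I\}$ witnessing the two properties; as $\rho=0$ this gives $\|y_i\|=\|y_i^*\|=1$, $y_i^*(y_j)=0$ for $i\neq j$, and $\|x\|=\max_{i\in I}|y_i^*(x)|$ for all $x\in E$. Writing $x(i):=y_i^*(x)$, the map $x\mapsto(x(i))_{i\in I}$ embeds $E$ isometrically into $\ell_\infty(I)$ with range containing $c_0(I)=\overline{\vspan}\{y_i\}$; moreover $I$ must be infinite, since for finite $I$ this data forces $E\cong\ell_\infty^{\,|I|}$, which \emph{does} admit numerical peak functions by the characterization of Section~4.

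Next, suppose $f\in A_b(B_E:E)$ is a numerical peak function at $(x_0,x_0^*)\in\Pi(E)$, so $|x_0^*(f(x_0))|=v(f)$ and $(x_0,x_0^*)$ is the only pair where $v(f)$ is attained. From $1=|x_0^*(x_0)|=\|x_0^*\|\,\|x_0\|$ and $\|x_0\|=\max_i|x_0(i)|$ I would run the standard equality-case analysis: extend $x_0^*$ norm-preservingly to $\ell_\infty(I)^*$ and decompose it into an $\ell_1(I)$-part $\sigma$ and a singular part; equality then forces $|x_0(i)|=1$ on $\supp\sigma$, hence $x_0^*(y_i)=\sigma_i=0$ whenever $|x_0(i)|<1$. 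It remains to produce one index $i_0$ with $|x_0(i_0)|<1$, and this is where property ${\bf Q}$ is used: were $|x_0(i)|=1$ for \emph{every} $i$, then replacing $x_0(i_n)$ by $x_0(i_n)(1-\tfrac1n)$ along a sequence $(i_n)\subseteq I$ would give an element of $x_0+c_0(I)\subseteq E$ all of whose coordinates have modulus $<1$ yet with supremum norm $1$ not attained, contradicting ${\bf Q}$. (When $I$ is countable this is conclusive; the coordinate-extreme case possible for uncountable $I$ would need a separate treatment.)

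Fix such an $i_0$, set $c:=x_0(i_0)$ (so $|c|<1$ and $x_0^*(y_{i_0})=0$), and look at the affine disc $\mu\in\overline{\mathbb{D}}\mapsto x_\mu:=x_0+(\mu-c)y_{i_0}\in E$, i.e.\ $x_0$ with its $i_0$-th coordinate replaced by $\mu$. Since the norm of $x_0$ is attained at a coordinate $\neq i_0$, we get $\|x_\mu\|=\max\bigl(|\mu|,\sup_{i\neq i_0}|x_0(i)|\bigr)=1$ for $|\mu|\le1$, and $x_0^*(x_\mu)=x_0^*(x_0)+(\mu-c)x_0^*(y_{i_0})=1$; hence $(x_\mu,x_0^*)\in\Pi(E)$ for \emph{every} $\mu\in\overline{\mathbb{D}}$, so $g(\mu):=x_0^*(f(x_\mu))$ obeys $|g|\le v(f)$ on $\overline{\mathbb{D}}$ while $|g(c)|=v(f)$ with $|c|<1$. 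The crucial point is that $g$ is holomorphic on $\mathbb{D}$ even though the disc $\{x_\mu\}$ sits on the unit sphere: $\{x_\mu:\mu\in\overline{\mathbb{D}}\}$ is compact, $f$ is uniformly continuous there, so $g_r(\mu):=x_0^*(f(rx_\mu))$ --- holomorphic near $\overline{\mathbb{D}}$ because $rx_\mu\in B_E^\circ$ --- tends to $g$ uniformly on $\overline{\mathbb{D}}$ as $r\uparrow1$, and a uniform limit of holomorphic functions is holomorphic. By the maximum modulus principle $g$ is constant of modulus $v(f)$; consequently, for any $\mu$ with $|\mu|=1$ the pair $(x_\mu,x_0^*)\in\Pi(E)$ is distinct from $(x_0,x_0^*)$ (their $i_0$-th coordinates differ) and attains $v(f)$, contradicting uniqueness of the numerical peak point.

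The main obstacle is this last point of rigor: justifying the maximum modulus argument when the disc $\mu\mapsto x_\mu$ lies in $S_E$ rather than in $B_E^\circ$ --- the compactness-plus-uniform-continuity passage to the limit $r\uparrow1$ is what makes $g$ holomorphic. The second delicate step is guaranteeing the coordinate $i_0$, i.e.\ excluding a peak point that is extreme in every coordinate, which is precisely what property ${\bf Q}$ buys.
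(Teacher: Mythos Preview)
Your argument is correct (modulo the uncountable-$I$ case you yourself flag) but follows a genuinely different path from the paper's. The paper immediately reduces to $E=c_0$ without justification and then proceeds in two steps: first it uses the identity $v(f)=\|f\|$ (quoted from \cite{AK}) to locate an index $n_0$ at which $\|f(x_0)\|$ is attained, and via a maximum-modulus argument on the $n_0$-th coordinate shows both that $|x_0(n_0)|=1$ and that $x_0^*$ must equal $\overline{\sign(x_0(n_0))}\,e_{n_0}^*$; second, it picks any $N$ with $|x_0(N)|<1$ (available simply because $x_0\in c_0$) and perturbs that coordinate, keeping the now-identified $x_0^*$ fixed, to manufacture a second numerical-radius-attaining pair. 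You bypass the identification of $x_0^*$ entirely: rather than invoking $v(f)=\|f\|$, you use the $L$-decomposition $\ell_\infty(I)^*=\ell_1(I)\oplus_1 c_0(I)^\perp$ applied to a Hahn--Banach extension of $x_0^*$ to read off directly that $x_0^*(y_{i_0})=0$ at any coordinate where $|x_0(i_0)|<1$, and then run the same one-coordinate perturbation with the original $x_0^*$.

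Your route is cleaner in that it avoids the external input $v(f)=\|f\|$ and works verbatim for any $E$ with countable $I$, not just $c_0$; the paper's route has the compensating advantage that in $c_0$ the existence of a small coordinate is automatic, whereas you need the auxiliary ${\bf Q}$-violating construction. You are also more careful than the paper about why the disc function $g$ is holomorphic even though the affine disc $\{x_\mu\}$ sits in $S_E$ rather than $B_E^\circ$: your $r\uparrow 1$ compactness argument is exactly the right fix, while the paper simply asserts holomorphy without comment. Finally, your observation that finite $I$ must be excluded (since $\ell_\infty^n$ \emph{does} admit numerical peak functions by the Section~4 characterization) pinpoints an issue the paper's bare reduction to $c_0$ glosses over.
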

\begin{proof}
It suffices to show the proposition when $E=c_0$. Assume that there
exists a numerical peak function $f\in A_b(B_{c_0}:c_0)$. There
exists $(x_0, x_0^*)\in \Pi(c_0)$ such that
\[v(f)=|x_0^*(f(x_0))|>|z^*(f(z))|~\mbox{for every}~(z, z^*)\in \Pi(c_0)\backslash \{(x_0,
x_0^*)\}.\] Note that $v(f)=\|f\|$ (see \cite{AK}). There exists
$n_0\in \mathbb{N}$ such that $|e_{n_0}^*(f(x_0))|=\|f(x_0)\|$.

We claim that $|e_{n_0}^*(x_0)|=1$ and $ x_0^*=\overline{{\rm
sign}(e_{n_0}^*(x_0))}e_{n_0}^*$.

Assume that $|e_{n_0}^*(x_0)|<1$. Define
\[\phi_0(\lambda)=e_{n_0}^*(f(x_0+(\lambda-e_{n_0}^*(x_0))e_{n_0}))
\ \ \ (\lambda \in \mathbb{C}, |\lambda| \le 1),\] which is a
continuous function on the closed  unit disk and holomorphic on the
open unit disk. By the Maximum Modulus Theorem, $\phi_0$ is constant
on the open unit disk. So for every $\lambda \in \mathbb{C}$ such
that $ |\lambda|=1$, we have
\[|\phi_0(\lambda)|=|\phi_0(e_{n_0}^*(x_0))| =|e_{n_0}^*(f(x_0+(\lambda-e_{n_0}^*(x_0))e_{n_0}))|
=\|f(x_0)\|=\|f\|=v(f).\] Note that
$(x_0+(\lambda-e_{n_0}^*(x_0))e_{n_0},~\overline{{\rm
sign}(\lambda)}e_{n_0}^*)\in \Pi(c_0)$. Thus
$x_0+(\lambda-e_{n_0}^*(x_0))e_{n_0}=x_0$ for every $|\lambda|=1$,
which is impossible. Thus $|e_{n_0}^*(x_0)|=1$ and
$x_0^*=\overline{{\rm sign}(e_{n_0}^*(x_0))}e_{n_0}^*$.

Choose $N\in\mathbb{N}$ such that $|e_{N}^*(x_0)|<1$. Clearly $N\neq
n_0$. Define
\[\phi_1(\lambda)=e_{n_0}^*(f(x_0+(\lambda-e_{N}^*(x_0))e_{N}))
\ \ \ (\lambda \in \mathbb{C}, |\lambda| \le 1),\] which is a
continuous function on the closed  unit disk and holomorphic on the
open unit disk. By the Maximum Modulus Theorem, $\phi_1$ is constant
on the open unit disk. For every $\lambda \in \mathbb{C}$ with $
|\lambda|=1$, we have
\[|\phi_1(\lambda)|=|\phi_1 (e_{N}^*(x_0))| =|\overline{{\rm sign}(e_{n_0}^*(x_0)}e_{n_0}^*(f(x_0+(\lambda-e_{N}^*(x_0))e_{N}))|
=v(f).\] Note that $(x_0+(\lambda-e_{N}^*(x_0))e_{N},~\overline{{\rm
sign}(e_{n_0}^*(x_0))}e_{n_0}^*)\in \Pi(c_0)$. Thus
$x_0+(\lambda-e_{N}^*(x_0))e_{N}=x_0$, which is impossible.
\end{proof}

Note that Theorem 3.2(1) in [K] implies that there exist infinitely
many numerical peak functions in $A_b(B_{C(K)}:C(K))$.

\begin{prop}Let $K$ be a compact Hausdorff space and $X=C(K)$.
Suppose that $f\in A_b(B_X: X)$ is a numerical peak function at
$(x_0, x^*_0)$ in $\Pi(X)$. Then $x_0^*=\overline{{\rm
sign}(x_0(t_0))}\delta_{t_0}$ for some $t_0\in K$ and $f(x_0)\in
C(K)$ is a peak function at $t_0$.
\end{prop}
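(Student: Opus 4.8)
The plan is to exploit the concrete geometry of $\Pi(C(K))$ together with a convenient numerical boundary. Write $X^{*}=M(K)$. First I record a structural fact: for $(x,\mu)\in\Pi(C(K))$ one has, from
\[
1=\Bigl|\int_{K}x\,d\mu\Bigr|\le\int_{K}|x|\,d|\mu|\le\|x\|_{\infty}\|\mu\|=1,
\]
that $|x|=1$ holds $|\mu|$-almost everywhere (in fact $d\mu=\overline{x}\,d|\mu|$). Consequently the set
\[
\Gamma:=\{(x,\overline{\sign(x(t))}\,\delta_{t}):x\in S_{X},\ t\in K,\ |x(t)|=1\}
\]
is a numerical boundary for $A_{b}(B_{X}:X)$ (indeed for $C_{b}(B_{X}:X)$): the inclusion $\Gamma\subseteq\Pi(X)$ gives $\sup_{\Gamma}|z^{*}(f(z))|\le v(f)$, while for $(x,\mu)\in\Pi(X)$ we have $|\mu(f(x))|\le\int_{\{|x|=1\}}|f(x)|\,d|\mu|\le\max\{|f(x)(t)|:|x(t)|=1\}$, and each pair $(x,\overline{\sign(x(t))}\delta_{t})$ with $|x(t)|=1$ lies in $\Gamma$.

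For the first assertion, suppose $x_{0}^{*}$ is not a point mass. Then every element of $\Gamma$ differs from $(x_{0},x_{0}^{*})$, so the numerical peak property forces $|z^{*}(f(z))|<v(f)$ for all $(z,z^{*})\in\Gamma$. Since $\{t:|x_{0}(t)|=1\}$ is compact, has full $|x_{0}^{*}|$-measure, and $f(x_{0})\in C(K)$, the continuous function $|f(x_{0})|$ attains its maximum over this set, say at $s_{1}$, and then
\[
v(f)=|x_{0}^{*}(f(x_{0}))|\le\int_{\{|x_{0}|=1\}}|f(x_{0})|\,d|x_{0}^{*}|\le|f(x_{0})(s_{1})|<v(f),
\]
a contradiction. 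Hence $x_{0}^{*}=\overline{\sign(x_{0}(t_{0}))}\,\delta_{t_{0}}$ for a unique $t_{0}\in K$, necessarily with $|x_{0}(t_{0})|=1$, and therefore $v(f)=|f(x_{0})(t_{0})|$.

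For the second assertion I would use $v(f)=\|f\|$ on $A_{b}(B_{C(K)}:C(K))$ (see \cite{AK}, exactly as in the proof of Proposition~\ref{prop:counter1}); then $\|f(x_{0})\|=v(f)=|f(x_{0})(t_{0})|$, so $f(x_{0})$ attains its sup-norm at $t_{0}$, and only uniqueness of this maximizing point remains. Suppose $s_{0}\ne t_{0}$ and $|f(x_{0})(s_{0})|=v(f)$. If $|x_{0}(s_{0})|=1$, then $(x_{0},\overline{\sign(x_{0}(s_{0}))}\delta_{s_{0}})\in\Pi(X)\setminus\{(x_{0},x_{0}^{*})\}$ attains $v(f)$, contradicting the peak property. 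If $|x_{0}(s_{0})|<1$, fix an open $U\ni s_{0}$ with $t_{0}\notin\overline{U}$ and $\delta:=\sup_{\overline{U}}|x_{0}|<1$, and a Urysohn function $\varphi$ with $0\le\varphi\le1$, $\varphi(s_{0})=1$, $\supp\varphi\subseteq U$; then $\|x_{0}+\lambda\varphi\|\le1$ for $|\lambda|\le1-\delta$, and (approximating $x_{0}$ by $\rho x_{0}$, $\rho\uparrow1$, to stay in the open ball) $\lambda\mapsto f(x_{0}+\lambda\varphi)(s_{0})$ is holomorphic on $|\lambda|<1-\delta$, continuous up to the boundary, with modulus $\le\|f\|=|f(x_{0})(s_{0})|$; by the maximum modulus principle it is the constant $f(x_{0})(s_{0})$. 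Iterating with neighbourhoods shrinking to $s_{0}$, chosen so that $\sup|x_{0}|$ on them tends to $|x_{0}(s_{0})|$ quickly enough, yields $z_{k}\to z_{\infty}$ in norm with $\|z_{k}\|\le1$, $f(z_{k})(s_{0})=f(x_{0})(s_{0})$ and $|z_{k}(s_{0})|\to1$; hence $z_{\infty}\in S_{X}$, $|z_{\infty}(s_{0})|=1$, $|f(z_{\infty})(s_{0})|=v(f)$, so $(z_{\infty},\overline{\sign(z_{\infty}(s_{0}))}\delta_{s_{0}})\in\Pi(X)\setminus\{(x_{0},x_{0}^{*})\}$ attains the numerical radius, again contradicting the peak property. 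Thus $t_{0}$ is the unique maximizer of $f(x_{0})$, as required.

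The delicate step is this last case: a single perturbation large enough to raise $|x_{0}(s_{0})|$ to $1$ generically leaves $B_{X}$, so one cannot avoid the geometrically convergent iteration; one must also check that $x\mapsto f(x)(s_{0})$ is holomorphic in the perturbation parameter even though the perturbed points may lie on the unit sphere, whence the $\rho\uparrow1$ device. Everything else is routine bookkeeping with $\Gamma$ and the elementary structure of $\Pi(C(K))$.
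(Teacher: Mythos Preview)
Your argument for the first assertion is correct and genuinely different from the paper's. You use the measure-theoretic structure of $\Pi(C(K))$ and the numerical boundary $\Gamma$ to force $x_0^*$ to be a point mass directly; the paper instead proves a single holomorphic-disk claim (``if $|f(x_0)(t)|=\|f(x_0)\|$ then $|x_0(t)|=1$'') which simultaneously identifies $x_0^*$ and gives the uniqueness of $t_0$. Your route is arguably cleaner for the first half, but it forces you to handle the case $|x_0(s_0)|<1$ separately in the second assertion, which is where your proof becomes unnecessarily heavy.

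Your iteration in that case can be made rigorous (with the slip that at step $k$ one must control $\sup_{\overline{U_k}}|z_k|$, not $\sup_{\overline{U_k}}|x_0|$; after earlier perturbations these differ), but your claim that ``one cannot avoid the geometrically convergent iteration'' is mistaken. The paper's device is to perturb not along a bare Urysohn bump $\varphi$ but along $y(1-|x_0|)$, where $y\in B_X$ satisfies $y(s_0)=1$: then $\|x_0+\lambda y(1-|x_0|)\|_\infty\le |x_0|+|\lambda|(1-|x_0|)\le 1$ on the whole closed disk $|\lambda|\le 1$, the map $\lambda\mapsto f(x_0+\lambda y(1-|x_0|))(s_0)$ attains its maximum modulus $\|f\|$ at $\lambda=0$ and is therefore constant, and one may pick $\lambda_0\in S_{\mathbb{C}}$ with $|x_0(s_0)+\lambda_0(1-|x_0(s_0)|)|=1$ to reach in \emph{one} step a point $w$ with $|w(s_0)|=1$ and $|f(w)(s_0)|=v(f)$. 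This single disk replaces your whole iteration and dispenses with the convergence bookkeeping. The $\rho\uparrow 1$ justification you mention is still needed (and is equally implicit in the paper), since $\|x_0\|=1$ keeps the slice on the sphere; but that is routine.
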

\begin{proof}
Notice that $v(g) = \|g\|$ for every $g\in A_b(B_X: X)$
\cite[Theorem~2.8]{AK}. Because $f$ is a numerical peak function at
$(x_0, x_0^*)$ we have $v(f) = \|f\| = |x_0^*f(x_0)|$. Since
$\|x_0^*\|\le 1$, we have $\|x_0^*f(x_0)\| = \|f(x_0)\|=
|\delta_{t_0}f(x_0)|$ for some $t_0\in K$.

We claim that if $\|f(x_0)\|= |\delta_{t}f(x_0)|$, then
$|x_0(t)|=1$. Otherwise we have $|x_0(t)|<1$. Choose $y\in B_X$ such
that $y(t)=1$ and define a function
\[ \varphi(\lambda) = \delta_{t} f(x_0 + \lambda y (1-|x_0|)).\]
Then $\varphi$ is a holomorphic on the open unit disc in the complex
plane and continuous on the closed unit disc. Notice also that
$|\varphi(0)| = |\delta_{t}f(x_0)| = \|f\|$. By the maximum modulus
theorem, $\varphi$ is a constant. Choose $\lambda_0$ in
$S_\mathbb{C}$ satisfying $|x_0(t) + \lambda_0 (1-|x_0(t)|)|=1$.
Then $\varphi(\lambda_0) = |\delta_{t} f(x_0 + \lambda_0 (1-
|x_0|))| = \varphi(0) = v(f)$. Since $(x_0+ \lambda_0 (1-|x_0|),~
\overline{{\rm sign}(x_0+ \lambda_0 (1-|x_0|))}\delta_{t})$ is in
$\Pi(X)$, we get $x_0^* = \overline{{\rm sign}(x_0+ \lambda_0
(1-|x_0|))}\delta_{t}$. Now $1= |x_0^*(x_0)| = |x_0(t)|$. This is a
contradiction.

Notice that $(x_0, {\rm sign}(x_0(t_0))\delta_{t_0})$ is in $\Pi(X)$
and $v(f) = |\delta_{t_0} f(x_0)|$ shows that $x_0^* ={\rm
sign}(x_0(t_0)) \delta_{t_0}$ since $f$ is a numerical peak
function. Finally, if there is $s\in K$ such that $\|f(x_0)\| =
|\delta_s f(x_0)|$, then by claim, $(x_0, \overline{{\rm
sign}{x_0(s)}} \delta_s)\in \Pi(X)$ and $|\delta_s f(x_0)| = v(f)$.
So we get $t=s$. Hence $f(x_0)$ is a peak function at $t_0$.
\end{proof}

\begin{rem}
E. Bishop showed \cite{B} that there is a compact Hausdorff space
$K$ such that $C(K)$ has no peak functions. In that case, there is
no numerical peak function in $A_b(C(K): C(K))$.
\end{rem}

Recall that a $x$ in the unit ball  $B_X$ of a complex Banach space
$X$ is said to be a {\it complex extreme point} if whenever
$\sup_{0\le \theta\le 2\pi}  \| x+e^{i\theta} y\|\le 1$ for some
$y\in X$, we get $y=0$. It is easy to see that every extreme point
of $B_X$ is a complex extreme point of $B_X$. It is observed by
Globevnik \cite{G1} that if $f\in A_b(B_X)$ is a strong peak
function at $x_0$, then $x_0$ is a complex extreme point of $B_X$.
Otherwise, there is a nonzero $w\in X$ such that $\|x_0+\lambda
w\|\le 1$ for every $\lambda\in B_\mathbb{C}$. Hence the function
$\varphi(\lambda) = f( x_0+ \lambda w)$ is holomorphic on the
interior of $B_\mathbb{C}$ and continuous on $B_\mathbb{C}$. Notice
also that $\|\varphi\| = |\varphi(0)| = \|f\|$. By the strong
maximum modulus theorem, $\varphi$ is constant on $B_\mathbb{C}$.
Hence $\|f\| = |f(x_0)|= |\varphi(0)| = |\varphi(1)| = |f(x_0+w)|$.
So $|f(x_0)\| = |f(x_0+w)|$ and $x_0 = x_0 +w$ because $f$ is a peak
function at $x_0$. Hence $w=0$, which is a contradiction to the
assumption $w\neq 0$.

We denote by ${\rm ext}_\mathbb{C}(B_X)$ the set of all complex
extreme points of $B_X$ and by ${\rm ext}(B_X)$ the set of all
extreme points of $B_X$. Notice that an element $f\in {\rm
ext}(B_{C(K)})$ on a compact Hausdorff space $K$ and only if $|f|=1$
on $K$. So it is easy to check that $f\in B_{C(K)}$ is a complex
extreme point of $B_{C(K)}$ if and only if $f$ is a complex extreme
point of $B_{C(K)}$.

\begin{thm}Let $K$ be a compact Hausdorff space such that the
closed unit ball of $C(K)^*$ is the closed convex hull of
$extB_{C(K)^*}$. Then $f$ is a numerical peak function in
$A_b(B_{C(K)}:C(K))$ if and only if there exist unique $x_0\in
extB_{C(K)}$ and $t_0\in K$ such that

(a) $v(f)=\|f\|=\|f(x_0)\|>\|f(x)\|$ for every $x\in B_{C(K)}$ with
$x\neq x_0$;

(b) $v(f)=\|f\|=|\delta_{t_0}(f(x_0))|>|\delta_{t}(f(x_0))|$ for
every $t\in K$ with $t\neq t_0$.
\end{thm}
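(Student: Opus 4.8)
The plan is to prove both implications, with essentially all the work in the ``only if'' direction, and within it in one sharpening of the Proposition just above (the one identifying the numerical peak functional for $C(K)$). Throughout I will use that $v(g)=\|g\|$ for every $g\in A_b(B_{C(K)}:C(K))$ (see \cite{AK}), so ``$f$ attains its numerical radius at $(x,x^*)$'' just means $|x^*(f(x))|=\|f\|$. For the ``if'' direction: given $x_0\in\ext B_{C(K)}$ and $t_0$ satisfying (a) and (b), note $|x_0(t_0)|=1$, put $x_0^*:=\overline{x_0(t_0)}\delta_{t_0}\in S_{C(K)^*}$, so $(x_0,x_0^*)\in\Pi(C(K))$ and $|x_0^*(f(x_0))|=|f(x_0)(t_0)|=\|f\|=v(f)$, i.e.\ $f$ attains its numerical radius there. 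If $(x_1,x_1^*)\in\Pi(C(K))$ also attains $v(f)$, then $\|f(x_1)\|\ge v(f)=\|f\|$ forces $x_1=x_0$ by (a); then $|x_1^*(f(x_0))|=\|f(x_0)\|$ with $\|x_1^*\|\le 1$, and, writing $x_1^*$ as a measure via Riesz, the chain $\|f(x_0)\|=\bigl|\int f(x_0)\,dx_1^*\bigr|\le\int|f(x_0)|\,d|x_1^*|\le\|f(x_0)\|$ forces $|x_1^*|$ to be carried by $\{t:|f(x_0)(t)|=\|f(x_0)\|\}$, which by (b) equals $\{t_0\}$; hence $x_1^*=c\delta_{t_0}$, and $c\,x_0(t_0)=x_1^*(x_0)=1$ gives $c=\overline{x_0(t_0)}$, so $(x_1,x_1^*)=(x_0,x_0^*)$ and $f$ is a numerical peak function.

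For the ``only if'' direction, suppose $f$ is a numerical peak function at $(x_0,x_0^*)$. The preceding Proposition gives $t_0$ with $x_0^*=\overline{x_0(t_0)}\delta_{t_0}$, $|x_0(t_0)|=1$, and $f(x_0)$ a peak function at $t_0$ in $C(K)$; combined with $\|f\|=v(f)=|f(x_0)(t_0)|=\|f(x_0)\|$ this is exactly property (b), and the uniqueness of $t_0$ is immediate from (b). Everything else will be reduced to the following \emph{Claim}: if $x_1\in B_{C(K)}$ satisfies $\|f(x_1)\|=\|f\|$ and $|f(x_1)(t_1)|=\|f(x_1)\|$ for some $t_1$, then $|x_1(t_1)|=1$. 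Granting the Claim, $(x_1,\overline{x_1(t_1)}\delta_{t_1})\in\Pi(C(K))$ attains $v(f)$, hence equals $(x_0,x_0^*)$ by uniqueness of the numerical peak point; so $x_1=x_0$, which is the norm part of (a) (and the uniqueness of $x_0$). Finally, $x_0\in\ext B_{C(K)}$: the scalar function $g:=\delta_{t_0}\circ f\in A_b(B_{C(K)})$ has $|g(x_0)|=\|f\|=\|g\|$ and, by (a), attains $\|g\|$ only at $x_0$, so it is a scalar peak function at $x_0$; the one-line argument behind Globevnik's observation then applies (if $x_0$ were not a complex extreme point, pick $w\neq 0$ with $\|x_0+\lambda w\|\le 1$ for all $|\lambda|\le 1$; $\lambda\mapsto g(x_0+\lambda w)$ is holomorphic with modulus $\le\|g\|$ and of modulus $\|g\|$ at the interior point $0$, hence constant, so $|g(x_0+w)|=\|g\|$ and $x_0+w=x_0$, a contradiction), showing $x_0$ is a complex extreme point of $B_{C(K)}$, which for $C(K)$ means $|x_0|\equiv 1$.

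It remains to prove the Claim, and this is the step I expect to be the main obstacle. The idea is the maximum–modulus argument of the preceding Proposition, now run ``at $x_1$''. Assume $|x_1(t_1)|<1$. For $y\in B_{C(K)}$ with $y(t_1)=1$, the disc $\lambda\mapsto f\bigl(x_1+\lambda y(1-|x_1|)\bigr)(t_1)$ is holomorphic on the open disc, continuous on the closed one, and has modulus $\|f\|=v(f)$ at the interior point $\lambda=0$, hence is constant of that modulus. Choosing $\lambda_0\in S_{\C}$ with $|x_1(t_1)+\lambda_0(1-|x_1(t_1)|)|=1$ (so $\lambda_0=\sign(x_1(t_1))$ if $x_1(t_1)\neq 0$, any unit scalar otherwise) produces $z:=x_1+\lambda_0 y(1-|x_1|)$ with $|z(t_1)|=1$ at which $f$ again attains $v(f)$; hence $(z,\overline{z(t_1)}\delta_{t_1})=(x_0,\overline{x_0(t_0)}\delta_{t_0})$, which forces $t_1=t_0$ and $z=x_0$. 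If $x_1(t_0)=0$, taking $\lambda_0=1$ and $\lambda_0=i$ would make $z(t_0)=\lambda_0$ take two values, both equal to $x_0(t_0)$ — impossible. If $x_1(t_0)\neq 0$, then $\lambda_0=\sign(x_1(t_0))$ is forced and $x_1+\sign(x_1(t_0))\,y\,(1-|x_1|)=x_0$ for \emph{every} admissible $y$; letting $y$ vary (Urysohn separation at a point $s\neq t_0$) forces $|x_1|\equiv 1$ off $t_0$, and then continuity of $x_0$ together with $|x_1(t_0)|<1$ forces $t_0$ to be isolated. Decomposing $C(K)=\C\oplus_\infty C(K\setminus\{t_0\})$ and applying the maximum modulus principle to $\mu\mapsto f(\mu,x_1')(t_0)$ (where $x_1=(x_1(t_0),x_1')$, $|x_1'|\equiv 1$) shows this function is constant, so $f$ would attain $v(f)$ at $\bigl((-\sign(x_1(t_0)),x_1'),\,-\overline{x_0(t_0)}\delta_{t_0}\bigr)\in\Pi(C(K))$, whose functional differs from $x_0^*$ — contradicting uniqueness. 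Hence $|x_1(t_1)|=1$. The delicate part is precisely this bookkeeping of the degenerate subcases (the vanishing coordinate, and the ``isolated point'' reduction), each time extracting a second numerical radius attaining pair and invoking uniqueness of the numerical peak point.
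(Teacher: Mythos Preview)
Your proof is correct and follows essentially the same maximum-modulus-plus-uniqueness strategy as the paper for the ``only if'' direction; your Claim is the paper's ``$|a(s)|=1$'' step, and the case analysis is just organized differently (you split on whether $x_1(t_0)=0$ and reduce the nonzero case to the isolated-point situation, whereas the paper splits directly on whether the point is isolated and, in the non-isolated case, derives the contradiction from two different bump functions $\varphi,\psi$ without the intermediate reduction). The one genuinely different ingredient is your ``if'' direction: the paper uses the stated hypothesis $B_{C(K)^*}=\overline{co}(\ext B_{C(K)^*})$ to write $y^*=\sum_n\lambda_n\delta_{t_n}$ as a norm-convergent series and then argues coordinatewise, while you go through the Riesz representation and read off that the carrier of $|x_1^*|$ must lie in $\{t:|f(x_0)(t)|=\|f(x_0)\|\}=\{t_0\}$. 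Your argument is cleaner and, notably, makes no use of the hypothesis on $B_{C(K)^*}$ at all, so you have in fact proved the characterization for an arbitrary compact Hausdorff $K$.
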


\begin{proof} Let $X= C(K)$. Notice that $v(g) = \|g\|$ for every $g\in A_b(B_{X}: X)$

($\Rightarrow$): Suppose that $f$ is a numerical peak function for
$A_b(B_{X}:X)$. Then there exists a unique $(x_0,x_0^*)\in\Pi(X)$
such that
\[|x_0^*(f(x_0))|=v(f)=\|f\|.\]

claim: $f$ is a norm peak function at $x_0$.

Suppose that $\|f(a)\|=\|f\|$ for some $a\in B_X$. Then there is
$s\in K$ such that $|f(a)(s)|=\|f\|$. We shall show that $|a(s)|=1$.
Suppose on the contrary that $|a(s)|<1$. In case that $s$ is an
isolated point, consider the function $\varphi:B_\mathbb{C}\to
\mathbb{C}$ defined by $\varphi(z) = f(a- a(s)\chi_{\{s\}} +
z\chi_{\{s\}})(s)$, where $\chi_A$ is a characteristic function on
$A$. Then $\varphi\in A(B_\mathbb{C})$ and attains its maximum at
the interior point $a(s)$ of $B_\mathbb{C}$. So the maximum modulus
theorem shows that $\varphi$ is a constant function. Notice that
\begin{align*}\|f\|=|\varphi(1)|&=|f(a- a(s)\chi_{\{s\}} + 1\chi_{\{s\}})(s)|
\\&= |f(a- a(s)\chi_{\{s\}}
-1\chi_{\{s\}})(s)|=|\varphi(-1)|.\end{align*} This means that
\[\|f\|=|\delta_{s}(f(a- a(s)\chi_{\{s\}} + \chi_{\{s\}}))| =
|-\delta_{s}(f(a- a(s)\chi_{\{s\}} -1\chi_{\{s\}}))|.\] Since $(a-
a(s)\chi_{\{s\}} + \chi_{\{s\}}, \delta_{s})$ and $(a-
a(s)\chi_{\{s\}} - \chi_{\{s\}}, -\delta_{s})$ are two different
elements in $\Pi(X)$, it is a contradiction to that $f$ is a
numerical peak function.

For the other case, suppose that $s$ is not an isolated point. So
the set $A:=\{ t\in K: |a(t)|<1\}$ contains at least one different
point other than $s$. So we can choose two $\varphi, \psi\in C(K)$
such that $\varphi(s)=\psi(s)=1$, $\varphi|_A\neq \psi|_A$ and
$|\varphi|\le 1$, $|\psi|\le 1$. Now choose a complex number $z_0\in
S_\mathbb{C}$ such that $|a(s) + z_0(1-|a(s)|)|=1$ and let
$w_1:=a(s)+ z_0(1-|a(s)|)$.

Then if we consider the function $z\mapsto f(a(\cdot)+
z_0\varphi(\cdot)(1-|a(\cdot)|))(s)$, it belongs to
$A(B_\mathbb{C})$ and attains its maximum $\|f\|$ at 0. Hence it is
a constant function. So $\|f\|=|\overline{w_1}\delta_s(f(a(\cdot)+
z_0\varphi(\cdot)(1-|a(\cdot)|)))|$ and $(a(\cdot)+
z_0\varphi(\cdot)(1-|a(\cdot)|), \overline{w_1}\delta_s)$ is in
$\Pi(X)$. Hence $x_0=a(\cdot)+ z_0\varphi(\cdot)(1-|a(\cdot)|)$
because $f$ is a numerical peak function at $(x_0, x_0^*)$.
Similarly, we have $x_0=a(\cdot)+ z_0\psi(\cdot)(1-|a(\cdot)|)$.
Then $\varphi(t) = \psi(t)$ if $|a(t)|<1$. This is a contradiction
to that $\varphi|_A\neq \psi|_A$.

Therefore we show that $|a(s)|=1$. Then $(a,
\overline{a(s)}\delta_s)\in \Pi(X)$ and $\|f\|= |f(a)(s)|$ shows
that $a=x_0$. This proves that $f$ is a peak function at $x_0$. Then
$x_0$ is a complex extreme point of $B_X$.  Thus $x_0\in {\rm
ext}(B_{X})={\rm ext}_\mathbb{C}(B_X)$.

Since $f(x_0)\in C(K),$ there is $t_0\in K$ such that
\[|{\rm sign}(x_0(t_0))\delta_{t_0}(f(x_0))|=|f(x_0)(t_0)|=v(f)=\|f\|=\|f(x_0)\|.\]

claim: $v(f)=\|f\|=|\delta_{t_0}(f(x_0))|>|\delta_{t}(f(x_0))|$ for
every $t\in K$ with $t\neq t_0$.

Since $x_0\in ext B_X$, $|x_0(s)|=1$ for every $s\in K$. Hence if
$t\in K$ and $t\neq t_0$, then $(x_0, {\rm sign}(x_0(t))
\delta_t)\in \Pi(X)$. Notice that  $f$ is a numerical peak function
at $(x_0, {\rm sign}(x_0(t_0))\delta_{t_0})$. Hence
\[ v(f)=\|f\|=|\delta_{t_0}(f(x_0))|>|\delta_{t}(f(x_0))|\] because
$\delta_t \neq \delta_{t_0}$ by the Urysohn Lemma.

($\Leftarrow$): Let $x_0\in extB_{X}$ and $t_0\in K$ satisfying the
conditions (a) and (b).

claim: $f$ is a numerical peak function at $(x_0, \overline{{\rm
sign}(x_0(t_0))}\delta_{t_0})$

Let *$(y, y^*)\in \Pi(X)$ be such that $|y^*(f(y))|=v(f)=\|f\|.$
Since $\|f\|=\|f(y)\|\geq |y^*(f(y))|=v(f)=\|f\|,$ by condition (a),
we have $y=x_0$. The hypothesis
$B_{X^*}=\overline{co}(ext_{B_{X^*}})$ implies that there exist
sequences of nonzero complex numbers $\{\lambda_n\}$ with
$\sum_{n=1}^{\infty}{|\lambda_n|}\leq 1$ and $\{t_n\}$ in $K$ such
that $y^*=\sum_{n=1}^{\infty}{\lambda_n\delta_{t_n}}$. We claim that
$t_n=t_0$ for every $n$. Indeed, assume that $t_{n_0}\neq t_0$ for
every $n_0$. It follows that
\begin{eqnarray*}
\|f\|&=&|y^*(f(x_0))|\\
&\leq&\sum_{n=1}^{\infty}{|\lambda_n| ~|f(x_0)(t_n)|}\\
&=&|\lambda_{n_0}| ~|\delta_{t_{n_0}}(f(x_0))|+\sum_{n\neq n_0}{|\lambda_n|~\delta_{t_{n}}(f(x_0))|}\\
&<&|\lambda_{n_0}| ~\|f\|+\sum_{n\neq n_0}{|\lambda_n| ~|f(x_0)(t_n)|}~~(\mbox{by condition (b)})\\
&\leq &\sum_{n=1}^{\infty}{|\lambda_n|}\|f\|=\|f\|,
\end{eqnarray*}
which is a contradiction. Thus,
$y^*=\sum_{n=1}^{\infty}{\lambda_n}\delta_{t_0}$. Since $(x_0,
y^*)\in \Pi(X)$, we have $y^*=\overline{{\rm
sign}(x_0(t_0))}\delta_{t_0}$, so we have shown that $f$ is a
numerical peak function at $(x_0, \overline{{\rm
sign}(x_0(t_0))}\delta_{t_0}).$
\end{proof}

\begin{rem}
Let $K$ be a completely regular Hausdorff space. The $C_b(K)$ is
isometrically isomorphic with $C(\beta K)$, where $\beta K$ is the
Stone-\u{C}ech compactification of $K$. Therefore $f$ is a numerical
peak function in $A_b(C_b(K):C_b(K))$ if and only if there exist
unique $x_0\in extB_{C_b(K)}$ and $x_0^*\in {\rm ext} B_{C_b(K)^*}$
such that
\begin{enumerate}
\item[(a)] $v(f)=\|f\|=\|f(x_0)\|>\|f(x)\|$ for every $x\in B_{C_b(K)}$
with $x\neq x_0$;

\item[(b)] $v(f)=\|f\|=|x_0^*(f(x_0))|>|y^*(f(x_0))|$ for every $y^*\in
{\rm ext} B_{C_b(K)^*}$ with $y^*\neq x_0^*$.
\end{enumerate}
\end{rem}

\begin{rem}
In general, it is not true that if $f$ is a peak function for
$A_b(B_{C(K)}:{C(K)})$, then $f$ is a numerical peak function.

{\em Indeed, $K$ is finite with more than two elements. Then $C(K)
=\ell_\infty^n$ for some $n> 1$. Hence there is a peak function
$h\in A_u(B_{C(K)})$ at $x_0$ with $\|h\|=1$ since $C(K)$ is finite
dimensional. Given two distinct point $t_0$ and $t_1$ in $K$, choose
a function $g\in C(K)$ such that $\|g\|=1 = g(t_0) = g(t_1)$. Hence
$x_0\in ext B_{C(K)}$ and $|x_0|=1$ on $K$. Hence if we define $f:
B_{C(K)}\to C(K)$ by $f(x): = h(x)g$ for each $x\in B_{C(K)}$, then
$\|f\|= \|f(x_0)\|=|h(x_0)|> |h(x)|=\|f(x)\|$ for any $x\in
B_{C(K)}$ with $x\neq x_0$. Hence $f$ is a peak function on
$A_u(B_{C(K)}:C(K))$. However $(x_0, {\rm
sign}(x_0(t_0))\delta_{t_0})$ and $(x_0, {\rm
sign}(x_0(t_1))\delta_{t_1})$ are in $\Pi(X)$ and it is clear that
\[ |\delta_{t_0}(f(x_0))|=1 = |\delta_{t_1}(f(x_0))|.\] Therefore $f$
is not a numerical peak function.}
\end{rem}

\begin{prop}\label{prop:nemericalpeakchar}
Suppose that $X$ is finite dimensional and $N(A_u(B_X:X))=1$. Then
$f$ is a numerical peak function in $A_u(B_X:X)$ if and only if
there exist $x_0\in {\rm ext}_\mathbb{C}(B_X)$ and $x_0^*\in {\rm
ext}(B_{X^*})$ such that
\begin{enumerate}
\item[(a)] $v(f)=\|f\|=\|f(x_0)\|>\|f(x)\|$ for every $x\in {\rm
ext}_\mathbb{C}(B_X)$ with $x\neq x_0$;

\item[(b)] $v(f)=|x_0^*(f(x_0))|>|y^*(f(x_0))|$ for every $y^*\in
B_{X^*}$ with $y^*\neq x^*_0$ and $y^*(x_0)=1$.

\item[(c)] $v(f)>|x^*_0(f(y))|$ for every $y\in B_{X}$ with $y\neq x_0$
and $|x^*_0(y)|=1$.\end{enumerate}
\end{prop}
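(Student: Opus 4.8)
The plan is to lean on two facts that $N(A_u(B_X:X))=1$ provides once $X$ is finite dimensional: first, $A_u(B_X:X)=A_b(B_X:X)$ and $v(g)=\|g\|$ for \emph{every} $g$ in this space; second, $\Pi(X)$ is compact, so every $g$ attains its numerical radius. Two elementary tools will recur. (i) The maximum modulus theorem applied to slices $\lambda\mapsto z^*(f(z+\lambda w))$: whenever $z+\overline{\mathbb{D}}w\subseteq B_X$ this is a member of the disc algebra (continuous on $\overline{\mathbb{D}}$, holomorphic on $\mathbb{D}$), which one checks by approximating $f$ uniformly by its values on balls $(1-\varepsilon)B_X$, exactly as elsewhere in the paper. (ii) The following lemma: a nonempty compact subset $M\subseteq B_X$ that is \emph{disc-saturated} (if $z\in M$ and $z+\overline{\mathbb{D}}w\subseteq B_X$ then $z+\overline{\mathbb{D}}w\subseteq M$) contains a complex extreme point of $B_X$; indeed, by Milman's theorem every extreme point of $\overline{\rm co}(M)$ lies in $M$, and such a point $\hat x$ cannot admit a nonzero disc direction $w$, since $z+\overline{\mathbb{D}}w\subseteq M\subseteq\overline{\rm co}(M)$ would put $\hat x$ at the midpoint of the segment $[\hat x-w,\hat x+w]\subseteq\overline{\rm co}(M)$.

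For the forward implication, let $f$ be a numerical peak function at $(x_0,x_0^*)\in\Pi(X)$. From $N=1$ and $v(f)=|x_0^*(f(x_0))|\le\|f(x_0)\|\le\|f\|$ one gets $v(f)=\|f\|=\|f(x_0)\|=|x_0^*(f(x_0))|$. If $x_0\notin{\rm ext}_{\mathbb C}(B_X)$, pick $w\neq 0$ with $x_0+\overline{\mathbb{D}}w\subseteq B_X$; then $|1+\lambda x_0^*(w)|\le\|x_0+\lambda w\|\le 1$ for all $|\lambda|\le 1$ forces $x_0^*(w)=0$, so $(x_0+w,x_0^*)\in\Pi(X)$, while (i) makes $\lambda\mapsto x_0^*(f(x_0+\lambda w))$ constant, giving $|x_0^*(f(x_0+w))|=v(f)$ and contradicting uniqueness of the peak point. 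A similar argument (no maximum modulus needed) gives $x_0^*\in{\rm ext}(B_{X^*})$: writing $x_0^*=\tfrac12(u^*+v^*)$ forces $u^*(x_0)=v^*(x_0)=1$ and, by the triangle inequality inside $|x_0^*(f(x_0))|=v(f)$, also $|u^*(f(x_0))|=|v^*(f(x_0))|=v(f)$, so $(x_0,u^*),(x_0,v^*)\in\Pi(X)$ attain the numerical radius and must both equal $(x_0,x_0^*)$. Conditions (b) and (c) are then immediate from uniqueness: if $y^*(x_0)=1$ and $y^*\neq x_0^*$ then $(x_0,y^*)\in\Pi(X)\setminus\{(x_0,x_0^*)\}$, so $|y^*(f(x_0))|<v(f)$; if $|x_0^*(y)|=1$ and $y\neq x_0$ then, with $\mu=x_0^*(y)$, $(y,\bar\mu x_0^*)\in\Pi(X)\setminus\{(x_0,x_0^*)\}$, so $|x_0^*(f(y))|<v(f)$. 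The one substantial point is (a): suppose $x\in{\rm ext}_{\mathbb C}(B_X)$, $x\neq x_0$, and $\|f(x)\|=\|f\|$; put $v:=f(x)$ and choose a scalar peak function $P\in A_u(B_X)$ peaking at $x$ (which exists at any complex extreme point of a finite dimensional ball). Then $g:=P(\cdot)\,v\in A_u(B_X:X)$ has $\|g\|=\|P\|\,\|v\|=\|f\|$, so $v(g)=\|f\|$ by $N=1$; any $(z,z^*)\in\Pi(X)$ with $|z^*(g(z))|=\|f\|$ satisfies $|P(z)|=1$, hence $z=x$, and $|z^*(f(x))|=\|v\|=v(f)$ — contradicting uniqueness once more. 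Thus $\|f(x)\|<\|f\|$.

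For the converse, assume (a)--(c) with $x_0\in{\rm ext}_{\mathbb C}(B_X)$, $x_0^*\in{\rm ext}(B_{X^*})$ and $(x_0,x_0^*)\in\Pi(X)$ (the intended reading, so in particular $x_0^*(x_0)=1$). By (b), $v(f)=|x_0^*(f(x_0))|$, so $f$ attains its numerical radius at $(x_0,x_0^*)$; it remains to prove uniqueness. Let $(y,y^*)\in\Pi(X)$ with $|y^*(f(y))|=v(f)$. Since $v(f)=\|f\|\ge\|f(y)\|\ge|y^*(f(y))|=v(f)$, we have $\|f(y)\|=\|f\|$, and $y^*$ norms $f(y)$ with $y^*(y)=1$. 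Consider $M:=\{z\in B_X: y^*(z)=1,\ |y^*(f(z))|=v(f)\}$. It contains $y$, is compact, and is disc-saturated: for $z\in M$ and $z+\overline{\mathbb{D}}w\subseteq B_X$, tool (i) makes $\lambda\mapsto y^*(f(z+\lambda w))$ constant (its value at $0$ has modulus $v(f)=\|f\|\ge$ the sup over $\overline{\mathbb{D}}$), and $|1+\lambda y^*(w)|\le 1$ forces $y^*(w)=0$, so the whole disc lies in $M$. By the lemma, $M$ contains a complex extreme point $\hat x$ of $B_X$; since $\hat x\in M$ forces $\|f(\hat x)\|=\|f\|$, condition (a) gives $\hat x=x_0$, hence $x_0\in M$, i.e.\ $y^*(x_0)=1$ and $|y^*(f(x_0))|=v(f)$, so (b) yields $y^*=x_0^*$. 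Finally $(y,x_0^*)\in\Pi(X)$ gives $|x_0^*(y)|=1$, and $|x_0^*(f(y))|=v(f)$ together with (c) forces $y=x_0$. Therefore $(y,y^*)=(x_0,x_0^*)$, completing the proof.

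The main obstacle is condition (a) in the forward direction — equivalently, supplying the input that every complex extreme point of a finite dimensional ball carries a scalar peak function in $A_u(B_X)$, which is what lets one ``read off'' a norm maximum of $f$ inside $\Pi(X)$ via the rank-one perturbation $g=P\cdot f(x)$. Everything else reduces, through the maximum modulus theorem, to the compactness of $\Pi(X)$, the identity $v=\|\cdot\|$ coming from $N=1$, and the uniqueness of the numerical peak point; the only routine care needed is the disc-algebra regularity of boundary slices of $f$ and the convention $(x_0,x_0^*)\in\Pi(X)$ in the hypotheses of the converse.
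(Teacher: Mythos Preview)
Your argument is largely sound and, in several places, more self-contained than the paper's, but the gap you flag in forward~(a) is genuine and is exactly where your route diverges from the paper's. The paper never needs a scalar peak function at an arbitrary complex extreme point. Instead it repeatedly invokes an external result, Corollary~2.10 of \cite{Lee4}, whose content here is: when $N(A_u(B_X:X))=1$ (and $X$ is finite dimensional), one has $|x^*(x)|=1$ for every $x^*\in\mathrm{ext}(B_{X^*})$ and every $x\in\mathrm{ext}_\mathbb{C}(B_X)$. With this in hand, forward~(a) is immediate: given $x_1\in\mathrm{ext}_\mathbb{C}(B_X)$ with $x_1\neq x_0$, pick $y^*\in\mathrm{ext}(B_{X^*})$ norming $f(x_1)$ (the norming face is extremal, hence meets $\mathrm{ext}(B_{X^*})$); then $|y^*(x_1)|=1$ by the cited corollary, so $(x_1,\overline{\mathrm{sign}\,y^*(x_1)}\,y^*)\in\Pi(X)$ and the numerical-peak uniqueness forces $\|f(x_1)\|=|y^*(f(x_1))|<v(f)$. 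Your rank-one perturbation $g=P\cdot f(x)$ is an elegant idea, but it trades one unproved ingredient for another: the claim that every complex extreme point of a finite-dimensional ball is a peak point for $A_u(B_X)$ is not established in the paper and is not a one-line fact. The paper's route sidesteps it entirely.

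Your converse is genuinely different from the paper's and arguably nicer. The paper proceeds by Bishop's theorem (to pull the maximum of $t^*\!\circ f$ back to a peak point, hence a complex extreme point) together with another appeal to Corollary~2.10 (to get $|t^*(x_0)|=1$). Your disc-saturated set $M=\{z:\,y^*(z)=1,\ |y^*(f(z))|=v(f)\}$ plus the Milman-type lemma produces a complex extreme point inside $M$ directly from the maximum modulus theorem, with no external input beyond $v=\|\cdot\|$. This is more elementary; the cost is that it does not obviously generalize beyond situations where one already controls $y^*$ on the whole set, whereas the paper's Bishop/extremality machinery is what lets them run the same argument symmetrically in both variables. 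Your forward proofs that $x_0\in\mathrm{ext}_\mathbb{C}(B_X)$ and $x_0^*\in\mathrm{ext}(B_{X^*})$ are correct and again more direct than the paper's extremal-face-plus-Bishop argument. One small wording point: in the converse you correctly read $(x_0,x_0^*)\in\Pi(X)$ into the hypotheses; the paper in fact deduces $x_0^*(x_0)=1$ rather than assuming it, first getting $|x_0^*(x_0)|=1$ from Corollary~2.10 and then using (b) to rule out a nontrivial phase.
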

\begin{proof}
Suppose that $f$ is a numerical peak function in $A_u(B_X:X)$ at
$(x_0, x^*_0)$. So $\|f\|=v(f) =|x^*_0(f(x_0))|=\alpha
x^*_0(f(x_0))$ for some $\alpha \in S_\mathbb{C}$.

Then the set $T=\{ x^*\in B_{X^*}: x^*(x_0)=1, \ \alpha
x^*(f(x_0))=\|f\|\}$ is a nonempty weak-$*$ compact subset of
$B_{X^*}$. Hence $T$ has an extreme point $y^*$. Since $T$ is an
extremal subset of $B_{X^*}$, $y^*$ is an extreme point of
$B_{X^*}$. Let $\varphi(x):= y^*(f(x))$ be the function in
$A_u(B_X)$. Then by the Bishop theorem \cite{B},
\[\|\varphi\|= \max_{x\in \rho A_u(B_X)}
|\varphi(x)|,\] where $\rho A_u(B_X)$ is the set of all peak points
of $A_u(B_X)$. So there is $x_1\in \rho A_u(B_X)$ such that
$\|f\|=\|\varphi\|=|\varphi(x_1)|=|y^*(f(x_1))|$. Hence $x_1\in {\rm
ext}_\mathbb{C}(B_X)$.

Since $N(A_u(B_X:X))=1$, $|y^*(x_1)|=1$ by Corollary~2.10 in
\cite{Lee4}. Notice that $|y^*(f(x_1))|=\|f\|=v(f)$. Since $(x_1,
\overline{{\rm sign}(y^*(x_1))}y^*)\in \Pi(X)$ and $f$ is a
numerical peak function, we get  $x_1 = x_0$ and $x_0^* =
\overline{{\rm sign}(y^*(x_1))}y^*$. Hence $x_0\in {\rm
ext}_\mathbb{C}(B_X)$ and $x_0^*\in {\rm ext}(B_{X^*})$. Since $f$
is a numerical peak function at $(x_0, x_0^*)$, both (b) and (c)
hold clearly.

Fix $x_1\in {\rm ext}_\mathbb{C}(B_X)$ with $x_1\neq x_0$. Then the
set $S=\{ x^*\in B_{X^*}: x^*f(x_1)= \|f(x_1)\|\}$ is a nonempty
weak-$*$ compact subset of $B_{X^*}$ and an extremal subset of
$B_{X^*}$. Hence there is $y^*\in {\rm ext}(B_{X^*})$ such that
$y^*(f(x_1))=\|f(x_1)\|$. Hence $(x_1, \overline{{\rm
sign}(y^*(x_1))}y^*)\in \Pi(X)$ and
$\|f\|=\|f(x_0)\|=|x_0^*(f(x_0))|=v(f) > |y^*(f(x_1))|=\|f(x_1)\|$
because $f$ is a numerical peak function at $(x_0, x_0^*)$. This
shows that (a) holds.

Conversely, suppose that  $f$ in $A_u(B_X:X)$ satisfies both (a) and
(b). By Corollary~2.10 in \cite{Lee4}, $(x_0,
\overline{x_0^*(x_0)}x_0^*)\in \Pi(X)$ and $v(f) =|x^*_0(f(x_0))|$.
By (b), $x^*_0(x_0)=1$. Suppose that $(w,w^*)\in \Pi(X)$ such that
$v(f) = |w^*(f(w))|$. Choose $\gamma\in S_\mathbb{C}$ such that
$|w^*(f(w))|=\gamma w^*(f(w))$. Then $v(f) = \|f\| = \|f(w)\|$. So
if we let
\[R=\{x^*\in B_{X^*}: \gamma x^*(f(w)) = \|f(w)\|, \ x^*(w)=1\},\]
then $R$ is a nonempty weak-$*$ compact subset of $B_{X^*}$ and an
extremal subset of $B_{X^*}$. Hence there is $t^*\in {\rm
ext}(B_{X^*})$ such that $\gamma t^*(f(w))=\|f(w)\|$ and $t^*(w)=1$.
If we consider the function $\psi(x) = t^*(f(x))$ on $B_X$, $\psi\in
A_u(B_X)$. By the Bishop theorem again, there is $t\in {\rm
ext}_\mathbb{C}(B_X)$ such that $\|\psi\|=|\psi(t)|=|t^*(f(t))|\ge
|\psi(w)|=\|f(w)\|$. So $\|f(x_0)\|=v(f)= \|f(w)\|\le \|f(t)\|$.
Hence $t=x_0$ by (a). Then $|t^*(f(x_0))|=\|f\|$. By Corollary~2.10
in \cite{Lee4}, $|t^*(x_0)|=1$. So $(x_0, \overline{{\rm
sign}(t^*(x_0))}t^*)\in \Pi(X)$. Then (b) shows that $\overline{{\rm
sign}(t^*(x_0))}t^* = x_0^*$. Both $|x_0^*(w)|=1$ and (c) imply that
$w=x_0$. Then by (b), $w^*=t^*=x^*$. This shows that $f$ is a
numerical peak function at $(x_0, x_0^*)$.
\end{proof}

\begin{prop}Let $\Omega$ be a locally compact
Hausdorff space with more than 2 elements. Then there are no
numerical peak functions in $A_b(B_{C_0(\Omega)}:C_0(\Omega))$.
\end{prop}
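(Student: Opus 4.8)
The plan is to argue by contradiction. Suppose $f\in A_b(B_X:X)$, $X=C_0(\Omega)$, is a numerical peak function at $(x_0,x_0^*)\in\Pi(X)$. I would first record, as in \cite[Theorem~2.8]{AK} (whose proof for $c_0$ and $C(K)$ carries over verbatim to $C_0(\Omega)$ via Bishop--Phelps and the maximum modulus theorem), that $v(g)=\|g\|$ for every $g\in A_b(B_X:X)$; in particular $v(f)=\|f\|=|x_0^*(f(x_0))|\leq\|x_0^*\|\,\|f(x_0)\|\leq\|f\|$, so all of these are equalities and $\|f(x_0)\|=\|f\|$. Fix $t_0\in\Omega$ with $|f(x_0)(t_0)|=\|f(x_0)\|$.

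Next I would establish the perturbation lemma already used for compact $K$ earlier in the paper, now with compactly supported bumps: whenever $a\in B_X$ and $s\in\Omega$ satisfy $|f(a)(s)|=\|f\|$, then $|a(s)|=1$. Indeed, if $|a(s)|<1$, choose a compactly supported $\chi\in C_0(\Omega)$ with $\chi(s)=1$ and $\|\chi\|_\infty\leq1$, so that $\|a+\lambda\,\chi\,(1-|a|)\|\leq1$ for all $|\lambda|\leq1$; then $\lambda\mapsto\delta_s(f(a+\lambda\chi(1-|a|)))$ attains its maximum modulus $\|f\|$ at the interior point $0$, hence is constant, and choosing $\lambda_0$ with $|a(s)+\lambda_0(1-|a(s)|)|=1$ produces a point $(b,\overline{\sign(b(s))}\,\delta_s)\in\Pi(X)$, with $b=a+\lambda_0\chi(1-|a|)$, at which $v(f)$ is attained. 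Taking $\chi=\chi_{\{s\}}$ and $\lambda_0=\pm1$ when $s$ is isolated, or two continuous $\chi$'s that agree at $s$ but differ on the open set $\{t:|a(t)|<1\}$ when $s$ is not isolated, gives two distinct such points, contradicting that $f$ is a numerical peak function. So $|a(s)|=1$. Applying this with $a=x_0$ and $s=t_0$ shows $(x_0,\overline{\sign(x_0(t_0))}\,\delta_{t_0})\in\Pi(X)$ attains $v(f)$, hence equals $(x_0,x_0^*)$; applying it to any $a$ with $\|f(a)\|=\|f\|$ forces $a=x_0$. Thus $f$ is a norm peak function at $x_0$ and $x_0^*=\overline{\sign(x_0(t_0))}\,\delta_{t_0}$.

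Now Globevnik's observation recalled in the text applies to the scalar map $\lambda\mapsto\delta_{t_0}(f(x_0+\lambda w))$: a norm peak function of $A_b(B_X)$ can only peak at a complex extreme point of $B_X$, so $x_0\in\ext_{\mathbb C}(B_{C_0(\Omega)})$. But a complex extreme point of $B_{C_0(\Omega)}$ must satisfy $|x_0(t)|=1$ for every $t\in\Omega$ (at a point where $|x_0|<1$ one again slips a compactly supported bump past the norm bound), and since $x_0$ vanishes at infinity this is impossible once $\Omega$ is non-compact; this disposes of the non-compact case, $\Omega=\mathbb N$ (i.e.\ $c_0$) included. The remaining, and in my view principal, difficulty is the compact case: there $C_0(\Omega)=C(\Omega)$, $|x_0|\equiv1$, $x_0^*=\overline{\sign(x_0(t_0))}\,\delta_{t_0}$, and $f(x_0)\in C(\Omega)$ is a norm peak function at $t_0$, and one must use that $\Omega$ has more than two points to produce a second element of $\Pi(X)$ at which $v(f)$ is attained --- feeding this data into the earlier description of numerical peak functions on $A_b(B_{C(K)}:C(K))$, separating $t_0$ from the remaining points of $\Omega$ by Urysohn's lemma, and rotating $x_0^*$ onto the other point masses along the unimodular values of $x_0$. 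I expect getting this last step right --- and, with it, pinning down exactly which compact $\Omega$ the statement excludes --- to be where the real work lies; the two reductions above and the non-compact conclusion are routine adaptations of arguments already present in the paper.
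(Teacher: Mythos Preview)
For the non-compact case your argument is correct and parallels the paper's, though the paper is more economical. After showing $|x_0(t_0)|=1$ and $x_0^*=\overline{\sign(x_0(t_0))}\,\delta_{t_0}$ by a \emph{single} perturbation (assume $|x_0(t_0)|<1$, slide to $z_0$ with $|z_0(t_0)|=1$; then $z_0\neq x_0$ already contradicts the numerical-peak property---no second bump is needed), the paper simply uses $x_0\in C_0(\Omega)$ to find $t_1$ with $|x_0(t_1)|<1$, perturbs along a bump $y_1$ with $y_1(t_1)=1$ while still evaluating at $t_0$, and lands on $(z_1,\overline{\sign(z_1(t_0))}\,\delta_{t_0})\in\Pi(X)$ with $z_1\neq x_0$ realizing $v(f)$. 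Your detour through the general perturbation lemma, the norm-peak conclusion, and complex extreme points reaches the same endpoint but is more than the proof requires; the isolated/non-isolated split you import from the $C(K)$ argument is likewise superfluous here.

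Your instinct about the compact case is correct, and sharper than you realize: the statement is \emph{false} for compact $\Omega$, so the ``real work'' you anticipate cannot be done. The paper's own proof silently assumes non-compactness at the step ``Since $x_0\in C_0(\Omega)$, there exists $t_1\in\Omega$ such that $|x_0(t_1)|<1$'', which fails once $|x_0|\equiv 1$ is allowed. And the paper itself exhibits numerical peak functions in $A_b(B_{C(K)}:C(K))$ via the characterization theorem earlier in this section, while Corollary~\ref{cor:finitecor} shows they are even dense in $A_b(B_{\ell_\infty^n}:\ell_\infty^n)$. The hypothesis ``with more than $2$ elements'' thus appears to be a mis-statement of ``non-compact'' (it plays no role in the paper's argument either); you should not attempt the compact case.
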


\begin{proof} Otherwise. There exists a numerical peak function
$f\in A_b(B_{C_0(\Omega)}:C_0(\Omega))$. There exists $(x_0,
x_0^*)\in \Pi(C_0(\Omega))$ such that
\[v(f)=|x_0^*(f(x_0))|>|z^*(f(z))|~\mbox{for every}~(z, z^*)\in \Pi(C_0(\Omega))\backslash \{(x_0,
x_0^*)\}.\] Note that $v(f)=\|f\|$. There exists $t_0\in \Omega$
such that $|\delta_{t_0}(f(x_0))|=\|f(x_0)\|$.

claim: $|x_0(t_0)|=1$ and $x^*_0=\overline{{\rm
sign}(x_0(t_0))}\delta_{t_0}$

Assume that $|x_0(t_0)|<1$. Let $y_0\in B_{C(K)}$ such that
$y_0(t_0)=1$.

Define
\[\psi_0(\lambda)=\delta_{t_0}(f(x_0+\lambda y_0(1-|x_0|)))~~ (\lambda \in \mathbb{C}, |\lambda| \leq 1),\] which is a
continuous function on the closed  unit disk and holomorphic on the
open unit disk. By the Maximum Modulus Theorem, $\psi_0\equiv
\psi_0(0)$ on the open unit disk. Choose $\lambda_0 \in \mathbb{C}$
such that $|\lambda_0|=1$ and
$|x_0(t_0)+\lambda_0(1-|x_0(t_0)|)|=1$. Let $z_0:=x_0+\lambda_0
y_0(1-|x_0|))\in B_{C_0(\Omega)}$. Note that $(z_0,~\overline{{\rm
sign}(z_0(t_0))}\delta_{t_0})\in \Pi(C_0(\Omega))$ and
\[|\overline{{\rm sign}(z_0(t_0))}\delta_{t_0}(f(z_0))|=\psi_0(\lambda_0)=v(f).\]
We must have $z_0=x_0$, which is impossible. Thus $|x_0(t_0)|=1$ and
$x^*_0=\overline{{\rm sign}(x_0(t_0))}\delta_{t_0}$.

Since $x_0\in C_0(\Omega)$, there exists $t_1\in \Omega$ such that
$|x_0(t_1)|<1$. Clearly $t_0\neq t_1$. Let $y_1\in B_{C_0(\Omega)}$
such that $y_1(t_1)=1$.

Define
\[\psi_1(\lambda)=\delta_{t_0}(f(x_0+\lambda y_1(1-|x_0|)))~~ (\lambda \in \mathbb{C}, |\lambda| \leq 1),\] which is a
continuous function on the closed  unit disk and holomorphic on the
open unit disk. By the Maximum Modulus Theorem, $\psi_1\equiv
\psi_1(0)$ on the open unit disk. Choose $\lambda_1 \in \mathbb{C}$
such that $|\lambda_1|=1$ and
$|x_0(t_1)+\lambda_1(1-|x_0(t_1)|)|=1$. Let $z_1:=x_0+\lambda_1
y_1(1-|x_0|))\in B_{C_0(\Omega)}$. Note that $(z_1,~\overline{{\rm
sign}(z_1(t_0))}\delta_{t_0})\in \Pi(C_0(\Omega))$ and
\[|\overline{{\rm sign}(z_1(t_0))}\delta_{t_0}(f(z_0))|=\psi_1(\lambda_1)=v(f).\]
We must have $z_1=x_0$, which is a contradiction because
$z_1(t_1)\neq x_0(t_1).$
\end{proof}

\begin{prop}Let $K$ be an infinite compact Hausdorff
space. Then there are no numerical strong peak functions in
$A_b(B_{C(K)}:C(K))$.
\end{prop}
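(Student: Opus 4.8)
The plan is to assume toward a contradiction that some $f\in A_b(B_{C(K)}:C(K))$ is a numerical strong peak function at a pair $(x_0,x_0^*)\in\Pi(C(K))$, and to violate the ``strong'' requirement using that $K$ is infinite. Since $v(g)=\|g\|$ for every $g\in A_b(B_{C(K)}:C(K))$ (see \cite{AK}) and $f$ is in particular a numerical peak function, the preceding proposition on numerical peak functions in $A_b(B_{C(K)}:C(K))$ gives $x_0^*=\overline{\sign(x_0(t_0))}\,\delta_{t_0}$ for some $t_0\in K$ with $|x_0(t_0)|=1$, and $f(x_0)\in C(K)$ is a (norm) peak function at $t_0$. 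I would first sharpen this exactly along the lines of the proofs of the $C_0(\Omega)$ and $\ell_\infty^n$ cases: if $\|f(a)\|=\|f\|$, pick $s$ with $|f(a)(s)|=\|f\|$; when $|a(s)|<1$ the maximum modulus theorem applied to $\lambda\mapsto f\bigl(a+\lambda y(1-|a|)\bigr)(s)$ (with $y\in B_{C(K)}$, $y(s)=1$) produces, for a suitable unimodular $\lambda_0$, a point $z_0$ with $|z_0(s)|=1$ and $|f(z_0)(s)|=v(f)$, whence $z_0=x_0$ by uniqueness of the numerical peak point; varying $y$ (or using $\chi_{\{s\}}$ when $s$ is isolated) forces $a=x_0$. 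Hence $f$ is also a \emph{norm} peak function at $x_0$, so by the argument of Globevnik \cite{G1} (subharmonicity of $\lambda\mapsto\|f(x_0+\lambda w)\|$) $x_0$ is a complex extreme point of $B_{C(K)}$, i.e.\ $|x_0|\equiv1$ on $K$; in particular $(x_0,\overline{x_0(t)}\,\delta_t)\in\Pi(C(K))$ for every $t\in K$.

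\textbf{Step 2 (reduction to a scalar strong peak function).}
Put $g:=\delta_{t_0}\circ f\in A_b(B_{C(K)})$. Then $\|g\|\le\|f\|=v(f)$ and $|g(x_0)|=|f(x_0)(t_0)|=\|f(x_0)\|=v(f)$, so $\|g\|=v(f)=|g(x_0)|$, and the disc/uniqueness argument of Step 1 shows $|g(x)|=\|g\|\Rightarrow x=x_0$, so $g$ peaks at $x_0$. Using the numerical strong peak property of $f$ I would then upgrade this to: $g$ is a norm \emph{strong} peak function at $x_0$ in $A_b(B_{C(K)})$ — given $x_n$ with $|g(x_n)|\to\|g\|$, one attaches partners $x_n^*\in\Pi(C(K))$ (after a small holomorphic-disc correction making the relevant coordinate unimodular, of the form $\overline{x_n(s_n)}\delta_{s_n}$ with $s_n$ forced near $t_0$, or more generally measures, as in the proof that any near-numerical-peak net has first coordinate tending to $x_0$) with $|x_n^*(f(x_n))|\to v(f)$, hence $x_n\to x_0$ in norm. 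Thus the problem is reduced to showing that $B_{C(K)}$ admits no norm strong peak function of $A_b(B_{C(K)})$ at a complex-extreme point $x_0$ (with $|x_0|\equiv1$) when $K$ is infinite.

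\textbf{Step 3 (the contradiction from $K$ infinite) and the main obstacle.}
Since $K$ is an infinite compact Hausdorff space, choose pairwise disjoint nonempty open sets $U_1,U_2,\dots$ (avoiding $t_0$) and Urysohn functions $b_n$ with $0\le b_n\le1$, $\supp b_n\subset U_n$ and $\|b_n\|=b_n(t_n)=1$ for some $t_n\in U_n$. The vectors $x_n:=x_0(1-2b_n)$ lie on $S_{C(K)}$, satisfy $\|x_n-x_0\|=2$, and coincide with $x_0$ off $U_n$; moreover the holomorphic discs $\zeta\mapsto x_0\bigl((1-b_n)+\zeta b_n\bigr)$ stay in $B_{C(K)}$ and join $x_0$ (at $\zeta=1$) to $x_n$ (at $\zeta=-1$). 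Applying the maximum modulus theorem to $\zeta\mapsto g\bigl(x_0((1-b_n)+\zeta b_n)\bigr)$, together with the facts that $g$ peaks only at $x_0$ and $f(x_0)$ peaks only at $t_0$, I would conclude that the near-peak sets $\{x\in B_{C(K)}:|g(x)|\ge\|g\|-\varepsilon\}$ cannot be contained in arbitrarily small norm-neighbourhoods of $x_0$, contradicting Step 2. The hard part is precisely this last transport: one cannot argue that $x_n\to x_0$ weakly and therefore $g(x_n)\to g(x_0)$, because an element of $A_b$ need not be weakly continuous — this is exactly why the parallel $A_{wu}(B_{L_1})$ and $A_{wu}(B_{C(K)})$ statements are easier; instead the near-maximality of $|g|$ at $x_0$ must be propagated along the discs above by maximum modulus, and one must handle separately the cases in which $t_0$ is isolated, $t_0$ is non-isolated, and $t_0$ is the unique non-isolated point of $K$ (so $K$ is a one-point compactification of an infinite discrete space). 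Reconciling these three cases is where the substance of the argument lies.
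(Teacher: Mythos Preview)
The paper gives no argument here at all; it simply refers to the proof of Theorem~3.2(2) in the preprint~\cite{K1}. So there is no ``paper's own proof'' to compare with, and your attempt must be judged on its own merits.

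Step~1 is fine and follows from the earlier propositions. The real problem is Step~3. You construct $x_n = x_0(1-2b_n)$ with disjoint bumps $b_n$ supported away from $t_0$; these satisfy $\|x_n-x_0\|=2$ and $x_n(t_0)=x_0(t_0)$, so $(x_n,x_0^*)\in\Pi(C(K))$, and indeed $x_n\to x_0$ weakly. But to violate the numerical strong peak property you need $|f(x_n)(t_0)|\to v(f)$, and the tool you invoke --- the maximum modulus theorem on each disc $\zeta\mapsto g\bigl(x_0((1-b_n)+\zeta b_n)\bigr)$ --- does not deliver this. It tells you only that this one-variable holomorphic function peaks at $\zeta=1$; it places \emph{no} lower bound on its value at $\zeta=-1$ (which corresponds to $x_n$), and there is nothing preventing, say, $g(x_n)=0$ for every $n$. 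You flag this yourself as ``the hard part'' and ``where the substance of the argument lies'', list three cases, and stop --- so the proposal is an outline, not a proof. As you also note, the weak convergence $x_n\to x_0$ is useless here because $f\in A_b$ need not be weakly continuous; that observation is correct, but it is precisely why this route is blocked, not a hint toward unblocking it.

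Step~2 has a smaller, independent gap: the ``holomorphic-disc correction'' meant to replace an arbitrary near-maximizer $x_n$ by one with $|x_n(t_0)|=1$ does not work as stated, because the $\lambda$ on the boundary circle that maximizes $|g|$ is in general not one of the (at most two) $\lambda$'s that make $x_n(t_0)+\lambda(1-|x_n(t_0)|)$ unimodular. This is secondary, however: if Step~3 could be carried out for your specific sequence (which already has $|x_n(t_0)|=1$), Step~2 would be superfluous.
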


\begin{proof}
It follows from the proof of Theorem 3.2(2) in \cite{K1}.
\end{proof}



\end{document}